\newtheorem{assumption}{Assumption}
\newtheorem{lemma}{Lemma}
\newtheorem{proposition}{Proposition}
\newtheorem{theorem}{Theorem}
\newtheorem{corollary}{Corollary}
\newcommand{\N}{{\mathbb N}}
\newcommand{\Z}{{\mathbb Z}}
\newcommand{\R}{{\mathbb R}}
\newcommand{\C}{{\mathbb C}}
\newcommand{\Gjn}{{\mathcal G}_j^{\, n}}
\newcommand{\GGjn}{{\mathbb G}_j^n}
\newcommand{\Hjn}{{\mathcal H}_j^{\, n}}
\begin{document}

\title{The Green's function of the Lax-Wendroff \\
and Beam-Warming schemes}

\author{Jean-Fran\c{c}ois {\sc Coulombel}\thanks{Institut de Math\'ematiques de Toulouse - UMR 5219, Universit\'e de Toulouse ;
CNRS, Universit\'e Paul Sabatier, 118 route de Narbonne, 31062 Toulouse Cedex 9 , France. Research of J.-F. C. was supported
by ANR project Nabuco, ANR-17-CE40-0025. Email: {\tt jean-francois.coulombel@math.univ-toulouse.fr}}}
\date{\today}
\maketitle

\begin{abstract}
We prove a sharp uniform generalized Gaussian bound for the Green's function of the Lax-Wendroff and Beam-Warming schemes.
Our bound highlights the spatial region that leads to the well-known (rather weak) instability of these schemes in the maximum norm.
We also recover uniform bounds in the maximum norm when these schemes are applied to initial data of bounded variation.
\end{abstract}
\bigskip

\noindent {\small {\bf AMS classification:} 65M06, 65M12, 35L02.}

\noindent {\small {\bf Keywords:} transport equation, Lax-Wendroff scheme, Beam-Warming scheme, difference approximation, convolution,
stability, local limit theorem.}
\bigskip
\bigskip


For $1 \le q<+\infty$, we let $\ell^{\, q}(\Z;\C)$ denote the Banach space of complex valued sequences indexed by $\Z$ and such that the norm:
$$
\| \, u \, \|_{\ell^{\, q}} \, := \, \left( \, \sum_{j \in \Z} \, | \, u_j \, |^{\, q} \, \right)^{1/q} \, ,
$$
is finite. We also let $\ell^{\, \infty}(\Z;\C)$ denote the Banach space of bounded complex valued sequences indexed by $\Z$ and equipped with
the norm:
$$
\| \, u \, \|_{\ell^{\, \infty}} \, := \, \sup_{j \in \Z} \, | \, u_j \, | \, .
$$
We let $\N^*$ denote the set $\{ 1,2,3,\dots\}$ of positive integers. The letter $C$, resp. $c$, denotes some large, resp. small, positive constant
that may vary from one line to the other and possibly within the same line (for instance, we use the conventions $C+C=C$, $C/c=C$ and so on).
The dependence of the constants on the various involved parameters is made precise in the statement of the results and in the proofs.

\section{Introduction and main result}
\label{section1}

\subsection{Introduction}

The Lax-Wendroff and Beam-Warming schemes are two second order finite difference approximations of the transport equation:
\begin{equation}
\label{transport}
\begin{cases}
\partial_ t u \, + \, a \, \partial_x u \, = \, 0 \, ,& t \ge 0 \, ,\quad x \in \R \, ,\\
u|_{t=0} \, = \, u_0 \, .&
\end{cases}
\end{equation}
For simplicity, the velocity $a$ is a constant here which we choose to be positive (otherwise the Beam-Warming scheme defined in \eqref{BW} below
should be upwinded the other way round). The solution to \eqref{transport} corresponds to a transport along the characteristics which implies that any
$L^p$ norm of the initial condition $u_0$ is preserved:
$$
\forall \, t \ge 0 \, ,\quad \forall \, p \in [1,+\infty] \, ,\quad \| \, u(t,\cdot) \, \|_{L^p(\R)} \, = \, \| \, u_0 \, \|_{L^p(\R)} \, .
$$
A desirable feature of any finite difference approximation of \eqref{transport} is to satisfy a similar stability property in the discrete setting.

Finite difference approximations of the solution to \eqref{transport} amount to replacing $u$ by a piecewise constant function with respect to both space
and time. We thus introduce time and space steps $\Delta t>0$ and $\Delta x>0$ which we choose such that the ratio $\Delta t/\Delta x$ is a fixed positive
constant. From now on, we let $\lambda>0$ denote the fixed constant $a \, \Delta t/\Delta x$. The solution $u$ to \eqref{transport} is then approximated by
a sequence of functions defined as:
$$
\forall \, (n,j) \in \N \times \Z \, ,\quad \forall \, (t,x) \in [n \, \Delta t,(n+1) \, \Delta t) \times [j \, \Delta x,(j+1) \, \Delta x) \, ,\quad u_\Delta (t,x) \, := \, u_j^n \, ,
$$
where it remains to define inductively the sequence $(u_j^n)_{(n,j) \in \N \times \Z}$. For the linear transport equation \eqref{transport}, the so-called Lax-Wendroff
scheme reads:
\begin{equation}
\label{LW}
\begin{cases}
u_j^{n+1} \, = \, \dfrac{\lambda \, + \, \lambda^{\, 2}}{2} \, u_{j-1}^n \, + \, (1 \, - \, \lambda^{\, 2}) \, u_j^n \, + \, \dfrac{\lambda^{\, 2}  \, - \, \lambda}{2} \, u_{j+1}^n \, ,&
n \in \N \, ,\quad j \in \Z \, ,\\
u_j^0 \, = \, f_j \, ,& j \in \Z \, ,
\end{cases}
\end{equation}
and the Beam-Warming scheme reads\footnote{The choice $a>0$ is crucial here. For negative velocities, the stencil of the Beam-Warming should be shifted to
the points $j,j+1,j+2$.}:
\begin{equation}
\label{BW}
\begin{cases}
u_j^{n+1} \, = \, \dfrac{\lambda^{\, 2} \, - \, \lambda}{2} \, u_{j-2}^n \, + \, \lambda \, (2 \, - \, \lambda) \, u_{j-1}^n \, + \,
\dfrac{(1 \, - \, \lambda) \, (2 \, - \, \lambda)}{2} \, u_j^n \, ,& n \in \N \, ,\quad j \in \Z \, ,\\
u_j^0 \, = \, f_j \, ,& j \in \Z \, .
\end{cases}
\end{equation}
In both \eqref{LW} and \eqref{BW}, the sequence $(f_j)_{j \in \Z}$ is meant to provide with an approximation of the initial condition $u_0$ for \eqref{transport}.
A typical choice is:
$$
\forall \, j \in \Z \, ,\quad f_j \, := \, \dfrac{1}{\Delta x} \, \int_{j \, \Delta x}^{(j+1) \, \Delta x} \, u_0(x) \, {\rm d}x \, .
$$
We recall that in \eqref{LW} and \eqref{BW}, the fixed constant $\lambda$ stands for $a \, \Delta t/\Delta x$. Both schemes \eqref{LW} and \eqref{BW} are known
to lead (at least formally) to second order approximations of the solution to \eqref{transport} and to exhibit a \emph{dispersive} behavior. This phenomenon is
evidenced in Figure \ref{fig:dispersion} where we show the propagation of the step function:
$$
u_0(x) \, := \, \begin{cases}
1 \, ,& \text{\rm if } | \, x \, | \, \le 1/2 \, ,\\
0 \, ,& \text{\rm otherwise,}
\end{cases}
$$
at time $t=2$ by the numerical schemes \eqref{LW} and \eqref{BW} (we choose $a=1$ in the computations). Oscillating wavetrains are generated either behind
or ahead of the discontinuities in the solution. Our goal in this article is to give an accurate description of the so-called Green's function (a.k.a. the fundamental
solution) for the recurrence relations \eqref{LW} and \eqref{BW}. Our result gives a quantitative description of the oscillating wavetrains displayed in Figure
\ref{fig:dispersion}. Moreover it also gives a detailed analysis of the (rather weak) instability of the schemes \eqref{LW} and \eqref{BW} in $\ell^{\, \infty}(\Z;\C)$.
We now briefly recall some bibliographic references on this instability phenomenon and state our main result.

\begin{figure}\centering
\includegraphics[scale=0.25]{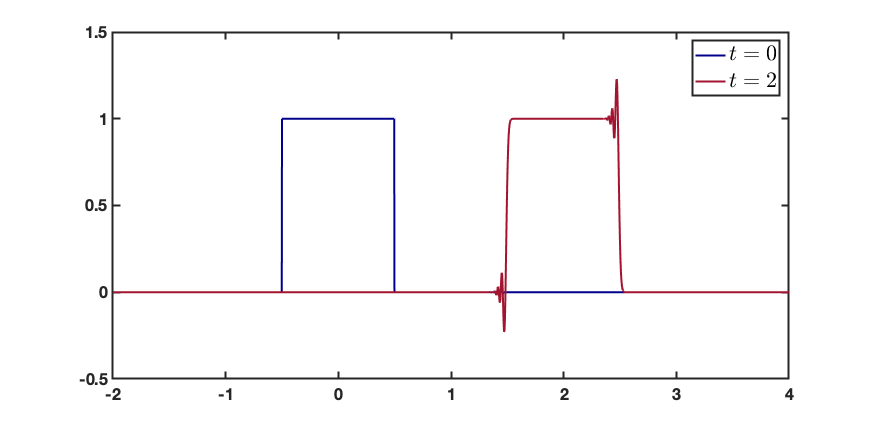}
\includegraphics[scale=0.25]{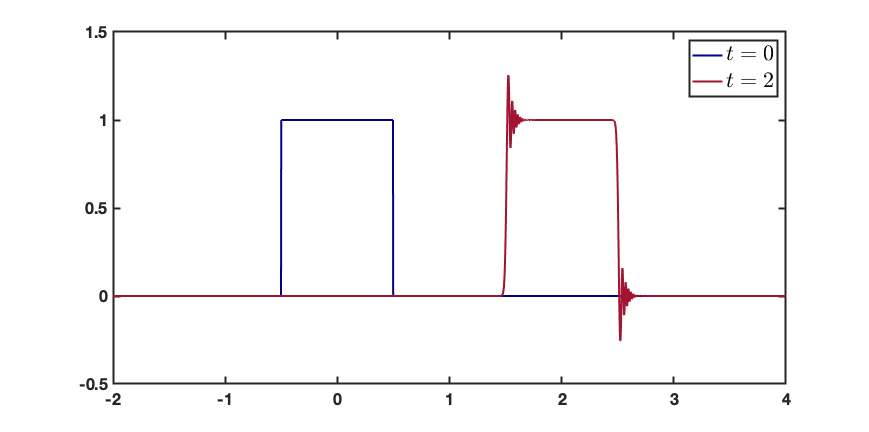}
\caption{Propagation of a step function by the Lax-Wendroff scheme \eqref{LW} (left) and Beam-Warming scheme \eqref{BW} (right) schemes with $\lambda=3/4$.
The initial condition is depicted in blue and the numerical solution at time $t=2$ is depicted in red.}
\label{fig:dispersion}
\end{figure}

\subsection{A reminder on the instability of dispersive schemes in the maximum norm}

Let us recall a few facts about Laurent (or convolution) operators. If $a \in \ell^{\, 1}(\Z;\C)$, we let $L_a$ denote the so-called Laurent operator
associated with the sequence $a$ \cite{TE,Nikolski}, which is defined by:
\begin{equation}
\label{laurent}
L_a \quad : \quad \big( u_j \big)_{j \in \Z} \, \longmapsto \, \left( \, \sum_{\ell \in \Z} \, a_\ell \, u_{j-\ell} \right)_{j \in \Z} \, = \, a \star u \, ,
\end{equation}
whenever the defining formula \eqref{laurent} for the sequence $L_a \, u$ makes sense. For instance, the scheme \eqref{LW} corresponds to the (finitely
supported) sequence:
$$
a_\ell \, := \, \begin{cases}
(\lambda \, + \, \lambda^{\, 2}) \, / \, 2 \, ,& \text{\rm if } \ell \, = \, 1 \, ,\\
1 \, - \, \lambda^{\, 2} \, ,& \text{\rm if } \ell \, = \, 0 \, ,\\
- \, (\lambda \, - \, \lambda^{\, 2}) \, / \, 2 \, ,& \text{\rm if } \ell \, = \, -1 \, ,\\
0 \, ,& \text{\rm otherwise,}
\end{cases}
$$
and the scheme \eqref{BW} corresponds to the (finitely supported) sequence:
$$
a_\ell \, := \, \begin{cases}
(1 \, - \, \lambda) \, (2 \, - \lambda) \, / \, 2 \, ,& \text{\rm if } \ell \, = \, 0 \, ,\\
\lambda \, (2 \, - \, \lambda) \, ,& \text{\rm if } \ell \, = \, 1 \, ,\\
- \, (\lambda \, - \, \lambda^{\, 2}) \, / \, 2 \, ,& \text{\rm if } \ell \, = \, 2 \, ,\\
0 \, ,& \text{\rm otherwise.}
\end{cases}
$$

We always consider a sequence $a \in \ell^{\, 1}(\Z;\C)$ in what follows, and we shall even assume for our main result that the sequence $a$ is finitely
supported. Young's inequality shows that the operator $L_a$ acts boundedly on $\ell^{\, q}(\Z;\C)$ for any $q \in [1,+\infty]$. The spectrum of $L_a$
is well-understood since the so-called L\'evy-Wiener Theorem \cite{newman} characterizes the invertible elements of $\ell^{\, 1}(\Z;\C)$ for the convolution
product (and we have the morphism property $L_a \circ L_b=L_{a \star b}$, where $\star$ denotes the convolution product on $\ell^{\, 1}(\Z;\C)$).
Namely, the spectrum of $L_a$ as an operator acting on $\ell^{\, q}(\Z;\C)$ does not depend on $q$ and is nothing but the image of the Fourier
transform of the sequence $a$, see \cite{TE}:
$$
\sigma \, (L_a) \, = \, \left\{ \, \sum_{\ell \in \Z} \, a_\ell \, {\rm e}^{\, \mathbf{i} \, \ell \, \theta} \, / \, \theta \in \R \right\} \, .
$$
Since $a$ belongs to $\ell^{\, 1}(\Z;\C)$, its Fourier transform is continuous on $\R$. It actually belongs to the so-called Wiener algebra, see \cite{kahane}.

Let us now take a closer look at the norm of $L_a$ when acting on either $\ell^{\, 2}(\Z;\C)$ or $\ell^{\, \infty}(\Z;\C)$. For future use, we let $\widehat{F}_a$
denote the Fourier transform of the sequence $a$:
\begin{equation}
\label{defFourier}
\forall \, \theta \in \R \, ,\quad \widehat{F}_a(\theta) \, := \, \sum_{\ell \in \Z} \, a_\ell \, {\rm e}^{\, \mathbf{i} \, \ell \, \theta} \, .
\end{equation}
By Fourier analysis, see \cite{gko,strikwerda-wade,TE}, we have:
$$
\| \, L_a \, \|_{\ell^{\, 2} \rightarrow \ell^{\, 2}} \, = \, \| \, \widehat{F}_a \, \|_{L^\infty(\R)} \, .
$$
Since the maps $(a \, \mapsto \, L_a)$ and $(a \, \mapsto \, \widehat{F}_a)$ are morphisms, this immediately gives:
$$
\forall \, n \in \N \, ,\quad \| \, (L_a)^{\, n} \, \|_{\ell^{\, 2} \rightarrow \ell^{\, 2}} \, = \, \| \, (\widehat{F}_a)^{\, n} \, \|_{L^\infty(\R)}
\, = \, \| \, \widehat{F}_a \, \|_{L^\infty(\R)}^{\, n} \, ,
$$
which gives the well-known von Neumann necessary and sufficient condition for $\ell^{\, 2}$-stability (see \cite{RM,gko,strikwerda-wade}):
$$
\sup_{n \in \N} \, \| \, (L_a)^{\, n} \, \|_{\ell^{\, 2} \rightarrow \ell^{\, 2}} \, < \, + \, \infty \quad \Longleftrightarrow \quad
\| \, \widehat{F}_a \, \|_{L^\infty(\R)} \, \le \, 1 \, .
$$
In the context of finite difference schemes, $\widehat{F}_a$ is usually referred to as the amplification factor. Under the von Neumann condition, the operator
$L_a$ is a contraction on $\ell^{\, 2}(\Z;\C)$ as is any of its powers. For the Lax-Wendroff \eqref{LW} and Beam-Warming \eqref{BW} schemes, we obtain (with
rather self-explanatory notation):
\begin{subequations}
\begin{align}
\forall \, \theta \in \R \, ,\quad
\widehat{F}_{\rm LW}(\theta) \, &= \, 1 \, - \, 2 \, \lambda^{\, 2} \, \sin^{\, 2} \, \dfrac{\theta}{2} \, + \, \mathbf{i} \, \lambda \, \sin \theta \, ,\label{defFLW} \\
\widehat{F}_{\rm BW}(\theta) \, &= \, 1 \, - \, 2 \, \lambda^{\, 2} \, \sin^{\, 2} \, \dfrac{\theta}{2} \,  \, - \, 4 \, \lambda \, (1 \, - \, \lambda) \, \sin^{\, 4} \, \dfrac{\theta}{2} \,
+ \, \mathbf{i} \, \lambda \, \sin \theta \, \left( 1 \, + \, 2 \, (1 \, - \, \lambda) \, \sin^{\, 2} \, \dfrac{\theta}{2} \right) \, .\label{defFBW}
\end{align}
\end{subequations}
We then compute :
\begin{align*}
\forall \, \theta \in \R \, ,\quad
\Big| \, \widehat{F}_{\rm LW}(\theta) \, \Big|^{\, 2} \, &= \, 1 \, - \, 4 \, \lambda^{\, 2} \, (1 \, - \, \lambda^{\, 2}) \, \sin^{\, 4} \, \dfrac{\theta}{2} \, , \\
\Big| \, \widehat{F}_{\rm BW}(\theta) \, \Big|^{\, 2} \, &= \, 1 \, - \, 4 \, \lambda \, (1 \, - \, \lambda)^{\, 2} \, (2-\lambda) \, \sin^{\, 4} \, \dfrac{\theta}{2} \, ,
\end{align*}
and we thus find that the Lax-Wendroff scheme is $\ell^{\, 2}$-stable for $\lambda \in (0,1]$ and that the Beam-Warming scheme is $\ell^{\, 2}$-stable for
$\lambda \in (0,2]$. In what follows, we avoid the trivial cases $\lambda=1$ in \eqref{LW} and $\lambda \in \{ 1,2 \}$ in \eqref{BW} for which the schemes
reduce to simple shift operators. We thus consider from now on :
\begin{itemize}
 \item the Lax-Wendroff scheme \eqref{LW} with $\lambda \in (0,1)$,
 \item or the Beam-Warming scheme \eqref{BW} with $\lambda \in (0,1) \cup (1,2)$.
\end{itemize}
In particular, there holds the so-called \emph{dissipation} condition:
\begin{equation}
\label{dissipation}
\forall \, \theta \in [ - \, \pi,\pi] \setminus \{ 0 \} \, ,\quad
\max \, \Big( \, \big| \, \widehat{F}_{\rm LW}(\theta) \, \big|,\big| \, \widehat{F}_{\rm BW}(\theta) \, \big| \, \Big) \, < \, 1 \, ,
\end{equation}
as well as the conservativity condition $\widehat{F}_{\rm LW}(0)=\widehat{F}_{\rm BW}(0)=1$.

The stability of the Laurent operator $L_a$ on $\ell^{\, \infty}(\Z;\C)$ is less easy to study since the norm\footnote{The norm of $L_a$ as an operator acting
on either $\ell^{\, 1}(\Z;\C)$ or $\ell^{\, \infty}(\Z;\C)$ coincides with the norm of $\widehat{F}_a$ in the Wiener algebra.}:
$$
\| \, L_a \, \|_{\ell^{\, \infty} \rightarrow \ell^{\, \infty}} \, = \, \sum_{\ell \in \Z} \, | \, a_\ell \, | \, ,
$$
is usually larger than $1$. The only favorable case is that when the coefficients $a_\ell$ are nonnegative real numbers, which corresponds to \emph{monotone}
schemes or equivalently to the case where the $a_\ell$'s are the jump probabilities of a random walk on $\Z$. There is no obvious reason why, even under a
dissipation condition as in \eqref{dissipation} for the Fourier transform $\widehat{F}_a$, the operator $L_a$ should be \emph{power bounded} when acting on
$\ell^{\, \infty}(\Z;\C)$. This stability property in the maximum norm has been thoroughly investigated in the fundamental contribution \cite{Thomee}, see also
\cite{Despres1,Diaconis-SaloffCoste} for some recent developments. The main result in \cite{Thomee} shows that stability in the maximum norm is related to
the Taylor expansion of the Fourier transform $\widehat{F}_a$ at any point where the modulus of $\widehat{F}_a$ attains the value $1$. For the Lax-Wendroff
and Beam-Warming schemes, the modulus of the Fourier transform equals $1$ only at the frequency $0$, see \eqref{dissipation}, and we compute the Taylor
expansions:
\begin{subequations}
\begin{align}
\widehat{F}_{\rm LW}(\theta) \, &= \, \exp \, \left ( \mathbf{i} \, \lambda \, \theta \, - \, \mathbf{i} \, \dfrac{\lambda \, (1 \, - \, \lambda^{\, 2})}{6} \, \theta^{\, 3}
\, - \, \dfrac{\lambda^{\, 2} \, (1 \, - \, \lambda^{\, 2})}{8} \, \theta^{\, 4} \, + \, O(\theta^{\, 5}) \right) \, ,\label{TaylorLW} \\
\widehat{F}_{\rm BW}(\theta) \, &= \, \exp \, \left ( \mathbf{i} \, \lambda \, \theta \, + \, \mathbf{i} \, \dfrac{\lambda \, (1 \, - \, \lambda) \, (2 \, - \, \lambda)}{6} \, \theta^{\, 3}
 \, - \, \dfrac{\lambda \, (1 \, - \, \lambda)^{\, 2} \, (2 \, - \lambda)}{8} \, \theta^{\, 4} \, + \, O(\theta^{\, 5}) \right) \, ,\label{TaylorBW}
\end{align}
\end{subequations}
as $\theta$ tends to $0$. Since a purely imaginary term of the form $\mathbf{i} \, \theta^{\, 3}$ arises in \eqref{TaylorLW} and \eqref{TaylorBW}
(with a power $3$ that is less than the first even power with a coefficient of negative real part), \cite[Theorem 3]{Thomee} shows that the associated
convolution operators are \emph{not} power bounded on $\ell^{\, \infty}(\Z;\C)$. In other words, the schemes \eqref{LW} and \eqref{BW} are not stable
in $\ell^{\, \infty}(\Z;\C)$ uniformly in time. Namely, there holds (with, again, quite self-explanatory notation):
$$
\sup_{n \in \N} \, \| \, (L_{\rm LW})^{\, n} \, \|_{\ell^{\, \infty} \rightarrow \ell^{\, \infty}} \, = \,
\sup_{n \in \N} \, \| \, (L_{\rm BW})^{\, n} \, \|_{\ell^{\, \infty} \rightarrow \ell^{\, \infty}} \, = \, + \, \infty \, .
$$
Actually, \cite[Theorem 3]{Thomee} gives the lower bounds:
$$
\| \, (L_{\rm LW})^{\, n} \, \|_{\ell^{\, \infty} \rightarrow \ell^{\, \infty}} \, \ge \, c \, n^{\, 1/12} \, ,\quad
\| \, (L_{\rm BW})^{\, n} \, \|_{\ell^{\, \infty} \rightarrow \ell^{\, \infty}} \, \ge \, c \, n^{\, 1/12} \, ,
$$
for a suitable constant $c>0$ (that only depends on $\lambda$). The \emph{sharp} growth rate:
$$
\| \, (L_{\rm LW})^{\, n} \, \|_{\ell^{\, \infty} \rightarrow \ell^{\, \infty}} \, \sim \, n^{\, 1/8} \, ,\quad
\| \, (L_{\rm BW})^{\, n} \, \|_{\ell^{\, \infty} \rightarrow \ell^{\, \infty}} \, \sim \, n^{\, 1/8} \, ,
$$
as $n$ tends to infinity, was proved in \cite{hedstrom1} (see also \cite{hedstrom2} for a general classification). We shall recover this growth rate later on by
making it even more precise (see Section \ref{section4} for more details).

\subsection{Main result}

From now on, we consider a convolution operator $L_a$ where $a \in \ell^{\, 1}(\Z;\C)$ is a given sequence whose associated Fourier transform
is denoted $\widehat{F}_a$, see \eqref{defFourier}. We are interested in giving an accurate description of the powers $L_a^{\, n}$, as the integer
$n$ becomes large, in a case that exhibits a \emph{dispersive} behavior as reported in Figure \ref{fig:dispersion}. We thus make the following
assumption.

\begin{assumption}
\label{hyp:1}
The complex valued sequence $a$ is finitely supported and sums to $1$:
$$
\sum_{\ell \in \Z} \, a_\ell \, = \, 1 \, .
$$
Its Fourier transform $\widehat{F}_a$ (that is a trigonometric polynomial) satisfies the dissipation condition:
\begin{equation}
\label{hyp:dissipation}
\forall \, \theta \in [ - \, \pi,\pi] \setminus \{ 0 \} \, ,\quad \big| \, \widehat{F}_a(\theta) \, \big| \, < \, 1 \, .
\end{equation}
Moreover, there exist a real number $\alpha$, a nonzero real number $c_3$, a positive real number $c_4$ and a holomorphic function $\varphi$
defined on a neighborhood of $0$ such that, as $\theta \in \C$ tends to zero, there holds:
\begin{equation}
\label{hyp:stabilite2}
\widehat{F}_a(\theta) \, = \, \exp \left( \, \mathbf{i} \, \alpha \, \theta \, - \, \mathbf{i} \, c_3 \, \theta^{\, 3} \, - \, c_4 \, \theta^{\, 4} \,
+ \, \theta^{\, 5} \, \varphi (\theta) \, \right) \, .
\end{equation}
\end{assumption}

\noindent Let us observe that since $\widehat{F}_a$ is a trigonometric polynomial, the definition \eqref{defFourier} shows that it extends to a holomorphic
function on the whole complex plane. Hence the holomorphy of the remainder $\varphi$ in \eqref{hyp:stabilite2} is automatic.

In order to state our main result, we let from now on $\boldsymbol{\delta}$ denote the discrete Dirac mass defined by:
$$
\forall \, j \in \Z \, ,\quad \boldsymbol{\delta}_j \, := \, \begin{cases}
1 \, ,& \text{\rm if } j \, = \, 0 \, ,\\
0 \, ,& \text{\rm otherwise.}
\end{cases}
$$
For any $n \in \N$ and $j \in \Z$, we then use the notation $\Gjn$ for the so-called Green's function associated with the operator $L_a$, that is:
\begin{equation}
\label{def:green}
\forall \, n \in \N \, ,\quad \forall \, j \in \Z \, ,\quad \Gjn \, := \, \big( \, L_a^{\, n} \, \boldsymbol{\delta} \, \big)_j \, .
\end{equation}
Since $L_a$ is a convolution operator, the Green's function $\mathcal{G}^{\, n}$ is nothing but the sequence $a$ convolved with itself $n-1$
times: $\mathcal{G}^{\, 0} =\boldsymbol{\delta}$, $\mathcal{G}^{\, 1} =a$, $\mathcal{G}^{\, 2} =a \star a$ and so on. Equivalently, we have:
$$
\forall \, n \in \N \, ,\quad (L_a)^{\, n} \, = \, L_{\, \mathcal{G}^{\, n}} \, .
$$
Our main result is the following.

\begin{theorem}
\label{thm:1}
Assume that the constant $c_3$ in Assumption \ref{hyp:1} is positive (which holds for the Lax-Wendroff scheme with $\lambda \in (0,1)$ and
for the Beam-Warming scheme with $\lambda \in (1,2)$). Then there exist two constants $C>0$ and $c>0$ such that the Green's function
$(\Gjn)_{(n,j) \in \N \times \Z}$ satisfies the uniform bounds:
\begin{multline}
\label{bound1}
\forall \, n \in \N^* \, ,\quad \forall \, j \in \Z \, ,\\
\big| \, \Gjn \, \big| \, \le \, \dfrac{C}{n^{\, 1/3}} \, \min \left( 1,\left( \dfrac{j \, - \, \alpha \, n}{n^{\, 1/3}} \right)^{-\, 1/4} \right) \,
\exp \left( - \, c \, \left( \dfrac{j \, - \, \alpha \, n}{n^{\, 1/3}} \right)^{3/2} \, \right) \, ,\quad \text{\rm if } j \, - \, \alpha \, n \, \ge \, 0 \, ,
\end{multline}
and:
\begin{multline}
\label{bound2}
\forall \, n \in \N^* \, ,\quad \forall \, j \in \Z \, ,\\
\big| \, \Gjn \, - \, \GGjn \, \big| \, \le \, \dfrac{C}{n^{\, 1/3}} \, \min \left( 1,\left( \dfrac{|j \, - \, \alpha \, n|}{n^{\, 1/3}} \right)^{-\, 1} \right) \,
\exp \left( - \, c \, \left( \dfrac{|j \, - \, \alpha \, n|}{n^{\, 1/3}} \right)^{3/2} \, \right) \, ,\quad \text{\rm if } j \, - \, \alpha \, n \, < \, 0 \, ,
\end{multline}
where $\GGjn$ is defined for $n \in \N^*$ and $j \in \Z$ as:
\begin{equation}
\label{defprincipal}
\GGjn \, := \, \dfrac{1}{\pi} \, \exp \left( - \, \dfrac{c_4 \, (j \, - \, \alpha \, n)^{\, 2}}{9 \, c_3^{\, 2} \, n} \, \right) \,
\cos \left( \dfrac{2 \, |j \, - \, \alpha \, n|^{\, 3/2}}{3 \, \sqrt{3 \, c_3 \, n}} \, - \, \dfrac{\pi}{4} \right) \,
\int_{-\sqrt{\frac{2 \, |j \, - \, \alpha \, n|}{3\, c_3 \, n}}}^{\sqrt{\frac{2 \, |j \, - \, \alpha \, n|}{3\, c_3 \, n}}} \,
{\rm e}^{\, - \, \sqrt{3 \, c_3 \, n \, |j \, - \, \alpha \, n|} \, u^{\, 2}} \, {\rm d}u \, .
\end{equation}

If $c_3$ is negative (which holds for the Beam-Warming scheme with $\lambda \in (0,1)$), the bounds depending on the sign of $j \, - \, \alpha \, n$
should be switched and $c_3$ should be replaced by $| \, c_3 \, |$ in \eqref{defprincipal}.
\end{theorem}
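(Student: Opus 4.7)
The starting point is Fourier inversion, $\Gjn = \tfrac{1}{2\pi}\int_{-\pi}^{\pi} \widehat{F}_a(\theta)^{\, n}\, e^{-\mathbf{i}j\theta}\,\md\theta$. Fix a small cutoff $\eta>0$ so that the Taylor expansion \eqref{hyp:stabilite2} holds on $|\theta|\le\eta$; the dissipation condition \eqref{hyp:dissipation} together with the continuity of $\widehat{F}_a$ then yields $|\widehat{F}_a(\theta)|\le r<1$ on $\eta\le|\theta|\le\pi$, so that the high-frequency contribution is $O(r^{\, n})$ and is absorbed into the right-hand sides of \eqref{bound1}--\eqref{bound2}. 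All the work thus concentrates on the low-frequency piece, where the integrand can be written as $\exp(\Psi_n(\theta))$ with
$$\Psi_n(\theta) \, := \, -\mathbf{i}\,(j-\alpha n)\,\theta \, - \, \mathbf{i}\,c_3\,n\,\theta^{\, 3} \, - \, c_4\,n\,\theta^{\, 4} \, + \, n\,\theta^{\, 5}\,\varphi(\theta).$$
The cubic part of $\Psi_n$ has saddles $\theta_{\pm}$ solving $\theta_{\pm}^{\, 2} = -(j-\alpha n)/(3\,c_3\,n)$, which are purely imaginary when $c_3>0$ and $j-\alpha n\ge 0$ (the decay regime feeding \eqref{bound1}) and purely real when $c_3>0$ and $j-\alpha n<0$ (the oscillatory regime feeding \eqref{bound2}). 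The proof splits along this dichotomy.

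In the decay regime, set $\tau := \sqrt{(j-\alpha n)/(3\,c_3\,n)}$ and translate the segment $[-\eta,\eta]$ onto the horizontal line $\{\Im\theta = -\tau\}$ by Cauchy's theorem (legitimate since $\widehat{F}_a$ is entire). A direct computation gives $\Re\Psi_n(-\mathbf{i}\tau) = -\tfrac{2\,(j-\alpha n)^{3/2}}{3\sqrt{3\,c_3\,n}} - \tfrac{c_4\,(j-\alpha n)^{\, 2}}{9\,c_3^{\, 2}\,n}$, the first term being exactly $-c\,\bigl((j-\alpha n)/n^{1/3}\bigr)^{3/2}$ with $c=2/(3\sqrt{3\,c_3})$; this factors out of the integral and yields the exponential in \eqref{bound1}. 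The remaining $x$-integral on the shifted contour, after the Airy rescaling $x=\zeta/(3\,c_3\,n)^{1/3}$, reduces to a bounded perturbation of a Gaussian in $\zeta$, which produces the prefactor $n^{-1/3}\min(1,\xi^{-1/4})$ with $\xi=(j-\alpha n)/n^{1/3}$. In the oscillatory regime the saddles $\pm\tau$ are real and I deform the contour locally along the steepest descent rays of angle $\pm\pi/4$ through each of them. The quadratic fluctuation at each saddle produces the prefactor $(c_3\,n\,\tau)^{-1/2}$, the two saddles combine into the cosine with phase shift $-\pi/4$ displayed in \eqref{defprincipal}, and evaluating the quartic $-c_4\,n\,\theta^{\, 4}$ at $\pm\tau$ produces the Gaussian damping $\exp(-c_4(j-\alpha n)^{\, 2}/(9\,c_3^{\, 2}\,n))$. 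The incomplete Gaussian integral in \eqref{defprincipal} is precisely the transverse Gaussian localisation around the saddle, parametrized by the variable $u$ and truncated at the natural window of size $\sim\tau$ that separates the two saddles. This identifies $\GGjn$ as the leading-order contribution, and the remainder is governed by the quintic term $n\,\theta^{\, 5}\,\varphi(\theta)$ evaluated at the saddle, which is of size $n^{-1/3}\,|\xi|^{-1}$, i.e.\ exactly the bound \eqref{bound2}.

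The main obstacle will be uniform control across the transition window $|j-\alpha n|\lesssim n^{1/3}$, where the two cubic saddles collide at the origin and both the stationary-phase and the shifted-contour expansions degenerate. The unifying device is precisely the Airy rescaling $\theta = \zeta/(3\,c_3\,n)^{1/3}$: it puts the cubic phase into the canonical form $-\mathbf{i}\,(\xi\zeta + \zeta^{\, 3}/3)$ uniformly in $\xi$, and sends the quartic and quintic perturbations to terms of respective size $n^{-1/3}\,|\zeta|^{\, 4}$ and $n^{-2/3}\,|\zeta|^{\, 5}$, both negligible on any bounded window of $\zeta$. The rescaled integral is then uniformly $O(1)$ on the Airy core, which is exactly the content of the $\min(1,\cdot)$ clause in \eqref{bound1}--\eqref{bound2}. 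Splicing this Airy-core estimate with the two saddle-point asymptotics described above produces the stated bounds; the case $c_3<0$ follows from the substitution $\theta\mapsto-\theta$ in the Fourier integral, which swaps the two regimes and replaces $c_3$ by $|c_3|$ in all the formulas.
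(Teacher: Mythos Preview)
Your overall approach---Fourier inversion, dissipation cutoff, and contour deformation through the cubic saddles, with a horizontal shift in the decay regime and steepest-descent rays of slope $\pm\pi/4$ in the oscillatory regime---matches the paper's (Propositions~\ref{prop1}--\ref{prop5}). But your claim that the remainder in \eqref{bound2} ``is governed by the quintic term $n\theta^{5}\varphi(\theta)$ evaluated at the saddle, which is of size $n^{-1/3}|\xi|^{-1}$'' does not hold up. At $\theta\sim\tau\sim|\omega|^{1/2}$ the quintic is $n\tau^{5}\sim n|\omega|^{5/2}$, and after Gaussian integration (width $\sim(n|\omega|^{1/2})^{-1/2}$) this contributes an error of order $n^{1/2}|\omega|^{9/4}e^{-cn\omega^{2}}$, which is not $1/(n|\omega|)$. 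The factor $1/(n|\omega|)=n^{-1/3}|\xi|^{-1}$ actually arises from pieces of the contour you have not described: once the two steepest-descent segments through $\pm\tau$ are laid down, their far ends must be joined back to $\pm\eta$, and the paper does this along horizontal segments at height $\sqrt{|\omega|/(3c_3)}$ (the terms $\varepsilon_j^n(3),\varepsilon_j^n(4)$ in Figure~\ref{fig:contour3}). The Gaussian tail bound $\int_{-\infty}^{-X}e^{-ay^{2}}\,dy\le e^{-aX^{2}}/(2aX)$ applied there with $X\sim 2\tau$ is exactly what produces the prefactor $1/(n|\omega|)$ together with the $e^{-cn|\omega|^{3/2}}$ decay. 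Without specifying and estimating these connecting pieces, the bound \eqref{bound2} is not established.

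Your handling of the transition window $|\xi|\lesssim 1$ is also incomplete. After the Airy rescaling the $\zeta$-integral runs over an interval of length $\sim\eta(3c_3n)^{1/3}$, so the assertion that the quartic and quintic perturbations are ``negligible on any bounded window of $\zeta$'' does not control the tails $|\zeta|\to\infty$. The paper obtains the uniform $O(n^{-1/3})$ bound directly (without any rescaling) by combining van der Corput's lemma on the cubic phase $\omega\theta+c_3\theta^{3}$ with the estimate of Lemma~\ref{lem1}, using that the amplitude $e^{-c_4n\theta^{4}+O(n\theta^{5})}$ has uniformly bounded $L^{\infty}$ norm and $L^{1}$ derivative on $[-\delta,\delta]$ (Proposition~\ref{prop1}). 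Some device of this type is needed to justify the $\min(1,\cdot)$ clause in \eqref{bound1} and \eqref{bound2}.
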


\noindent In what follows, only the values of $\GGjn$ for $j \, - \, \alpha \, n \, < \, 0$ will matter, but there is no problem defining $\GGjn$ for any
$(n,j) \in \N^* \times \Z$. An immediate consequence of Theorem \ref{thm:1} is the following corollary.

\begin{corollary}
\label{coro:1}
Assume that the constant $c_3$ in Assumption \ref{hyp:1} is positive. Then there exists a constant $C>0$ such that for any $n \in \N^*$, the Green's function
$(\Gjn)_{j \in \Z}$ satisfies:
\begin{equation}
\label{boundcoro1-1}
\sum_{j \in \Z \, / \, j \, - \, \alpha \, n \, \ge \, 0} \, \big| \, \Gjn \, \big| \, \le \, C \, ,
\end{equation}
and
\begin{equation}
\label{boundcoro1-2}
\sum_{j  \in \Z \, / \, j \, - \, \alpha \, n \, < \, 0} \, \big| \, \Gjn \, - \, \GGjn \, \big| \, \le \, C \, ,
\end{equation}
where the approximate Green's function $\GGjn$ is defined in \eqref{defprincipal}.
\end{corollary}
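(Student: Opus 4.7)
The plan is to derive both estimates directly from the pointwise bounds of Theorem \ref{thm:1} by viewing the sums as Riemann sums for convergent improper integrals. The natural rescaling is $x_j \, := \, (j - \alpha \, n)/n^{\, 1/3}$; as $j$ ranges over $\Z$, the successive values of $x_j$ are spaced by $n^{\, - \, 1/3}$, which exactly compensates the prefactor $1/n^{\, 1/3}$ appearing in \eqref{bound1}--\eqref{bound2}. It is therefore natural to compare each rescaled sum to the corresponding integral in the $x$ variable.

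For \eqref{boundcoro1-1}, the bound \eqref{bound1} reads
\[
\big| \, \Gjn \, \big| \, \le \, \dfrac{C}{n^{\, 1/3}} \, \min \bigl( 1, x_j^{\, - \, 1/4} \bigr) \, {\rm e}^{\, - \, c \, x_j^{\, 3/2}} \, ,
\]
whenever $j - \alpha \, n \ge 0$. I would split the resulting sum at $x_j = 1$. On the region $\{0 \le x_j \le 1\}$ there are at most $n^{\, 1/3} + 1$ grid points, and each term is bounded by $C/n^{\, 1/3}$, so the total contribution is uniformly bounded. On $\{x_j > 1\}$, the integrand $x^{\, - \, 1/4} \, {\rm e}^{\, - \, c \, x^{\, 3/2}}$ is monotonically decreasing, so the Riemann sum is dominated by $C \int_1^{+\infty} x^{\, - \, 1/4} \, {\rm e}^{\, - \, c \, x^{\, 3/2}} \, {\rm d}x$, which converges thanks to the superexponential factor.

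The proof of \eqref{boundcoro1-2} is structurally identical, starting from \eqref{bound2}: one replaces $\min(1, x^{\, - \, 1/4})$ by $\min(1, x^{\, - \, 1})$ and splits the sum in the same way. The corresponding tail integral $\int_1^{+\infty} x^{\, - \, 1} \, {\rm e}^{\, - \, c \, x^{\, 3/2}} \, {\rm d}x$ converges trivially. I do not anticipate any real obstacle here: the proof is a short integral-comparison bookkeeping exercise, which is why the author calls this corollary an \emph{immediate} consequence of Theorem \ref{thm:1}. The only modest care needed is to treat the non-monotone regime $\{x_j \le 1\}$ separately, using the crude pointwise bound $\min(1, \cdot) \le 1$ rather than a monotonicity-based integral comparison.
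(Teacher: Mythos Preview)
Your proposal is correct and matches the paper's argument: rescale by $x_j=(j-\alpha n)/n^{1/3}$, use the pointwise bounds \eqref{bound1}--\eqref{bound2}, and control the resulting Riemann sum by an integral of a nonincreasing function. The only cosmetic difference is the split point: the paper observes that $f(x)=\min(1,x^{-1/4})\,{\rm e}^{-cx^{3/2}}$ is in fact nonincreasing on \emph{all} of $\R^+$ (your ``non-monotone regime'' is actually monotone, since $f(x)={\rm e}^{-cx^{3/2}}$ for $0\le x\le 1$), so it only separates off the at most one integer with $0\le j-\alpha n<1$ and compares the rest directly to $\|f\|_{L^1(\R^+)}$; your split at $x_j=1$ with a grid-point count on $\{0\le x_j\le 1\}$ works equally well.
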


In other words, the reason for the nonuniform integrability of the Green's function $(\Gjn)_{j \in \Z}$ is located in the region $\{ j \, - \, \alpha \, n \, < \, 0 \}$
(assuming $c_3>0$), and the resulting instability is entirely described by means of the approximate Green's function $(\GGjn)_{j \in \Z}$. This observation
will be used systematically in Section \ref{section4} below.

Let us clarify the position of our main result with some recent advances on \emph{local limit theorems} for complex valued sequences. More precisely,
in probability theory (the case where the $a_\ell$'s are nonnegative real numbers), local limit theorems describe the asymptotic behavior of $\Gjn$
as $n$ becomes large, see for instance \cite[Chapter VII]{Petrov}. There have been recent developments of this theory in the case of complex valued
sequences, of which a culminating point is \cite{RSC1} (see \cite{RSC2} for results on multidimensional problems). Going more into the details, when
we apply \cite[Theorem 1.2]{RSC1} to sequences $a$ satisfying Assumption \ref{hyp:1}, we obtain\footnote{The reader may also consult
\cite[Proposition 8.1]{RSC1} that deals with the specific case of the Lax-Wendroff scheme \eqref{LW}.}:
$$
\Gjn \, = \, \dfrac{1}{n^{\, 1/3}} \, H_3^{\, \mathbf{i} \, c_3} \left( \, \dfrac{j \, - \, \alpha \, n}{n^{\, 1/3}} \, \right) \, + \, o(n^{\, - \, 1/3}) \, ,
$$
uniformly with respect to $j \in \Z$, and the function $H_3^{\, \mathbf{i} \, c_3}$ is defined as the oscillatory integral\footnote{The function $H_3^{\, \mathbf{i}/3}$
is the classical Airy function, which is already a strong indication why, for $c_3>0$, $\mathcal{G}_j^{\, n}$ should have strong decay properties for
$j \, - \, \alpha \, n >0$.}:
$$
\forall \, x \in \R \, ,\quad
H_3^{\, \mathbf{i} \, c_3}(x) \, := \, \dfrac{1}{2 \, \pi} \, \int_\R \, {\rm e}^{\, - \, \mathbf{i} \, x \, u} \, {\rm e}^{\, - \, \mathbf{i} \, c_3 \, u^{\, 3}} \, {\rm d}u \, .
$$
Theorem \ref{thm:1} above does not give such an accurate (universal) description of $\Gjn$, but it gives global uniform bounds that cannot be obtained
(at least, not in a straightforward way) from \cite{RSC1}. This difference between Theorem \ref{thm:1} and \cite{RSC1} should be expected since the
analysis in \cite{RSC1} only retains the first two terms in the Taylor expansion \eqref{hyp:stabilite2} while we make here a very strong assumption on
the third term (the $O(\theta^{\, 4})$ term in \eqref{hyp:stabilite2} that produces most of the dissipation mechanism). Last, we observe that the uniform
bound in Theorem \ref{thm:1} for the case $c_3 \, (j  \, - \, \alpha \, n) \, \ge \, 0$ is compatible with the well-known fast decaying behavior of the Airy
function on $\R^+$. Theorem \ref{thm:1} and \cite{RSC1} should therefore be seen as complementary.
\bigskip

We provide in Figure \ref{fig:green} with a representation of the Green's function $(\Gjn)_{j \in \Z}$ for the Lax-Wendroff scheme \eqref{LW} for $\lambda=3/4$
at various time iterations. Entirely similar pictures may be generated for the Beam-Warming scheme \eqref{BW}. The right picture in Figure \ref{fig:green}
compares the exact Green's function $(\Gjn)_{j \in \Z}$, that is depicted in blue, with yet another approximate Green's function from the one given in Theorem
\ref{thm:1}. Namely, we plot in red the approximate Green's function $(\Hjn)_{j \in \Z}$ defined by:
\begin{equation}
\label{defHjn}
\forall (j,n) \in \Z \times \N^* \, ,\quad \Hjn \, := \, \begin{cases}
(3 \, c_3 \, n)^{\, - \, 1/3} \, \, \text{\rm Ai } \left( \dfrac{j \, - \, \alpha \, n}{(3 \, c_3 \, n)^{\, 1/3}} \right) \, ,& \text{\rm if $j \, - \, \alpha \, n \ge 0$,} \\
\dfrac{{\rm e}^{- \, \frac{c_4 \, (j \, - \, \alpha \, n)^{\, 2}}{9 \, c_3^{\, 2} \, n}}}{(3 \, c_3 \, n)^{\, 1/3}} \, \,
\text{\rm Ai } \left( \dfrac{j \, - \, \alpha \, n}{(3 \, c_3 \, n)^{\, 1/3}} \right) \, ,& \text{\rm if $j \, - \, \alpha \, n < 0$,}
\end{cases}
\end{equation}
where $\text{\rm Ai}$ denotes the Airy function. The values of $c_3$ and $c_4$ for the Lax-Wendroff scheme \eqref{LW} are:
$$
c_3 \, = \, \dfrac{\lambda \, (1 \, - \, \lambda^{\, 2})}{6} \, ,\quad c_4 \, = \, \dfrac{\lambda^{\, 2} \, (1 \, - \, \lambda^{\, 2})}{8} \, .
$$
The Gaussian like factor in the definition of $\Hjn$ fits very well with the observed damping in the oscillations of $\Gjn$. With no Gaussian factor, the very
slow decay of the Airy function on $\R^-$ would not fit with the observed fast decaying behavior of $\Gjn$. However, we have not been able so far to obtain
accurate bounds for the difference $\Gjn -\Hjn$. This is postponed to a future work.
\bigskip

\begin{figure}\centering
\includegraphics[scale=0.35]{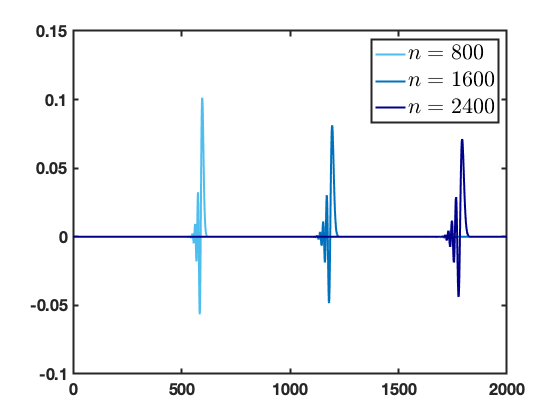}
\includegraphics[scale=0.35]{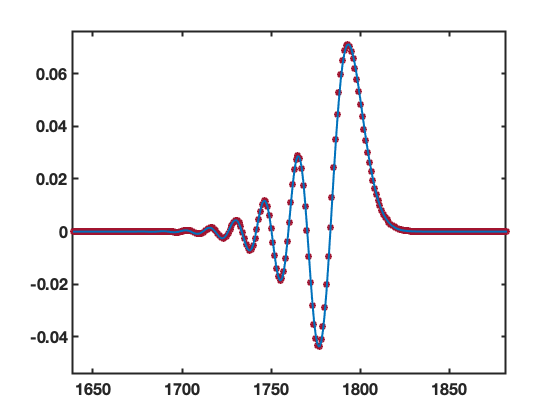}
\caption{The Green's function of the Lax-Wendroff scheme \eqref{LW} at various time iterations (left) and a zoom on the oscillations for $n=2400$
(right). The chosen parameter is $\lambda=3/4$. On the right: the Green's function $\Gjn$ in blue and the approximate Green's function $\Hjn$ defined
in \eqref{defHjn} in red dots.}
\label{fig:green}
\end{figure}

We now give the proof of Theorem \ref{thm:1} in the case $c_3>0$ and leave the case $c_3<0$ to the interested reader. The rest of this article is organized
as follows. In Section \ref{section2}, we prove the sharp bound \eqref{bound1}. We then give the proof of the bound \eqref{bound2} in Section \ref{section3}.
Corollary \ref{coro:1} as well as two other consequences of Theorem \ref{thm:1} are discussed in Section \ref{section4} in connection with \cite{hedstrom1,hedstrom2}
and \cite{ELR}.

\section{Proof of the main result. I. Uniform bound and the fast decaying tail}
\label{section2}

\subsection{Preliminary facts and notation}

First of all, let us fix a constant $\delta_0>0$ such that the function $\varphi$ in Assumption \ref{hyp:1} is holomorphic on the open square
$\{ z \in \C \, / \, \max \, ( \, |\text{\rm Re } z|,|\text{\rm Im } z| \, ) \, < \, 2 \, \delta_0 \}$. We then define the constant $C_0>0$ as:
\begin{equation}
\label{defC0}
C_0 \, := \, \max_{|\text{\rm Re } z| \le \delta_0 \, , \, |\text{\rm Im } z| \le \delta_0} \, | \, \varphi(z) \, | \, .
\end{equation}
In many arguments below, we shall use some contours that are located within the closed square $\{ z \in \C \, / \, \max \, ( \, |\text{\rm Re } z|,|\text{\rm Im } z| \, )
\, \le \, \delta_0 \}$ in order to be able to bound from above the modulus of $\varphi$ by the constant $C_0$. The terms that involve $\varphi$ will always be dealt
with as remainders. The constants $\delta_0$ and $C_0$ are fixed once and for all.

We recall the (Fourier based) formula for the Green's function $\Gjn$. This is the same starting point as in \cite{Thomee,Despres1,Diaconis-SaloffCoste,RSC1}.
By the standard properties of the Fourier transform, we have:
$$
\widehat{F}_{a \star a} (\theta) \, = \, \widehat{F}_a (\theta)^{\, 2} \, ,
$$
and, more generally, since $\mathcal{G}^{\, n}$ is the convolution of $a$ with itself $n-1$ times, we have:
$$
\forall \, n \in \N^* \, ,\quad \forall \, \theta \in \R \, ,\quad \widehat{F}_a (\theta)^n \, = \, \sum_{\ell \in \Z} \, \mathcal{G}_\ell^{\, n} \, {\rm e}^{\, \mathbf{i} \, \ell \, \theta} \, .
$$
We thus obtain the expression:
\begin{equation}
\label{formuleFourier}
\forall \, n \in \N^* \, ,\quad \forall \, j \in \Z \, ,\quad \Gjn \, = \,
\dfrac{1}{2\, \pi} \, \int_{-\pi}^\pi \, {\rm e}^{- \, \mathbf{i} \, j \, \theta} \, \widehat{F}_a (\theta)^n \, {\rm d}\theta
\, = \, \dfrac{1}{2\, \pi} \, \int_{-\pi}^\pi \, {\rm e}^{\, \mathbf{i} \, j \, \theta} \, \widehat{F}_a (- \, \theta)^n \, {\rm d}\theta \, ,
\end{equation}
where the final change of variables has been performed in order to stick as much as possible to the notation in \cite{hedstrom1}. From now, we use the notation:
\begin{equation}
\label{defomega}
\forall \, n \in \N^* \, ,\quad \forall \, j \in \Z \, ,\quad \omega \, := \, \dfrac{j \, - \, \alpha \, n}{n} \, ,
\end{equation}
where the real number $\alpha$ corresponds to the first coefficient in the Taylor expansion \eqref{hyp:stabilite2}. With this definition, we can rewrite
\eqref{formuleFourier} as:
\begin{equation}
\label{formuleGjn}
\forall \, n \in \N^* \, ,\quad \forall \, j \in \Z \, ,\quad \Gjn \, = \,
\dfrac{1}{2\, \pi} \, \int_{-\pi}^\pi \, {\rm e}^{\, \mathbf{i} \, n \, \omega \, \theta} \, \Big( {\rm e}^{\, \mathbf{i} \, \alpha \, \theta} \, \widehat{F}_a (- \, \theta) \Big)^n
\, {\rm d}\theta \, .
\end{equation}

Since the sequence $a$ is finitely supported, the sequence $\mathcal{G}^{\, n}$ is also finitely supported for any $n \in \N$. More precisely, there exists
a positive integer $M \in \N^*$ such that $\Gjn=0$ for any $j \in \Z$ satisfying $|\, j \, | \, > \, M \, n$. The integer $M$ measures the size of the support of
$a$. From now on, we thus only consider the case $|\, j \, | \, \le \, M \, n$. Up to choosing $M$ even larger, we can thus assume that the parameter
$\omega$ in \eqref{defomega} satisfies $|\, \omega \, | \, \le \, 2 \, M$.

The estimate of the Green's function $\Gjn$ is split in several steps, depending on the size of $\omega$. In some regimes, $\omega$ plays the role of
a small parameter in \eqref{formuleGjn}, while $n$ plays the role of a large parameter. In other regimes, $\omega$ is basically treated as a \emph{positive}
or \emph{negative} constant and the integer $n$ is the only (large) parameter in \eqref{formuleGjn}. Of course, our main attention below is to obtain estimates
where all constants are independent of $\omega$ and $n \in \N^*$, which is similar to being independent of $j \in \Z$ and $n \in \N^*$.

\subsection{The uniform bound}

In this paragraph, we recall the argument in \cite{RSC1} that yields a uniform $O(n^{-\, 1/3})$ bound for the Green's function. The result follows
from two lemmas on oscillatory integrals.

\begin{lemma}[Lemma 3.1 in \cite{RSC1}]
\label{lem1}
Let $a \le b$ be two real numbers, let $h \in \mathcal{C}^0([a,b];\C)$ and let $g \in \mathcal{C}^1([a,b];\C)$. Then there holds:
$$
\left| \, \int_a^b \, g(x) \, h(x) \, {\rm d}x \, \right| \, \le \, \left( \, \sup_{x \in [a,b]} \, \left| \, \int_a^x \, h(y) \, {\rm d}y \, \right| \, \right) \,
\Big( \, \| \, g \, \|_{L^\infty([a,b])} \, + \, \| \, g' \, \|_{L^1([a,b])} \, \Big) \, .
$$
\end{lemma}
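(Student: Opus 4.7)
The plan is to prove the inequality by a single integration by parts, turning the factor $h$ into its primitive (which is controlled in sup norm by the first factor on the right-hand side) and moving the derivative onto $g$ (whose $L^1$ norm appears in the second factor).

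More concretely, I first introduce the primitive
$$
H(x) \, := \, \int_a^x \, h(y) \, \md y \, ,
$$
which by assumption on $h$ belongs to $\mathcal{C}^1([a,b];\C)$ with $H'=h$ and $H(a)=0$. Integrating by parts yields
$$
\int_a^b \, g(x) \, h(x) \, \md x \, = \, g(b) \, H(b) \, - \, \int_a^b \, g'(x) \, H(x) \, \md x \, ,
$$
since the boundary term at $a$ vanishes. Taking the modulus and using the triangle inequality then gives
$$
\left| \, \int_a^b \, g(x) \, h(x) \, \md x \, \right| \, \le \, | \, g(b) \, | \, | \, H(b) \, | \, + \, \int_a^b \, | \, g'(x) \, | \, | \, H(x) \, | \, \md x \, .
$$

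To conclude, I set $M := \sup_{x \in [a,b]} |H(x)|$, which is precisely the first factor on the right-hand side of the claimed inequality. Bounding $|H(b)| \le M$ in the boundary term and pulling $M$ out of the remaining integral, I obtain
$$
\left| \, \int_a^b \, g(x) \, h(x) \, \md x \, \right| \, \le \, M \, \Big( \, | \, g(b) \, | \, + \, \| \, g' \, \|_{L^1([a,b])} \, \Big) \, \le \, M \, \Big( \, \| \, g \, \|_{L^\infty([a,b])} \, + \, \| \, g' \, \|_{L^1([a,b])} \, \Big) \, ,
$$
which is the desired bound. There is no genuine obstacle here: the argument is a one-line integration by parts followed by two elementary estimates, and the statement is really a repackaging of the classical observation that an oscillatory (or merely ``mean-zero-like'') factor $h$ can be integrated at the cost of moving a derivative onto the smooth factor $g$. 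The only mild care needed is to write the boundary term at $b$ as $|g(b)| \le \|g\|_{L^\infty}$ rather than attempting to absorb it into the $L^1$ norm of $g'$, which would require an extra assumption such as $g(a)=0$.
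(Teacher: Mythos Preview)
Your proof is correct. The paper does not actually give a proof of this lemma; it is stated with a citation to \cite{RSC1} and used as a black box in the proof of Proposition~\ref{prop1}. Your integration-by-parts argument is exactly the standard (and essentially the only natural) proof of this inequality, so there is nothing further to compare.
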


\noindent The second result is due to van der Corput.

\begin{lemma}[van der Corput]
\label{lem2}
There exists a numerical constant $C>0$ such that for any real numbers $a \le b$, for any real valued function $f \in \mathcal{C}^3([a,b];\R)$,
there holds:
$$
\left| \, \int_a^b \, {\rm e}^{\, \mathbf{i} \, f(x)} \, {\rm d}x \, \right| \, \le \, C \,
\min_{k=1,2,3} \, \dfrac{1}{\Big( \min_{x \in [a,b]} \, |\, f^{(k)}(x) \, | \, \Big)^{\, 1/k}} \, .
$$
\end{lemma}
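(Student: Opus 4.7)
The natural strategy is an induction on $k=1,2,3$, at each stage reducing the oscillatory integral to the previous case by excising a small neighborhood of the minimizer of $|f^{(k-1)}|$. The base case $k=1$ will be established through an integration-by-parts argument that dovetails nicely with Lemma \ref{lem1}, and each induction step loses only a $\delta$-dependent factor that is then balanced optimally.

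For $k=1$ (with $f'$ of constant sign and monotone on $[a,b]$, which will hold automatically when the base case is invoked inductively), I would rewrite
$$
\int_a^b {\rm e}^{\, \mathbf{i} \, f(x)} \, {\rm d}x \, = \, \int_a^b \, \dfrac{1}{\mathbf{i} \, f'(x)} \, \dfrac{\rm d}{{\rm d}x} \Big( \, {\rm e}^{\, \mathbf{i} \, f(x)} \, \Big) \, {\rm d}x
$$
and apply Lemma \ref{lem1} with $g(x) := 1/(\mathbf{i} \, f'(x))$ and $h(x) := ({\rm d}/{\rm d}x) \, {\rm e}^{\, \mathbf{i} \, f(x)}$. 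The antiderivative of $h$ has modulus at most $2$, the sup-norm of $g$ is $1/\min |f'|$, and since $1/f'$ is monotone one has $\|g'\|_{L^1} = |1/f'(b) - 1/f'(a)| \le 2/\min |f'|$, which yields the base bound $6/\min_{[a,b]}|f'|$.

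For the inductive step from $k$ to $k+1$, let $\lambda_{k+1} := \min_{[a,b]} |f^{(k+1)}|$; after possibly replacing $f$ by $-f$, assume $f^{(k+1)} \ge \lambda_{k+1} > 0$, which forces $f^{(k)}$ to be strictly increasing. Hence $|f^{(k)}|$ attains its minimum on $[a,b]$ at a unique point $x_0$ (possibly an endpoint). For a parameter $\delta > 0$ to be tuned, split $[a,b]$ into the central slice $[x_0 - \delta, x_0 + \delta] \cap [a,b]$, on which the integral is bounded trivially by $2 \, \delta$, and the (at most two) outer pieces, on which the mean value theorem gives $|f^{(k)}(x)| \ge \lambda_{k+1} \, \delta$; the induction hypothesis, whose monotonicity requirement is supplied by the sign-constancy of $f^{(k+1)}$, then bounds each outer contribution by $C_k / (\lambda_{k+1} \, \delta)^{1/k}$. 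Summing and optimizing in $\delta$ (the optimal choice is $\delta \sim \lambda_{k+1}^{-1/(k+1)}$) produces a bound of order $\lambda_{k+1}^{-1/(k+1)}$, as required.

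The only delicate point is the treatment of the $k=1$ clause: the stated lemma gives no monotonicity hypothesis, and in fact the pure bound $C/\min |f'|$ fails in general without one (a rapidly oscillating $f'$ defeats a single integration by parts). This causes no trouble here because the inductive reduction from $k=2$ or $k=3$ automatically supplies monotonicity of $f'$ (resp.\ $f''$) on each outer piece through the sign-constancy of $f''$ (resp.\ $f'''$), so the $k=1$ line in the statement of Lemma \ref{lem2} should be read as tacitly assuming this. Apart from this point the bookkeeping is routine: one obtains explicit constants $C_1, C_2, C_3$ depending only on the universal constants in Lemma \ref{lem1}, and $C := \max(C_1, C_2, C_3)$ works.
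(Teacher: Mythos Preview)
The paper does not prove Lemma \ref{lem2}: it is simply stated as the classical van der Corput lemma and then invoked in the proof of Proposition \ref{prop1} (where only the $k=3$ case is actually used, applied to $f(\theta)=n(\omega\theta+c_3\theta^{3})$ whose third derivative is the nonzero constant $6nc_3$).

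Your proof sketch is the standard textbook argument (as in Stein, \emph{Harmonic Analysis}, Chapter VIII): base case $k=1$ by integration by parts, then induction by excising a $\delta$-neighborhood of the unique zero of $f^{(k)}$ and optimizing in $\delta$. This is correct and is exactly what one would write if asked to supply a proof. Your observation about the $k=1$ clause is also accurate: the bound $C/\min|f'|$ genuinely requires $f'$ to be monotone (or at least of bounded total variation), and the lemma as stated in the paper is slightly imprecise on this point. As you note, this causes no difficulty either for the induction or for the paper's application, since the $k=3$ bound is unconditional and is the only one the paper uses.
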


\noindent Following \cite{RSC1}, the combination of Lemma \ref{lem1} and Lemma \ref{lem2} yields a uniform bound for the Green's function,
as we now recall.

\begin{proposition}
\label{prop1}
Under Assumption \ref{hyp:1}, there exists a constant $C>0$ such that the Green's function in \eqref{def:green} satisfies:
$$
\forall \, n \in \N^* \, ,\quad \forall \, j \in \Z \, ,\quad \big| \, \Gjn \, \big| \, \le \, \dfrac{C}{n^{\, 1/3}} \, .
$$
\end{proposition}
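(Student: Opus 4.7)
The plan is to work from the Fourier representation \eqref{formuleGjn} and split the circle into a fixed small neighborhood of $\theta=0$ and its complement. On the complement, the dissipation condition \eqref{hyp:dissipation} together with continuity of $\widehat{F}_a$ on the compact set $\{\delta \le |\theta| \le \pi\}$ yields some $\eta>0$ such that $|\widehat{F}_a(-\theta)|\le e^{-\eta}$ there; consequently that piece of the integral contributes $O(e^{-\eta n})$, which is absorbed in the desired bound $C n^{-1/3}$. Hence the entire matter is to analyze the integral on $[-\delta,\delta]$ for some fixed $\delta\in(0,\delta_0)$ chosen small enough that the Taylor expansion \eqref{hyp:stabilite2} is available and the quartic coefficient $c_4$ dominates the quintic remainder.

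On $[-\delta,\delta]$, I would use \eqref{hyp:stabilite2} to factor the integrand of \eqref{formuleGjn} as $e^{\,\mathbf{i} f(\theta)} g(\theta)$, where
\begin{equation*}
f(\theta) \, := \, n\,\omega\,\theta \, + \, n\,c_3\,\theta^3, \qquad
g(\theta) \, := \, \exp\bigl(-\,n\,c_4\,\theta^4 \, - \, n\,\theta^5\,\varphi(-\theta)\bigr).
\end{equation*}
The phase $f$ has the key property $f'''(\theta) = 6\,n\,c_3$ (a constant bounded below in $|\theta|$ by a positive multiple of $n$, uniformly in $\omega$), while $g$ collects all the dissipative/higher-order content. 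For $\delta$ small enough, the quintic term is absorbed to give $|g(\theta)|\le \exp(-c\,n\,\theta^4)\le 1$, so $\|g\|_{L^\infty([-\delta,\delta])}\le 1$.

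The oscillatory estimate then comes from applying van der Corput (Lemma \ref{lem2}) with $k=3$ to the phase $f$: for every $a,x\in[-\delta,\delta]$,
\begin{equation*}
\left| \, \int_a^x \, {\rm e}^{\,\mathbf{i} f(\theta)} \, {\rm d}\theta \, \right| \, \le \, \dfrac{C}{(n\,c_3)^{1/3}}.
\end{equation*}
Crucially this bound is independent of $\omega$ because the third derivative of $f$ does not depend on $\omega$. Combining with Lemma \ref{lem1} gives
\begin{equation*}
\left| \, \int_{-\delta}^\delta \, {\rm e}^{\,\mathbf{i} f(\theta)} \, g(\theta) \, {\rm d}\theta \, \right| \,
\le \, \dfrac{C}{n^{1/3}} \, \Big( \, \|g\|_{L^\infty} \, + \, \|g'\|_{L^1([-\delta,\delta])} \, \Big).
\end{equation*}

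It remains to show $\|g'\|_{L^1}$ is bounded independently of $n$ and $\omega$. Since $g'(\theta) = g(\theta)\,(-4\,n\,c_4\,\theta^3 + O(n\,\theta^4))$ and $|g(\theta)|\le e^{-c\,n\,\theta^4}$, one has $|g'(\theta)|\le C\,n\,|\theta|^3\,e^{-c\,n\,\theta^4}$. The rescaling $u = n^{1/4}\theta$ converts this into a fixed, $n$-independent integral of the Schwartz function $|u|^3\,e^{-c u^4}$ over $\R$, which is finite. This produces the uniform bound, and the main obstacle I anticipate is precisely this bookkeeping: carefully choosing $\delta$ so that the $\theta^5\varphi(-\theta)$ contribution is genuinely dominated by $-\,c_4\,\theta^4$ at every $\theta\in[-\delta,\delta]$ (so that both $g$ and the part of $g'$ coming from the remainder stay controlled by Gaussian-in-$n^{1/4}\theta$ bounds), and verifying that every constant entering $\|g\|_\infty + \|g'\|_{L^1}$ is independent of $\omega\in[-2M,2M]$. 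Once this uniformity is in hand, the stated estimate $|\mathcal{G}_j^n|\le C\,n^{-1/3}$ follows by combining the near-zero contribution $O(n^{-1/3})$ with the exponentially small tail.
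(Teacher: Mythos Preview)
Your proposal is correct and follows essentially the same approach as the paper: split off the exponentially small tail by dissipation, then on $[-\delta,\delta]$ factor the integrand into the purely oscillatory piece $e^{\mathbf{i} n(\omega\theta+c_3\theta^3)}$ and the amplitude $g_n(\theta)=\exp(-nc_4\theta^4-n\theta^5\varphi(-\theta))$, apply van der Corput (Lemma~\ref{lem2}) with $k=3$ to the phase (whose third derivative $6nc_3$ is independent of $\omega$), and feed this into Lemma~\ref{lem1} together with the uniform bound $\|g_n\|_{L^\infty}+\|g_n'\|_{L^1}\le C$ obtained from $|g_n'|\le Cn|\theta|^3 e^{-cn\theta^4}$ and the rescaling $u=n^{1/4}\theta$. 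The paper's proof is the same in structure and in the key estimates; your choice of $\delta$ so that $C_0\delta\le c_4/2$ is exactly the condition the paper imposes.
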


\begin{proof}
With the radius $\delta_0>0$ fixed above and the constant $C_0$ in \eqref{defC0}, we choose $\delta>0$ such that $\delta \le \delta_0$ and
$$
\delta \, \le \, \pi \, , \quad \text{\rm and } \quad \delta \, C_0 \, \le \, \dfrac{c_4}{2} \, ,
$$
where $c_4>0$ is the constant associated with the $\theta^{\, 4}$ term in the Taylor expansion \eqref{hyp:stabilite2}. We then use the expression
\eqref{formuleGjn} and split the integral as:
\begin{equation}
\label{decompositionprop1-1}
 \Gjn \, = \, \varepsilon_j^n \, + \,
\dfrac{1}{2\, \pi} \, \int_{- \, \delta}^\delta \, {\rm e}^{\, \mathbf{i} \, n \, \omega \, \theta} \, \Big( {\rm e}^{\, \mathbf{i} \, \alpha \, \theta} \,
\widehat{F}_a (- \, \theta) \Big)^n \, {\rm d}\theta \, ,
\end{equation}
with:
$$
\varepsilon_j^n \, := \, \dfrac{1}{2\, \pi} \, \int_{-\pi}^{- \, \delta} \,
{\rm e}^{\, \mathbf{i} \, n \, \omega \, \theta} \, \Big( {\rm e}^{\, \mathbf{i} \, \alpha \, \theta} \, \widehat{F}_a (- \, \theta) \Big)^n \, {\rm d}\theta \, + \,
\dfrac{1}{2\, \pi} \, \int_\delta^\pi \,
{\rm e}^{\, \mathbf{i} \, n \, \omega \, \theta} \, \Big( {\rm e}^{\, \mathbf{i} \, \alpha \, \theta} \, \widehat{F}_a (- \, \theta) \Big)^n \, {\rm d}\theta \, .
$$

The estimate of $\varepsilon_j^n$ follows from the dissipation assumption \eqref{hyp:dissipation}, from which we get:
$$
| \, \varepsilon_j^n \, | \, \le \, \Big( \, \max_{\delta \le \theta \le \pi} \, \big| \, \widehat{F}_a (\theta) \, \big| \, \Big)^n \, \le \, \dfrac{C}{n^{\, 1/3}} \, ,
$$
for some suitable constant $C>0$ that is independent of $n \in \N^*$. It remains to focus on the oscillatory integral on the interval $[-\delta,\delta]$.
As in \cite{RSC1}, we use the Taylor expansion \eqref{hyp:stabilite2} (hence the restriction $\delta \le \delta_0$) and write:
$$
\dfrac{1}{2\, \pi} \, \int_{- \, \delta}^\delta \, {\rm e}^{\, \mathbf{i} \, n \, \omega \, \theta} \, \Big( {\rm e}^{\, \mathbf{i} \, \alpha \, \theta} \,
\widehat{F}_a (- \, \theta) \Big)^n \, {\rm d}\theta \, = \, \int_{- \, \delta}^\delta \, h_{n,\omega}(\theta) \, g_n(\theta) \, {\rm d}\theta \, ,
$$
with:
$$
h_{n,\omega}(\theta) \, := \, {\rm e}^{\, \mathbf{i} \, n \, (\omega \, \theta \, + \, c_3 \, \theta^{\, 3})} \, ,\quad
g_n(\theta) \, := \, \dfrac{1}{2\, \pi} \, {\rm e}^{- \, n \, c_4 \, \theta^{\, 4} \, - \, n \, \theta^{\, 5} \, \varphi(-\theta)} \, .
$$

By applying the van der Corput Lemma (Lemma \ref{lem2}), there exists a constant $C>0$ that is independent of $\omega$ and $n$ such that:
$$
\forall \, x \in \, [-\delta,\delta] \, ,\quad \left| \, \int_{- \, \delta}^x \, h_{n,\omega}(\theta) \, {\rm d}\theta \, \right| \, \le \, \dfrac{C}{n^{\, 1/3}} \, .
$$
Furthermore, with our choice for the parameter $\delta$, we have:
$$
\forall \, \theta \in [-\delta,\delta] \, ,\quad | \, g_n(\theta) \, | \, \le \, \dfrac{1}{2\, \pi} \, \exp \left(- \, n \, \dfrac{c_4}{2} \, \theta^{\, 4} \right)
\, \le \, \dfrac{1}{2\, \pi} \, ,
$$
and, differentiating the expression for $g_n(\theta)$, we also get the bound:
$$
\forall \, \theta \in [-\delta,\delta] \, ,\quad | \, g_n'(\theta) \, | \, \le \, C \, n \, | \, \theta \, |^{\, 3} \, \exp \left(- \, n \, \dfrac{c_4}{2} \, \theta^{\, 4} \right) \, ,
$$
for some uniform constant $C$. We thus obtain that the quantity:
$$
\sup_{n \in \N^*} \, \Big( \, \| \, g_n \, \|_{L^\infty([-\delta,\delta])} \, + \, \| \, g_n' \, \|_{L^1([-\delta,\delta])} \, \Big)
$$
is finite. Applying Lemma \ref{lem1}, we get the final estimate:
$$
\left| \dfrac{1}{2\, \pi} \, \int_{- \, \delta}^\delta \, {\rm e}^{\, \mathbf{i} \, n \, \omega \, \theta} \, \Big( {\rm e}^{\, \mathbf{i} \, \alpha \, \theta} \,
\widehat{F}_a (- \, \theta) \Big)^n \, {\rm d}\theta \, \right| \, \le \, \dfrac{C}{n^{\, 1/3}} \, ,
$$
with a constant $C$ that does not depend on $\omega \in \R$ nor on $n \in \N^*$. Going back to the decomposition \eqref{decompositionprop1-1}
of $\Gjn$, the claim of Proposition \ref{prop1} follows.
\end{proof}

\subsection{The fast decaying tail}

In view of Proposition \ref{prop1}, we now consider $n \in \N^*$ and $j \in \Z$ such that $j \, - \, \alpha \, n \ge n^{\, 1/3}$ or, in other words, $\omega \ge n^{- \, 2/3}$.
We are going to use a contour deformation argument in order to prove the generalized Gaussian bound stated in Theorem \ref{thm:1}. The contour is guessed by
following the so-called saddle point method, see \cite{debruijn}. To determine the location of the saddle point as well as the path direction through the saddle point,
we use the (truncated) phase function:
$$
\theta \, \longmapsto \, \mathbf{i} \, n \, \big( \, \omega \, \theta \, + \, c_3 \, \theta^{\, 3} \, \big) \, .
$$
For $\omega>0$, the two saddle points are $\pm \mathbf{i} \, \sqrt{\omega/(3\, c_3)}$ and the one with ``lowest altitude'' is $\mathbf{i} \, \sqrt{\omega/(3\, c_3)}$,
hence the choice made below in the proof of Proposition \ref{prop2}. The negative direction through this saddle point corresponds to the real axis (see Figure
\ref{fig:contour1}). We verify below that this choice of contour deformation, which is associated with the truncated phase, handles well the complete phase in
\eqref{formuleGjn} that includes the $\theta^{\, 4}$ term as well as the $O(\theta^{\, 5})$ remainder. Our first result for the regime $\omega>0$ is the following.

\begin{proposition}
\label{prop2}
Under Assumption \ref{hyp:1}, there exists $\omega_0>0$ and there exist two constants $C>0$ and $c>0$ such that the Green's function in \eqref{def:green} satisfies:
$$
\forall \, n \in \N^* \, ,\quad \forall \, j \in \Z \, ,\quad \big| \, \Gjn \, \big| \, \le \, \dfrac{C}{(j \, - \, \alpha \, n)^{\, 1/4} \, \, n^{\, 1/4}} \,
\exp \left( - \, c \, \left( \dfrac{j \, - \, \alpha \, n}{n^{\, 1/3}} \right)^{3/2} \, \right) \, ,
$$
as long as $n \in \N^*$ and the parameter $\omega$ defined in \eqref{defomega} satisfy $n^{-2/3} \le \omega \le \omega_0$ (hence $j \, - \, \alpha \, n \, > \, 0$).
\end{proposition}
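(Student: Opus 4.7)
The plan is to implement the saddle-point / contour deformation sketched in the paper just before Proposition~2. Starting from the representation \eqref{formuleGjn} and using the dissipation assumption \eqref{hyp:dissipation}, I would first truncate the integral to $\theta\in[-\delta,\delta]$ at the cost of an $O(\mathrm{e}^{-cn})$ remainder (exactly as in the proof of Proposition~1). Since $\delta \le \delta_0$, the factorization $\mathrm{e}^{\mathbf{i}\alpha\theta}\,\widehat F_a(-\theta)=\exp\!\big(\mathbf{i}c_3\theta^{3}-c_4\theta^{4}-\theta^{5}\varphi(-\theta)\big)$ is holomorphic in the square $\{|\mathrm{Re}\,\theta|,|\mathrm{Im}\,\theta|<2\delta_0\}$, so I would push the contour to the rectangle with vertices $-\delta$, $-\delta+\mathbf{i}\eta$, $\delta+\mathbf{i}\eta$, $\delta$, where the height is the saddle point
\[
\eta\;:=\;\sqrt{\frac{\omega}{3\,c_3}}\,.
\]
Choosing $\omega_0$ small enough (so that $\eta\le\delta$ and the rectangle stays in the square where $\varphi$ is controlled), this deformation is legal by Cauchy.

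On the horizontal segment $\theta=x+\mathbf{i}\eta$, $x\in[-\delta,\delta]$, a direct computation using the identity $3c_3\eta^{2}=\omega$ shows that the imaginary part of $n\omega\theta+nc_3\theta^{3}$ collapses to $nc_3x^{3}$, while its real part equals $-\tfrac{2}{3}n\omega\eta-3nc_3\eta\,x^{2}$. Adding the contributions of $-nc_4\theta^{4}$ and of the holomorphic remainder $-n\theta^{5}\varphi(-\theta)$, and absorbing the cross terms and the $O(n|\theta|^{5})$ piece via $\delta\le\delta_0$, $\omega\le\omega_0$ and the size bound \eqref{defC0}, one gets an inequality of the form
\[
\mathrm{Re}\,\psi(x+\mathbf{i}\eta)\;\le\;-\,\frac{2\,n\,\omega\,\eta}{3}\;-\;c\,n\,\eta\,x^{2}\;-\;c\,n\,x^{4}\,,
\]
valid for $\omega_0$ small enough and a uniform $c>0$. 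A crude Gaussian bound then yields
\[
\left|\int_{-\delta}^{\delta}\mathrm{e}^{\psi(x+\mathbf{i}\eta)}\,\mathrm{d}x\right|
\;\le\;C\,\mathrm{e}^{-\,\frac{2\,n\,\omega\,\eta}{3}}
\int_{\R}\mathrm{e}^{-\,c\,n\,\eta\,x^{2}}\,\mathrm{d}x
\;\le\;\frac{C}{\sqrt{n\,\eta}}\,\mathrm{e}^{-\,\frac{2\,n\,\omega\,\eta}{3}}\,.
\]
Plugging in $\eta=\sqrt{\omega/(3c_3)}$ and $\omega=(j-\alpha n)/n$ turns $(n\eta)^{-1/2}$ into a constant multiple of $n^{-1/4}(j-\alpha n)^{-1/4}$ and $n\omega\eta$ into a constant multiple of $((j-\alpha n)/n^{1/3})^{3/2}$, giving exactly the bound in the statement.

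It remains to check that the contributions of the two vertical segments $\theta=\pm\delta+\mathbf{i}y$, $y\in[0,\eta]$, are swallowed by the right-hand side. On these segments $|\mathrm{Re}\,\theta|=\delta$ is bounded away from zero, so the $-nc_4\delta^{4}$ term dominates and the integrand is pointwise $O(\mathrm{e}^{-cn})$; combined with the length $\eta\le 1$ of the segment and with the initial $[-\pi,\pi]\setminus[-\delta,\delta]$ tail, this gives an additional $O(\mathrm{e}^{-cn})$ error, which for $\omega\le\omega_0$ small enough is dominated by the Gaussian/subexponential factor $\mathrm{e}^{-c\,n\,\omega^{3/2}}\ge \mathrm{e}^{-c\,n\,\omega_0^{3/2}}$ on the right-hand side. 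The main technical obstacle in this scheme is the bookkeeping of step~two: one must verify, uniformly in $\omega\in[n^{-2/3},\omega_0]$, that the mixed term $6nc_4\,x^{2}\eta^{2}$ and the holomorphic remainder $n\theta^{5}\varphi(-\theta)$ are both absorbable into $3nc_3\,\eta\,x^{2}+nc_4\,x^{4}+nc_4\,\eta^{4}$. Tracking the powers of $\eta$ and $x$ shows that this can be arranged by fixing $\omega_0$ small (so $\eta$ stays small relative to $c_3/c_4$) and $\delta$ small (so $\delta\,C_0$ is small relative to $c_4$), which is consistent with the choices already made for Proposition~1.
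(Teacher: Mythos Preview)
Your proposal is correct and follows essentially the same approach as the paper: truncation to $[-\delta,\delta]$, rectangular contour shift to height $\eta=\sqrt{\omega/(3c_3)}$, a Gaussian estimate on the horizontal segment after absorbing the $\theta^4$ cross term and the $\theta^5$ remainder, and $O(\mathrm{e}^{-cn})$ bounds on the vertical sides driven by the $-nc_4\delta^4$ contribution. One minor slip in wording: in your second paragraph the labels ``real part'' and ``imaginary part'' of $n\omega\theta+nc_3\theta^{3}$ are interchanged (the real part is $nc_3x^{3}$ and the imaginary part is $\tfrac{2}{3}n\omega\eta+3nc_3\eta x^{2}$), but your subsequent displayed inequality for $\mathrm{Re}\,\psi$ is correct and matches the paper's estimate \eqref{prop2estim4}.
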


\begin{proof}
In the regime considered in Proposition \ref{prop2}, the parameter $\omega$ in \eqref{formuleGjn} is positive and small (but cannot go arbitrarily close to $0$) and
the integer $n$ is thought as being large (at least large enough so that $n^{-\, 2/3} \le \omega_0$ with $\omega_0>0$ fixed as above). We start from the formula
\eqref{formuleGjn} and split again $\Gjn$ as:
$$
 \Gjn \, = \, \varepsilon_j^n \, + \,
\dfrac{1}{2\, \pi} \, \int_{- \, \delta}^\delta \, {\rm e}^{\, \mathbf{i} \, n \, \omega \, \theta} \, \Big( {\rm e}^{\, \mathbf{i} \, \alpha \, \theta} \, \widehat{F}_a (- \, \theta) \Big)^n
\, {\rm d}\theta \, ,
$$
with $\delta \in (0,\pi)$ to be fixed and:
$$
\varepsilon_j^n \, := \, \dfrac{1}{2\, \pi} \, \int_{-\pi}^{- \, \delta} \,
{\rm e}^{\, \mathbf{i} \, n \, \omega \, \theta} \, \Big( {\rm e}^{\, \mathbf{i} \, \alpha \, \theta} \, \widehat{F}_a (- \, \theta) \Big)^n \, {\rm d}\theta \, + \,
\dfrac{1}{2\, \pi} \, \int_\delta^\pi \,
{\rm e}^{\, \mathbf{i} \, n \, \omega \, \theta} \, \Big( {\rm e}^{\, \mathbf{i} \, \alpha \, \theta} \, \widehat{F}_a (- \, \theta) \Big)^n \, {\rm d}\theta \, .
$$
For reasons that will be made clear in the following lines, we choose the parameters $\delta$ and $\omega_0$ such that the following inequalities
hold\footnote{We recall that the parameter $c_3$ in \eqref{hyp:stabilite2} is assumed to be positive and that the constants $\delta_0$ and $C_0$
are determined by the remainder $\varphi$ in \eqref{hyp:stabilite2}. Hence the choice for $\delta$ and $\omega_0$ in \eqref{restrictionsprop2} is
nonempty.}:
\begin{equation}
\label{restrictionsprop2}
16 \, C_0 \, \delta \, \le \, \dfrac{c_4}{2} \, ,\quad \delta \, \le \, \delta_0 \, ,\quad \omega_0 \, \le \, 3 \, c_3 \, \delta_0^{\, 2} \, ,\quad
16 \, C_0 \, \sqrt{\dfrac{\omega_0}{3 \, c_3}} \, \le \, \dfrac{c_4}{2} \, ,\quad 12 \, c_4 \, \sqrt{\omega_0} \, \le \, (3 \, c_3)^{\, 3/2} \, .
\end{equation}

With this choice of $\delta$ (that is fixed once and for all), we use Assumption \ref{hyp:1} to write $\Gjn$ as:
\begin{equation}
\label{prop2decomposition}
\Gjn \, = \, \varepsilon_j^n \, + \,
\dfrac{1}{2\, \pi} \, \int_{- \, \delta}^\delta \, \exp \, \Big( \, \mathbf{i} \, n \, \big( \omega \, \theta \, + \, c_3 \, \theta^{\, 3} \big) \, - \, n \, c_4 \, \theta^{\, 4} \,
- \, n \, \theta^{\, 5} \, \varphi(-\theta) \, \Big) \, {\rm d}\theta \, ,
\end{equation}
with
\begin{equation}
\label{prop2estim1}
\big| \, \varepsilon_j^n \, \big| \, \le \, C \, {\rm e}^{- \, c \, n} \, ,
\end{equation}
for suitable constants $C>0$ and $c>0$. This is the same first step as in the proof of Proposition \ref{prop1}. For later use, we define:
$$
\Hjn \, := \, \dfrac{1}{2\, \pi} \, \int_{- \, \delta}^\delta \,
\exp \, \Big( \, \mathbf{i} \, n \, \big( \omega \, \theta \, + \, c_3 \, \theta^{\, 3} \big) \, - \, n \, c_4 \, \theta^{\, 4} \, - \, n \, \theta^{\, 5} \, \varphi(-\theta) \, \Big)
\, {\rm d}\theta \, ,
$$
and now focus on this term, which is the second term on the right hand side in the decomposition \eqref{prop2decomposition}. By our choice of $\delta$
and $\omega_0$ in \eqref{restrictionsprop2}, and the restriction $n^{-2/3} \le \omega \le \omega_0$, we can use the contour deformation depicted in
Figure \ref{fig:contour1}. This contour remains within the closed square $[-\delta_0,\delta_0] \times [-\delta_0,\delta_0]$ on which $\varphi$ is a holomorphic
function and we can bound its modulus by $C_0$. Applying Cauchy's formula \cite{rudin}, we thus get:
$$
\Hjn \, = \, \varepsilon_j^n(1) \, + \, \varepsilon_j^n(2) \, + \, \widetilde{\mathcal{H}}_j^{\, n} \, ,
$$
where $\varepsilon_j^n(1)$ corresponds to the integral on the left vertical segment, $\varepsilon_j^n(2)$ corresponds to the integral on the right vertical segment,
and the leading contribution $\widetilde{\mathcal{H}}_j^{\, n}$ corresponds to the integral on the horizontal segment (these contributions are depicted in red in
Figure \ref{fig:contour1}). We obtain the expressions:
\begin{multline*}
\varepsilon_j^n(1) \, = \, \int_0^{\sqrt{\frac{\omega}{3 \, c_3}}} \,
\exp \, \Big( \, \mathbf{i} \, n \, \big( \omega \, (-\delta+\mathbf{i} \, y) \, + \, c_3 \, (-\delta+\mathbf{i} \, y)^{\, 3} \big) \\
- \, n \, c_4 \, (-\delta+\mathbf{i} \, y)^{\, 4} \, - \, n \, (-\delta+\mathbf{i} \, y)^{\, 5} \, \varphi(\delta-\mathbf{i} \, y) \, \Big) \, \mathbf{i} \, {\rm d}y \, ,
\end{multline*}
$$
\varepsilon_j^n(2) \, = \, - \, \int_0^{\sqrt{\frac{\omega}{3 \, c_3}}} \,
\exp \, \Big( \, \mathbf{i} \, n \, \big( \omega \, (\delta+\mathbf{i} \, y) \, + \, c_3 \, (\delta+\mathbf{i} \, y)^{\, 3} \big)
\, - \, n \, c_4 \, (\delta+\mathbf{i} \, y)^{\, 4} \, - \, n \, (\delta+\mathbf{i} \, y)^{\, 5} \, \varphi(-\delta-\mathbf{i} \, y) \, \Big) \, \mathbf{i} \, {\rm d}y \, ,
$$
and
\begin{multline}
\label{prop2Htildejn}
\widetilde{\mathcal{H}}_j^{\, n} \, = \, \dfrac{1}{2\, \pi} \, \int_{- \, \delta}^\delta \, \exp \, \Big( \, \mathbf{i} \, n \, \Big( \omega \, \left( \mathbf{i} \, \sqrt{\frac{\omega}{3 \, c_3}}
+\theta \right) \, + \, c_3 \, \left( \mathbf{i} \, \sqrt{\frac{\omega}{3 \, c_3}} +\theta \right)^{\, 3} \Big) \\
- \, n \, c_4 \, \left( \mathbf{i} \, \sqrt{\frac{\omega}{3 \, c_3}} +\theta \right)^{\, 4} \, - \, n \, \left( \mathbf{i} \, \sqrt{\frac{\omega}{3 \, c_3}} +\theta \right)^{\, 5} \,
\varphi \left( -\, \mathbf{i} \, \sqrt{\frac{\omega}{3 \, c_3}} -\theta \right) \, \Big) \, {\rm d}\theta \, .
\end{multline}

\begin{figure}[h!]
\begin{center}
\begin{tikzpicture}[scale=1.5,>=latex]
\draw[black,->] (-4,0) -- (4,0);
\draw[black,->] (0,-0.5)--(0,3);
\draw[thick,blue,->] (-3,0) -- (-3,1);
\draw[thick,blue] (-3,1) -- (-3,2);
\draw[thick,blue,->] (-3,2) -- (1,2);
\draw[thick,blue] (1,2) -- (3,2);
\draw[thick,blue,->] (3,2) -- (3,0.9);
\draw[thick,blue] (3,0.9) -- (3,0);
\draw (-3,-0.1) node[below]{$-\delta$};
\draw (3,-0.1) node[below]{$\delta$};
\draw (0.1,0) node[below]{$0$};
\draw (-3.5,1.5) node[above]{{\color{red}$\varepsilon_j^n(1)$}};
\draw (3.6,1.5) node[above]{{\color{red}$\varepsilon_j^n(2)$}};
\draw (-1.5,2.5) node[above]{{\color{red}$\widetilde{\mathcal{H}}_j^{\, n}$}};
\draw[thick,red,->] (-3.05,1.1) arc (270:180:0.5) ;
\draw[thick,red,->] (3.05,1.1) arc (270:360:0.5) ;
\draw[thick,red,->] (-0.9,2.1) -- (-1.32,2.65);
\draw (0,1.5) node[right]{$\mathbf{i} \, \sqrt{\dfrac{\omega}{3 \, c_3}}$};
\draw (3.5,2.5) node {$\C$};
\node[red] (centre) at (0,2){$\bullet$};
\node (centre) at (-3,0){$\bullet$};
\node (centre) at (-3,2){$\bullet$};
\node (centre) at (3,0){$\bullet$};
\node (centre) at (3,2){$\bullet$};
\end{tikzpicture}
\caption{The integration contour in the case $n^{-2/3} \le \omega \le \omega_0$. The (approximate) saddle point is represented
with a red bullet. The black bullets represent the end points of the three segments along which we compute the integrals $\varepsilon_j^n(1)$,
$\varepsilon_j^n(2)$ and $\widetilde{\mathcal{H}}_j^{\, n}$.}
\label{fig:contour1}
\end{center}
\end{figure}
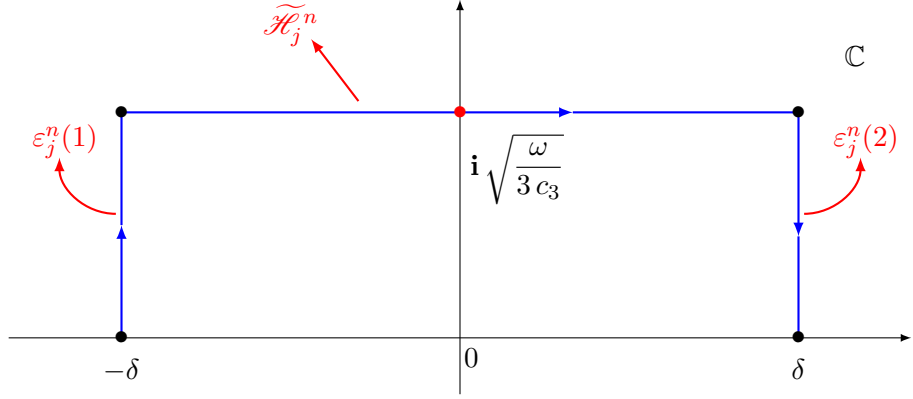

Let us start with $\varepsilon_j^n(1)$. From now on, when we write $\exp ( \, \mathbf{i} \, \cdots)$, the dots always stand for a real number whose expression is
useless since in the end a modulus of this expression will be simply estimated by $1$. Going back to the defining expression for $\varepsilon_j^n(1)$, we expand
the various expressions within the integral and compute:
$$
\varepsilon_j^n(1) \, = \, {\rm e}^{- \, n \, c_4 \, \delta^{\, 4}} \, \int_0^{\sqrt{\frac{\omega}{3 \, c_3}}} \, {\rm e}^{\, \mathbf{i} \, \cdots} \, \,
{\rm e}^{- \, n \, \Big( (\omega \, + \, 3 \, c_3 \, \delta^{\, 2}) \, y \, - \, 6 \, c_4 \, \delta^{\, 2} \, y^{\, 2} \, - \, c_3 \, y^{\, 3} \, + \, c_4 \, y^{\, 4} \Big)} \,
{\rm e}^{\, - \, n \, (-\delta+\mathbf{i} \, y)^{\, 5} \, \varphi(\delta-\mathbf{i} \, y)} \, {\rm d}y \, .
$$
We take the modulus of each side of the equality and apply the triangle inequality to get:
$$
\big| \, \varepsilon_j^n(1) \, \big| \, \le \, {\rm e}^{- \, n \, c_4 \, \delta^{\, 4}} \, \int_0^{\sqrt{\frac{\omega}{3 \, c_3}}} \,
{\rm e}^{- \, n \, \Big( (\omega \, + \, 3 \, c_3 \, \delta^{\, 2}) \, y \, - \, 6 \, c_4 \, \delta^{\, 2} \, y^{\, 2} \, - \, c_3 \, y^{\, 3} \, + \, c_4 \, y^{\, 4} \Big)} \,
{\rm e}^{\, n \, C_0 \, |-\delta+\mathbf{i} \, y \, |^{\, 5}} \, {\rm d}y \, ,
$$
where we used the fact that the integration contour is located within the region where the modulus of $\varphi$ is less than $C_0$. We now apply the H\"older
inequality in $\C^{\, 2}$ to get ($y$ is nonnegative here):
$$
\big| \, - \, \delta \, + \, \mathbf{i} \, y \, \big|^{\, 5} \, \le \, 16 \, \big( \, \delta^{\, 5} \, + \, y^{\, 5} \, \big) \, ,
$$
which gives:
$$
\big| \, \varepsilon_j^n(1) \, \big| \, \le \, {\rm e}^{- \, n \, c_4 \, \delta^{\, 4}} \, {\rm e}^{\, 16 \, n \, C_0 \, \delta^{\, 5}} \, \int_0^{\sqrt{\frac{\omega}{3 \, c_3}}} \,
{\rm e}^{- \, n \, \Big( (\omega \, + \, 3 \, c_3 \, \delta^{\, 2}) \, y \, - \, 6 \, c_4 \, \delta^{\, 2} \, y^{\, 2} \, - \, c_3 \, y^{\, 3} \, + \, c_4 \, y^{\, 4} \Big)} \,
{\rm e}^{\, 16 \, n \, C_0 \, y^{\, 5}} \, {\rm d}y \, .
$$
The restrictions (see \eqref{restrictionsprop2}):
$$
16 \, C_0 \, \delta \, \le \, \dfrac{c_4}{2} \, ,\qquad \omega \, \le \, \omega_0 \, ,\qquad 16 \, C_0 \, \sqrt{\dfrac{\omega_0}{3 \, c_3}} \, \le \, \dfrac{c_4}{2} \, ,
$$
imply that the terms with $\delta^{\, 5}$ and $y^{\, 5}$ can be absorbed by half the ones with the power $4$, namely:
\begin{align*}
\big| \, \varepsilon_j^n(1) \, \big| \, &\le \, {\rm e}^{- \, n \, \frac{c_4}{2} \, \delta^{\, 4}} \, \int_0^{\sqrt{\frac{\omega}{3 \, c_3}}} \,
{\rm e}^{- \, n \, \Big( (\omega \, + \, 3 \, c_3 \, \delta^{\, 2}) \, y \, - \, 6 \, c_4 \, \delta^{\, 2} \, y^{\, 2} \, - \, c_3 \, y^{\, 3} \, + \, \frac{c_4}{2} \, y^{\, 4} \Big)} \, {\rm d}y \\
&\le \, {\rm e}^{- \, n \, \frac{c_4}{2} \, \delta^{\, 4}} \, \int_0^{\sqrt{\frac{\omega}{3 \, c_3}}} \,
{\rm e}^{- \, n \, \Big( (\omega \, + \, 3 \, c_3 \, \delta^{\, 2}) \, y \, - \, 6 \, c_4 \, \delta^{\, 2} \, y^{\, 2} \, - \, c_3 \, y^{\, 3} \Big)} \, {\rm d}y \, .
\end{align*}
Let us now note that on the interval $[0,\sqrt{\omega/(3 \, c_3)}]$, we have:
$$
- \, c_3 \, y^{\, 3} \, \ge \, - \, \dfrac{\omega}{3} \, y \, ,
$$
and we also use the last inequality in \eqref{restrictionsprop2} to get:
\begin{equation}
\label{prop2estim2}
\big| \, \varepsilon_j^n(1) \, \big| \, \le \, {\rm e}^{- \, n \, \frac{c_4}{2} \, \delta^{\, 4}} \, \int_0^{\sqrt{\frac{\omega}{3 \, c_3}}} \,
{\rm e}^{- \, n \, (\frac{2 \, \omega}{3} \, + \, \frac{3}{2} \, c_3 \, \delta^{\, 2}) \, y} \, {\rm d}y \, \le \,
{\rm e}^{- \, n \, \frac{c_4}{2} \, \delta^{\, 4}} \, \int_0^{+ \, \infty} \, {\rm e}^{- \, n \, \frac{3}{2} \, c_3 \, \delta^{\, 2} \, y} \, {\rm d}y \, \le \,
C \, {\rm e}^{- \, c \, n} \, ,
\end{equation}
for suitable constants $C>0$ and $c>0$.

The estimate of the integral $\varepsilon_j^n(2)$ along the right vertical segment is entirely similar. At this stage, we can collect \eqref{prop2estim1} and
\eqref{prop2estim2} to show that for $n^{\, -2/3} \le \omega \le \omega_0$, the Green's function $\Gjn$ satisfies:
\begin{equation}
\label{prop2estim3}
\Big| \, \Gjn \, - \, \widetilde{\mathcal{H}}_j^{\, n} \, \Big| \, \le \, C \, {\rm e}^{- \, c \, n} \, ,
\end{equation}
where the expression of the (presumably leading) contribution $\widetilde{\mathcal{H}}_j^{\, n}$ is given in \eqref{prop2Htildejn}. Let us therefore turn to the
study of $\widetilde{\mathcal{H}}_j^{\, n}$.
\bigskip

We expand the expressions within the integral on the right hand side of \eqref{prop2Htildejn} and obtain:
\begin{multline*}
\widetilde{\mathcal{H}}_j^{\, n} \, = \, \dfrac{{\rm e}^{- \, \frac{2}{3 \, \sqrt{3 \, c_3}} \, n \, \omega^{\, 3/2}} \, {\rm e}^{- \, \frac{c_4}{9 \, c_3^{\, 2}} \, n \, \omega^{\, 2}}}{2 \, \pi}
\, \int_{- \, \delta}^\delta  \, {\rm e}^{\, \mathbf{i} \, \cdots} \, {\rm e}^{- \, n \, \Big( \sqrt{3 \, c_3 \, \omega} \, - \, \frac{2 \, c_4}{c_3} \, \omega \Big) \, \theta^{\, 2}} \,
{\rm e}^{- \, n \, c_4 \, \theta^{\, 4}} \, \times \\
\exp \, \left( - \, n \, \left( \mathbf{i} \, \sqrt{\frac{\omega}{3 \, c_3}} +\theta \right)^{\, 5} \, \varphi \left( -\mathbf{i} \, \sqrt{\frac{\omega}{3 \, c_3}} -\theta \right) \, \right)
\, {\rm d}\theta \, .
\end{multline*}
We take the modulus on each side of the equality and apply the triangle inequality to get (the same H\"older inequality as above is used to deal with the
remainder term on the second line):
\begin{multline*}
\Big| \, \widetilde{\mathcal{H}}_j^{\, n} \, \Big| \, \le \,
\dfrac{{\rm e}^{- \, \frac{2}{3 \, \sqrt{3 \, c_3}} \, n \, \omega^{\, 3/2}} \, {\rm e}^{- \, \frac{c_4}{9 \, c_3^{\, 2}} \, n \, \omega^{\, 2}}}{2 \, \pi} \, \int_{- \, \delta}^\delta \,
{\rm e}^{- \, n \, \Big( \sqrt{3 \, c_3 \, \omega} \, - \, \frac{2 \, c_4}{c_3} \, \omega \Big) \, \theta^{\, 2}} \, {\rm e}^{- \, n \, c_4 \, \theta^{\, 4}} \, \times \\
\exp \, \left( 16 \, n \, C_0 \, \left( \frac{\omega^{\, 5/2}}{(3 \, c_3)^{\, 5/2}}  \, + \, | \, \theta \, |^{\, 5} \right) \, \right) \, {\rm d}\theta \, .
\end{multline*}
Again, our restrictions on $\omega_0$ and $\delta$ in \eqref{restrictionsprop2} imply that the final remainder terms can be absorbed by half of some
already arising with a ``good'' sign, and we get the estimate:
\begin{align*}
\Big| \, \widetilde{\mathcal{H}}_j^{\, n} \, \Big| \, & \le \,
\dfrac{{\rm e}^{- \, \frac{2}{3 \, \sqrt{3 \, c_3}} \, n \, \omega^{\, 3/2}} \, {\rm e}^{- \, \frac{c_4}{18 \, c_3^{\, 2}} \, n \, \omega^{\, 2}}}{2 \, \pi}
\, \int_{- \, \delta}^\delta \, {\rm e}^{- \, n \, \Big( \sqrt{3 \, c_3 \, \omega} \, - \, \frac{2 \, c_4}{c_3} \, \omega \Big) \, \theta^{\, 2}} \,
{\rm e}^{- \, n \, \frac{c_4}{2} \, \theta^{\, 4}} \, {\rm d}\theta \\
& \le \, \dfrac{{\rm e}^{- \, \frac{2}{3 \, \sqrt{3 \, c_3}} \, n \, \omega^{\, 3/2}}}{2 \, \pi}
\, \int_{- \, \delta}^\delta \, {\rm e}^{- \, n \, \Big( \sqrt{3 \, c_3 \, \omega} \, - \, \frac{2 \, c_4}{c_3} \, \omega \Big) \, \theta^{\, 2}} \, {\rm d}\theta \, .
\end{align*}
At last, we use the bound from above (see \eqref{restrictionsprop2}):
$$
\dfrac{2 \, c_4}{c_3} \, \omega \, \le \, \sqrt{\omega} \, \, \dfrac{2 \, c_4}{c_3} \, \sqrt{\omega_0} \, \le \, \dfrac{\sqrt{3 \, c_3 \, \omega}}{2} \, ,
$$
to get:
$$
\Big| \, \widetilde{\mathcal{H}}_j^{\, n} \, \Big| \, \le \, \dfrac{{\rm e}^{- \, \frac{2}{3 \, \sqrt{3 \, c_3}} \, n \, \omega^{\, 3/2}}}{2 \, \pi} \,
\int_{- \, \delta}^\delta \, {\rm e}^{- \, n \, \frac{\sqrt{3 \, c_3 \, \omega}}{2} \, \theta^{\, 2}} \, {\rm d}\theta
 \, \le \, \dfrac{{\rm e}^{- \, \frac{2}{3 \, \sqrt{3 \, c_3}} \, n \, \omega^{\, 3/2}}}{2 \, \pi} \,
\int_\R \, {\rm e}^{- \, n \, \frac{\sqrt{3 \, c_3 \, \omega}}{2} \, \theta^{\, 2}} \, {\rm d}\theta \, ,
$$
and we therefore end up with our final estimate:
\begin{equation}
\label{prop2estim4}
\Big| \, \widetilde{\mathcal{H}}_j^{\, n} \, \Big| \, \le \, \dfrac{C}{n^{\, 1/2} \, \omega^{\, 1/4}} \, \exp \Big( - \, c \, n \, \omega^{\, 3/2} \Big) \, .
\end{equation}

We now combine \eqref{prop2estim3} and \eqref{prop2estim4} to get:
$$
\big| \, \Gjn \, \big| \, \le \, C \, \exp (- \, c \, n) \, + \, \dfrac{C}{(j \, - \, \alpha \, n)^{\, 1/4} \, n^{\, 1/4}} \,
\exp \left( - \, c \, \left( \dfrac{j \, - \, \alpha \, n}{n^{\, 1/3}} \right)^{3/2} \, \right) \, ,
$$
as long as $j$ and $n$ satisfy $n^{\, -2/3} \le \omega \le \omega_0$. For such integers, it can be easily seen that the leading contribution on the right hand
side of this last inequality is the second one. Namely, given $C$, $c$ and $\omega_0$ positive, we can always find other positive constants $C'$ and $c'$
such that for $0 \, < \, j \, - \alpha \, n \, \le \, \omega_0 \, n$, there holds:
$$
C \, \exp (- \, c \, n) \, \le \, \dfrac{C'}{(j \, - \, \alpha \, n)^{\, 1/4} \, n^{\, 1/4}} \, \exp \left( - \, c' \, \left( \dfrac{j \, - \, \alpha \, n}{n^{\, 1/3}} \right)^{3/2} \, \right) \, .
$$
We have thus obtained the estimate of the Green's function $\Gjn$ as claimed in Proposition \ref{prop2}.
\end{proof}

\noindent The final case to deal with in this section is when the parameter $\omega$ belongs to the interval $[\omega_0,2\, M]$.

\begin{proposition}
\label{prop3}
Under Assumption \ref{hyp:1}, with the same $\omega_0>0$ as in Proposition \ref{prop2}, there exist two constants $C>0$ and $c>0$ such that the Green's
function in \eqref{def:green} satisfies:
$$
\forall \, n \in \N^* \, ,\quad \forall \, j \in \Z \, ,\quad \big| \, \Gjn \, \big| \, \le \, \dfrac{C}{(j \, - \, \alpha \, n)^{\, 1/4} \, \, n^{\, 1/4}} \,
\exp \left( - \, c \, \left( \dfrac{j \, - \, \alpha \, n}{n^{\, 1/3}} \right)^{3/2} \, \right) \, ,
$$
as long as $n \in \N^*$ and the parameter $\omega$ defined in \eqref{defomega} satisfy $\omega_0 \le \omega \le 2 \, M$ (hence $j \, - \, \alpha \, n \, > \, 0$).
\end{proposition}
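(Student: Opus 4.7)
Since $\omega$ stays in the fixed compact interval $[\omega_0,2\, M]$, the prefactor $\omega^{\, -\, 1/4}$ and the rate $\omega^{\, 3/2}$ appearing in the announced bound are bounded from above and below by positive constants. It is therefore enough to prove a plain exponential bound of the form $| \, \Gjn \, | \, \le \, C \, {\rm e}^{\, - \, c \, n}$ with uniform constants $C, c>0$ depending only on $\omega_0$ and $M$: the stated form of the estimate is then recovered by inserting the (harmless) polynomial factor $n^{\, -\, 1/2}$ and by replacing $c$ with any smaller constant $c' \, < \, c/(2\, M)^{\, 3/2}$ that absorbs the comparison $\omega^{\, 3/2} \, \le \, (2\, M)^{\, 3/2}$.

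In contrast with Proposition \ref{prop2}, the saddle point $\mathbf{i} \, \sqrt{\omega/(3\, c_3)}$ associated with the truncated cubic phase is no longer close to the origin and typically falls outside the strip where the Taylor expansion \eqref{hyp:stabilite2} is valid. I would therefore abandon the optimal saddle-point contour in this regime and perform instead a ``safe'' contour deformation by a small amount independent of $\omega$: starting from \eqref{formuleGjn}, I would shift the integration contour from $[-\pi,\pi]$ to $[-\pi,\pi] \, + \, \mathbf{i}\, \eta^*$ for some $\eta^*>0$ to be chosen at the very end. Such a shift is legitimate because the integrand $\theta \, \mapsto \, {\rm e}^{\, \mathbf{i}\, j\, \theta}\, \widehat{F}_a(-\theta)^{\, n}$ is $2\pi$-periodic (since $j \in \Z$) and because $\widehat{F}_a$ is entire as a trigonometric polynomial, so that the two vertical segments joining $\pm\, \pi$ to $\pm\, \pi\, +\, \mathbf{i}\, \eta^*$ yield opposite contributions by Cauchy's theorem. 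Extracting the modulus of ${\rm e}^{\, \mathbf{i}\, n\, \omega\, \theta}$ along the shifted contour leads to the bound
$$
\big| \, \Gjn \, \big| \, \le \, {\rm e}^{\, -\, n\, \omega\, \eta^*}\, K(\eta^*)^{\, n}\, ,\qquad
K(\eta^*) \, := \, \sup_{\theta' \in [-\pi,\pi]} \, \big| \, {\rm e}^{\, \mathbf{i}\, \alpha\, (\theta'\, +\, \mathbf{i}\, \eta^*)}\, \widehat{F}_a\big(-(\theta'\, +\, \mathbf{i}\, \eta^*)\big)\, \big|\, .
$$

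The crucial step of the plan is then to show $K(\eta^*) \, \le \, {\rm e}^{\, c_3\, \eta^{*\, 3}}$ for all sufficiently small $\eta^*>0$, the choice being independent of $\omega$ and $n$. I would split the supremum according to $|\theta'| \, \le \, \delta_0$ and $\delta_0 \, \le \, |\theta'| \, \le \, \pi$. On the inner interval, the Taylor expansion \eqref{hyp:stabilite2} is at hand, and a direct computation of the real part of the exponent at $\theta \, = \, \theta' \, + \, \mathbf{i}\, \eta^*$ gives $c_3\, \eta^{*\, 3}\, - \, 3\, c_3\, \theta'^{\, 2}\, \eta^*\, - \, c_4\, \theta'^{\, 4}$ up to higher-order remainders that can be absorbed exactly as in the proof of Proposition \ref{prop2} (using the constant $C_0$ defined in \eqref{defC0}). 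Since the two latter terms are nonpositive, the inner supremum is bounded by ${\rm e}^{\, c_3\, \eta^{*\, 3}}$, with the maximum attained near $\theta' \, = \, 0$. On the outer region $\delta_0 \, \le \, |\theta'| \, \le \, \pi$, the Taylor expansion is no longer available, but the strict dissipation condition \eqref{hyp:dissipation} yields $\kappa_0 \, := \, \max_{\delta_0 \, \le \, |\theta'| \, \le \, \pi}|\widehat{F}_a(-\theta')|\, < \, 1$, and by uniform continuity of $\widehat{F}_a$ on a complex strip around this compact set, the outer supremum remains bounded by $(\kappa_0\, + \, 1)/2\, <\, 1$ for $\eta^*$ small enough. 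Combining both contributions and using $\omega \, \ge \, \omega_0$, one obtains
$$
\big| \, \Gjn \, \big| \, \le \, \exp \big(\!- \, n\, (\omega\, \eta^*\, - \, c_3\, \eta^{*\, 3})\big) \, \le \, \exp \big(\!- \, n\, (\omega_0\, \eta^*\, - \, c_3\, \eta^{*\, 3})\big)\, ,
$$
and a final choice of $\eta^*$ small enough so that $c_3\, \eta^{*\, 2}\, \le \, \omega_0/2$ produces $|\, \Gjn\, | \, \le \, {\rm e}^{\, -\, n\, \omega_0\, \eta^*/2}$, from which the claim of Proposition \ref{prop3} follows by the reduction explained in the first paragraph. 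The main technical difficulty I foresee is to make the gluing between the two regions fully uniform, i.e.\ to check that the continuity-based bound on the outer region and the Taylor-based bound on the inner region can both be enforced with a single choice of $\eta^*$ depending only on the data of Assumption \ref{hyp:1} and on $\omega_0$.
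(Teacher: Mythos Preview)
Your reduction in the first paragraph is exactly what the paper does as well: it proves a plain bound $|\Gjn|\le C\,{\rm e}^{-cn}$ in the regime $\omega\in[\omega_0,2M]$ and then converts it into the stated form using that $\omega$ ranges over a fixed compact interval. The difference lies in how that exponential bound is obtained.

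The paper proceeds in parallel with Proposition~\ref{prop2}: it first truncates to $[-\delta,\delta]$ using dissipation, and then deforms this inner segment to a rectangle of \emph{fixed} height $\sqrt{\omega_0/(3c_3)}$ (the saddle height frozen at the threshold value $\omega_0$, so that the contour stays inside the square where $|\varphi|\le C_0$). Two vertical sides and one horizontal side must then be estimated separately, exactly as in Proposition~\ref{prop2}.

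Your route is genuinely different and, in this particular regime, somewhat more economical: by shifting the whole interval $[-\pi,\pi]$ vertically and exploiting $2\pi$-periodicity of $\theta\mapsto{\rm e}^{\mathbf{i}j\theta}\widehat{F}_a(-\theta)^n$, you avoid the truncation step and the bookkeeping of vertical segments altogether. The price is that you must control the supremum of $|{\rm e}^{\mathbf{i}\alpha\theta}\widehat{F}_a(-\theta)|$ on the \emph{entire} shifted segment, which forces the inner/outer splitting and the continuity argument on the outer part. Two small points to tighten: (i) the split should be at some $\delta\le\delta_0$ chosen so that $16C_0\delta\le c_4/2$ (as in \eqref{restrictionsprop2}), not at $\delta_0$ itself, since otherwise the $\theta^5\varphi$ remainder need not be absorbed by the $\theta^4$ term; (ii) the real part of $-c_4(\theta'+\mathbf{i}\eta^*)^4$ also produces a positive cross term $+6c_4\,\theta'^{\,2}\eta^{*\,2}$, which must be absorbed by $-3c_3\,\theta'^{\,2}\eta^*$ through the additional constraint $2c_4\eta^*\le c_3$. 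With these adjustments your argument goes through, and the ``gluing'' concern you flag is handled simply by taking $\eta^*$ small enough to satisfy all the finitely many constraints simultaneously.
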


\begin{proof}
The proof follows similar lines as that of Proposition \ref{prop2}. The difference is that we can no longer choose the same contour as in Figure \ref{fig:contour1}
since for too large values of $\omega$, there is no reason why the contour would remain within the holomorphy region of $\varphi$. Nevertheless, we still fix
the parameters $\delta$ and $\omega_0$ as in \eqref{restrictionsprop2} and decompose $\Gjn$ as in \eqref{prop2decomposition}, where the remainder
$\varepsilon_j^n$ is uniformly exponentially small, see \eqref{prop2estim1}. We now use the contour depicted in Figure \ref{fig:contour2} where the ``height''
has been kept fixed equal to $\sqrt{\omega_0/(3\, c_3)}$, independently of $\omega$, in order to remain within the region where $\varphi$ is holomorphic
and bounded by $C_0$.

\begin{figure}[h!]
\begin{center}
\begin{tikzpicture}[scale=1.5,>=latex]
\draw[black,->] (-4,0) -- (4,0);
\draw[black,->] (0,-0.5)--(0,3);
\draw[thick,blue,->] (-3,0) -- (-3,1);
\draw[thick,blue] (-3,1) -- (-3,2);
\draw[thick,blue,->] (-3,2) -- (1,2);
\draw[thick,blue] (1,2) -- (3,2);
\draw[thick,blue,->] (3,2) -- (3,0.8);
\draw[thick,blue] (3,0.8) -- (3,0);
\draw (-3,-0.1) node[below]{$-\delta$};
\draw (3,-0.1) node[below]{$\delta$};
\draw (0.1,0) node[below]{$0$};
\draw (-3.5,1.5) node[above]{{\color{red}$\varepsilon_j^n(1)$}};
\draw (3.6,1.5) node[above]{{\color{red}$\varepsilon_j^n(2)$}};
\draw (-1.5,2.5) node[above]{{\color{red}$\widetilde{\mathcal{H}}_j^{\, n}$}};
\draw[thick,red,->] (-3.05,1.1) arc (270:180:0.5) ;
\draw[thick,red,->] (3.05,1.1) arc (270:360:0.5) ;
\draw[thick,red,->] (-0.9,2.1) -- (-1.32,2.65);
\draw (0,1.5) node[right]{$\mathbf{i} \, \sqrt{\dfrac{\omega_0}{3 \, c_3}}$};
\draw (3.5,2.5) node {$\C$};
\node[red] (centre) at (0,2){$\bullet$};
\node (centre) at (-3,0){$\bullet$};
\node (centre) at (-3,2){$\bullet$};
\node (centre) at (3,0){$\bullet$};
\node (centre) at (3,2){$\bullet$};
\end{tikzpicture}
\caption{The integration contour in the case $\omega_0 \le \omega \le 2 \, M$. The (approximate) saddle point is represented
with a red bullet. The black bullets represent the end points of the three segments along which we compute the integrals}
\label{fig:contour2}
\end{center}
\end{figure}
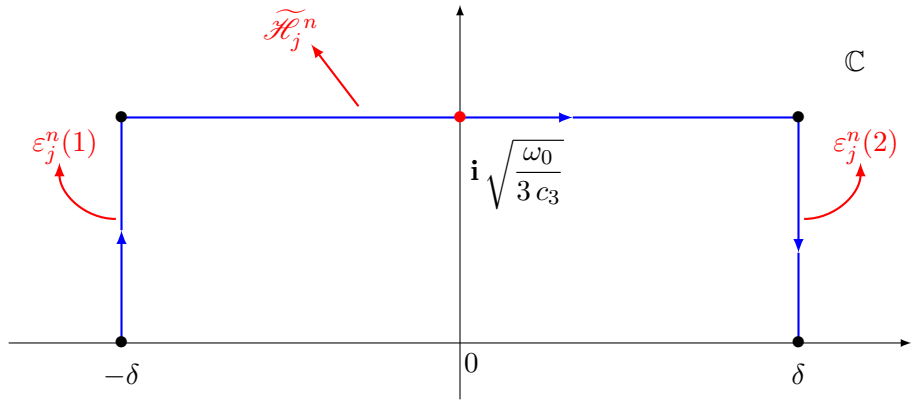

Keeping the same notation as in the proof of Proposition \ref{prop2} (see Figure \ref{fig:contour2}), we have thus decomposed the Green's function $\Gjn$ as:
$$
\Gjn \, = \, \varepsilon_j^n \, + \, \varepsilon_j^n(1) \, + \, \varepsilon_j^n(2) \, + \, \widetilde{\mathcal{H}}_j^{\, n} \, ,
$$
where the first remainder $\varepsilon_j^n$ is estimated as in \eqref{prop2estim1}, and the other terms are given by\footnote{For $\varepsilon_j^n(1)$ and
$\varepsilon_j^n(2)$, the difference with the proof of Proposition \ref{prop2} is in the upper bound of the interval over which we integrate.}:
\begin{multline*}
\varepsilon_j^n(1) \, = \, \int_0^{\sqrt{\frac{\omega_0}{3 \, c_3}}} \,
\exp \, \Big( \, \mathbf{i} \, n \, \big( \omega \, (-\delta+\mathbf{i} \, y) \, + \, c_3 \, (-\delta+\mathbf{i} \, y)^{\, 3} \big) \\
- \, n \, c_4 \, (-\delta+\mathbf{i} \, y)^{\, 4} \, - \, n \, (-\delta+\mathbf{i} \, y)^{\, 5} \, \varphi(\delta-\mathbf{i} \, y) \, \Big) \, \mathbf{i} \, {\rm d}y \, ,
\end{multline*}
$$
\varepsilon_j^n(2) \, = \, - \, \int_0^{\sqrt{\frac{\omega_0}{3 \, c_3}}} \,
\exp \, \Big( \, \mathbf{i} \, n \, \big( \omega \, (\delta+\mathbf{i} \, y) \, + \, c_3 \, (\delta+\mathbf{i} \, y)^{\, 3} \big)
\, - \, n \, c_4 \, (\delta+\mathbf{i} \, y)^{\, 4} \, - \, n \, (\delta+\mathbf{i} \, y)^{\, 5} \, \varphi(-\delta-\mathbf{i} \, y) \, \Big) \, \mathbf{i} \, {\rm d}y \, ,
$$
and
\begin{multline}
\label{prop3Htildejn}
\widetilde{\mathcal{H}}_j^{\, n} \, = \, \dfrac{1}{2\, \pi} \, \int_{- \, \delta}^\delta \, \exp \, \Big( \, \mathbf{i} \, n \, \Big( \omega \, \left( \mathbf{i} \, \sqrt{\frac{\omega_0}{3 \, c_3}}
+\theta \right) \, + \, c_3 \, \left( \mathbf{i} \, \sqrt{\frac{\omega_0}{3 \, c_3}} +\theta \right)^{\, 3} \Big) \\
- \, n \, c_4 \, \left( \mathbf{i} \, \sqrt{\frac{\omega_0}{3 \, c_3}} +\theta \right)^{\, 4} \, - \, n \, \left( \mathbf{i} \, \sqrt{\frac{\omega_0}{3 \, c_3}} +\theta \right)^{\, 5} \,
\varphi \left( -\, \mathbf{i} \, \sqrt{\frac{\omega_0}{3 \, c_3}} -\theta \right) \, \Big) \, {\rm d}\theta \, .
\end{multline}

Let us start with the remainder term $\varepsilon_j^n(1)$. As in the proof of Proposition \ref{prop2}, we expand the quantities within the integral and then
apply the triangle inequality to get:
$$
\big| \, \varepsilon_j^n(1) \, \big| \, \le \, {\rm e}^{- \, n \, c_4 \, \delta^{\, 4}} \, {\rm e}^{\, 16 \, n \, C_0 \, \delta^{\, 5}} \, \int_0^{\sqrt{\frac{\omega_0}{3 \, c_3}}} \,
{\rm e}^{- \, n \, \Big( (\omega \, + \, 3 \, c_3 \, \delta^{\, 2}) \, y \, - \, 6 \, c_4 \, \delta^{\, 2} \, y^{\, 2} \, - \, c_3 \, y^{\, 3} \, + \, c_4 \, y^{\, 4} \Big)} \,
{\rm e}^{\, 16 \, n \, C_0 \, y^{\, 5}} \, {\rm d}y \, .
$$
By using the restrictions \eqref{restrictionsprop2} on $\delta$ and $\omega_0$, we can still absorb the $\delta^{\, 5}$ and $y^{\, 5}$ terms to get:
\begin{align*}
\big| \, \varepsilon_j^n(1) \, \big| \, & \le \, {\rm e}^{- \, n \, \frac{c_4}{2} \, \delta^{\, 4}} \, \int_0^{\sqrt{\frac{\omega_0}{3 \, c_3}}} \,
{\rm e}^{- \, n \, \Big( (\omega \, + \, 3 \, c_3 \, \delta^{\, 2}) \, y \, - \, 6 \, c_4 \, \delta^{\, 2} \, y^{\, 2} \, - \, c_3 \, y^{\, 3} \, + \, \frac{c_4}{2} \, y^{\, 4} \Big)} \, {\rm d}y \\
& \le \, {\rm e}^{- \, n \, \frac{c_4}{2} \, \delta^{\, 4}} \, \int_0^{\sqrt{\frac{\omega_0}{3 \, c_3}}} \,
{\rm e}^{- \, n \, \Big( (\omega \, + \, 3 \, c_3 \, \delta^{\, 2}) \, y \, - \, 6 \, c_4 \, \delta^{\, 2} \, \sqrt{\frac{\omega_0}{3 \, c_3}} \, y \, - \, \frac{\omega_0}{3} \, y \Big)}
\, {\rm d}y \, .
\end{align*}
By using again \eqref{restrictionsprop2} and $\omega \ge \omega_0 \ge \omega_0/3$, we end up with:
$$
\big| \, \varepsilon_j^n(1) \, \big| \, \le \, {\rm e}^{- \, n \, \frac{c_4}{2} \, \delta^{\, 4}} \, \int_0^{\sqrt{\frac{\omega_0}{3 \, c_3}}} \,
{\rm e}^{- \, n \, \frac{3}{2} \, c_3 \, \delta^{\, 2} \, y} \, {\rm d}y \, \le \, C \, {\rm e}^{- \, c \, n} \, .
$$
The estimate of the other remainder term $\varepsilon_j^n(2)$ is similar and we still get:
\begin{equation}
\label{prop3estim1}
\Big| \, \Gjn \, - \, \widetilde{\mathcal{H}}_j^{\, n} \, \Big| \, \le \, C \, {\rm e}^{- \, c \, n} \, ,
\end{equation}
where $\widetilde{\mathcal{H}}_j^{\, n}$ is now given by \eqref{prop3Htildejn}.

We expand the various terms in \eqref{prop3Htildejn} and obtain the expression:
\begin{multline*}
\widetilde{\mathcal{H}}_j^{\, n} \, = \,
\dfrac{{\rm e}^{- \, n \, \left( \omega - \frac{\omega_0}{3} \right) \, \sqrt{\frac{\omega_0}{3 \, \, c_3}}} \, {\rm e}^{- \, \frac{c_4}{9 \, c_3^{\, 2}} \, n \, \omega_0^{\, 2}}}{2 \, \pi}
\, \int_{- \, \delta}^\delta \, {\rm e}^{\, \mathbf{i} \, \cdots} \, {\rm e}^{- \, n \, \Big( \sqrt{3 \, c_3 \, \omega_0} \, - \, \frac{2 \, c_4}{c_3} \, \omega_0 \Big) \, \theta^{\, 2}} \,
{\rm e}^{- \, n \, c_4 \, \theta^{\, 4}} \, \times \\
\exp \, \left( - \, n \, \left( \mathbf{i} \, \sqrt{\frac{\omega_0}{3 \, c_3}} +\theta \right)^{\, 5} \, \varphi \left( -\mathbf{i} \, \sqrt{\frac{\omega_0}{3 \, c_3}} -\theta \right) \, \right)
\, {\rm d}\theta \, .
\end{multline*}
We take the modulus on each side of the inequality and apply the already used H\"older inequality to absorb the final remainder, which yields:
$$
\Big| \, \widetilde{\mathcal{H}}_j^{\, n} \, \Big| \, \le \, \dfrac{{\rm e}^{- \, n \, \left( \omega - \frac{\omega_0}{3} \right) \, \sqrt{\frac{\omega_0}{3 \, \, c_3}}}}{2 \, \pi}
\, \int_{- \, \delta}^\delta \, {\rm e}^{- \, n \, \Big( \sqrt{3 \, c_3 \, \omega_0} \, - \, \frac{2 \, c_4}{c_3} \, \omega_0 \Big) \, \theta^{\, 2}} \, {\rm d}\theta \, .
$$
We now use $\omega \ge \omega_0$ as well as the restriction \eqref{restrictionsprop2} on $\omega_0$ to get:
$$
\Big| \, \widetilde{\mathcal{H}}_j^{\, n} \, \Big| \, \le \, \dfrac{C}{n^{\, 1/2} \, \omega_0^{\, 1/4}} \, \exp \Big( - \, c \, n \, \omega_0^{\, 3/2} \Big) \, ,
$$
where we recall that $\omega_0>0$ has been fixed. Combining with \eqref{prop3estim1}, we have thus obtained the uniform exponential bound:
$$
\Big| \, \Gjn \, \Big| \, \le \, C \, {\rm e}^{- \, c \, n} \, ,
$$
for $\omega \ge \omega_0$, and we can convert this bound into:
$$
\Big| \, \Gjn \, \Big| \, \le \, \dfrac{C}{n^{\, 1/2} \, \omega^{\, 1/4}} \, \exp \Big( - \, c \, n \, \omega^{\, 3/2} \Big) \, ,
$$
for $\omega \in [\omega_0,2\, M]$ where $\omega_0$ and $M$ have already been fixed. Going back to the definition of $\omega$, this gives the result
of Proposition \ref{prop3}.
\end{proof}

\subsection{Conclusion}

In this short paragraph, we explain why the above preliminary results imply the validity of \eqref{bound1}. If $n \ge 1$ and $n^{\, 1/3} \le j \, - \, \alpha \, n
\le 2 \, M \, n$, we use Propositions \ref{prop2} and \ref{prop3} to obtain the existence of positive constants $C_\sharp$ and $c_\sharp$ (independent of
$j$ and $n$) such that:
\begin{align*}
\big| \, \Gjn \, \big| \, &\le \, \dfrac{C_\sharp}{(j \, - \, \alpha \, n)^{\, 1/4} \, \, n^{\, 1/4}} \,
\exp \left( - \, c_\sharp \, \left( \dfrac{j \, - \, \alpha \, n}{n^{\, 1/3}} \right)^{3/2} \, \right) \\
&\le \, \dfrac{C_\sharp}{n^{\, 1/3}} \, \left( \dfrac{j \, - \, \alpha \, n}{n^{\, 1/3}} \right)^{\, - \, 1/4} \,
\exp \left( - \, c_\sharp \, \left( \dfrac{j \, - \, \alpha \, n}{n^{\, 1/3}} \right)^{3/2} \, \right) \, .
\end{align*}
This proves the validity of \eqref{bound1} for $n^{\, 1/3} \le j \, - \, \alpha \, n \le 2 \, M \, n$. The constants $C_\sharp$ and $c_\sharp$ are now fixed.

For $2 \, M \, n < j \, - \, \alpha \, n$, the validity of \eqref{bound1} is even more clear since $\Gjn$ is zero. It therefore remains to treat the case
$0 \le j \, - \, \alpha \, n \le n^{\, 1/3}$. We use Proposition \ref{prop1} to obtain:
$$
| \, \Gjn \, | \, \le \, \dfrac{C_\flat}{n^{\, 1/3}} \, ,
$$
for some other constant $C_\flat$ (possibly larger than the above constant $C_\sharp$), and \eqref{bound1} follows by using the inequalities:
$$
\dfrac{C_\flat}{n^{\, 1/3}} \, \le \, \dfrac{C_\flat \, {\rm e}^{\, c_\sharp}}{n^{\, 1/3}} \, {\rm e}^{\, - \, c_\sharp}
\, \le \, \dfrac{C_\flat \, {\rm e}^{\, c_\sharp}}{n^{\, 1/3}} \, \exp \left( - \, c_\sharp \, \left( \dfrac{j \, - \, \alpha \, n}{n^{\, 1/3}} \right)^{3/2} \, \right) \, ,
$$
for  $0 \le j \, - \, \alpha \, n \le n^{\, 1/3}$. The bound \eqref{bound1} follows by choosing $c \, := \, c_\sharp$ and $C \, := \, \max (C_\sharp,
C_\flat \, {\rm e}^{\, c_\sharp})$. We now turn to the case $j \, - \, \alpha \, n < 0$ which is where the oscillations in the Green's function will arise.

\section{Proof of the main result. II. The oscillations}
\label{section3}

\subsection{The oscillations}

We are now interested in the regime $\omega<0$ and start with the case where $\omega$ is small. This is the most difficult region where the Green's
function exhibits oscillations. This is also the unique region that is the cause for the $\ell^{\, \infty}$ instability phenomenon which we have recalled in
the introduction. As in Section \ref{section2}, the regime $-n^{-\, 2/3} \le \omega \le 0$ will de dealt with by Proposition \ref{prop1}, so we consider from
now on $\omega \le -n^{-\, 2/3}$, that is $j \, - \, \alpha \, n \le - \, n^{\, 1/3}$.

\begin{proposition}
\label{prop4}
Under Assumption \ref{hyp:1}, there exist $\omega_0>0$ and there exist two constants $C>0$ and $c>0$ such that for any $j \in \Z$ and $n \in \N^*$,
with $\GGjn$ defined as in \eqref{defprincipal}, there holds:
\begin{equation}
\label{estimprop4}
\big| \, \Gjn \, - \, \GGjn \, \big| \, \le \, \dfrac{C}{|j \, - \, \alpha \, n|} \, \exp \left( - \, c \, \left( \dfrac{|j \, - \, \alpha \, n|}{n^{\, 1/3}} \right)^{\, 3/2} \, \right) \, ,
\end{equation}
as long as $n \in \N^*$ and the parameter $\omega$ defined in \eqref{defomega} satisfy $-\omega_0 \le \omega \le -n^{\, -2/3}$ (hence $j \, - \, \alpha \, n \, < \, 0$).
\end{proposition}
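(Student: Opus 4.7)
The plan is to carry out a saddle point analysis analogous to that of Proposition \ref{prop2}, adapted to the case $\omega<0$ in which the two saddles of the truncated phase $\theta \mapsto in(\omega\theta + c_3\theta^3)$ now lie on the real axis at $\pm\theta_0$ with $\theta_0 := \sqrt{|\omega|/(3c_3)}$, rather than on the imaginary axis. Exactly as before, I would start from the Fourier representation \eqref{formuleGjn}, trim off the contribution from $[-\pi,-\delta] \cup [\delta,\pi]$ at the cost of an $O(e^{-cn})$ remainder using \eqref{hyp:dissipation}, and use \eqref{hyp:stabilite2} to rewrite the remaining integral over $[-\delta,\delta]$ as
\[
\widetilde{\mathcal{H}}_j^{\, n} = \dfrac{1}{2\pi}\int_{-\delta}^\delta \exp\!\Bigl(in(\omega\theta + c_3\theta^3) - nc_4\theta^4 - n\theta^5\varphi(-\theta)\Bigr)\,\mathrm{d}\theta.
\]
The constants $\omega_0$ and $\delta$ are fixed as in \eqref{restrictionsprop2}, with a small additional restriction $\sqrt{2\omega_0/(3c_3)} \le \delta_0$ so that all complex contours introduced below remain in the holomorphy square $[-\delta_0,\delta_0]^2$ of $\varphi$.

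Since $\psi''(\theta_0) = 6c_3\theta_0 > 0$ for $\psi(\theta) := \omega\theta + c_3\theta^3$, the steepest descent direction of $e^{in\psi}$ is $e^{i\pi/4}$ through $\theta_0$ and $e^{-i\pi/4}$ through $-\theta_0$. I would therefore deform $[-\delta,\delta]$ into a piecewise linear contour that leaves $-\delta$ vertically, meets $-\theta_0$ from direction $e^{i3\pi/4}$, crosses $-\theta_0$ along $e^{-i\pi/4}$ down into the lower half-plane, returns up along $e^{i5\pi/4}$ to $\theta_0$, crosses $\theta_0$ along $e^{i\pi/4}$, and reconnects vertically to $\delta$. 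By Cauchy's theorem, $\widetilde{\mathcal{H}}_j^{\, n}$ equals the sum of the two steepest-descent pieces and four connecting pieces; the connecting pieces yield $O(e^{-cn})$ contributions by the same $c_4\theta^4$-absorbs-everything argument used in Propositions \ref{prop2} and \ref{prop3}. On the segment $\theta(t) = \theta_0 + te^{i\pi/4}$ with $t\in[-\sqrt{2}\theta_0,\sqrt{2}\theta_0]$ (the endpoint $\sqrt{2}\theta_0$ is chosen precisely so that the segment reaches the midpoint $0$, which is the origin of the truncation at $\pm\sqrt{2|j-\alpha n|/(3c_3 n)}$ in \eqref{defprincipal}), expanding the exponent gives the constant phase $-i\,\frac{2n|\omega|^{3/2}}{3\sqrt{3c_3}}$ from $in\psi(\theta_0)$, the constant damping $-\frac{nc_4\omega^2}{9c_3^2}$ from $-nc_4\theta_0^4$, the quadratic Gaussian $e^{-n\sqrt{3c_3|\omega|}\,t^2}$ (from the Hessian after the $e^{i\pi/4}$-rotation), and a remainder $R(t)$ collecting the cubic-and-higher-order corrections together with the $\varphi$-term. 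Summing this contribution with its complex conjugate from $-\theta_0$ and dividing by $2\pi$ produces exactly $\GGjn$ up to the error from $R$.

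The main obstacle is extracting the sharp $1/|j-\alpha n|$ decay. The principal term $\GGjn$ has size of order $(n\sqrt{|\omega|})^{-1/2}$ in the regime of interest, and \eqref{estimprop4} requires a remainder smaller by a factor $(n|\omega|^{3/2})^{-1/2}$; indeed $(n\sqrt{|\omega|})^{-1/2}\cdot(n|\omega|^{3/2})^{-1/2} = (n|\omega|)^{-1} = |j-\alpha n|^{-1}$. This gain must come from the Taylor correction $R$: odd powers of $t$ integrate to $0$ by symmetry, while the even corrections (the square of the cubic term $inc_3 e^{i5\pi/4}t^3$, the cross terms between this cubic and the linear-in-$t$ contribution from $-nc_4\theta^4$ -- which reflects the slight displacement of the true saddle of the full phase away from $\theta_0$ by an amount of order $c_4|\omega|$ -- and the quartic part of $\varphi$) each gain enough inverse powers of $n\sqrt{|\omega|}$ when integrated against the quadratic Gaussian. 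Combined with the Gaussian decay factor $\exp(-cn|\omega|^{3/2}) = \exp(-c(|j-\alpha n|/n^{1/3})^{3/2})$, this produces \eqref{estimprop4}; the $O(e^{-cn})$ contributions from the far Fourier region and from the connecting contour segments are much smaller than the right hand side of \eqref{estimprop4}, so they do not affect the final estimate.
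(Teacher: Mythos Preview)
Your overall architecture --- steepest-descent through the real saddles $\pm\theta_0=\pm\sqrt{|\omega|/(3c_3)}$, with diagonal segments of slope $\mp\pi/4$ meeting in the lower half-plane --- is exactly the route the paper takes.  The contour you describe (vertical from $\pm\delta$, horizontal at height $i\theta_0$, then two diagonal segments meeting at $-i\theta_0$) coincides with Figure~\ref{fig:contour3}, modulo the small vertical shift $-i\frac{2c_4}{9c_3^2}|\omega|$ that the paper applies to the saddles in order to kill the linear-in-$t$ term produced by $-nc_4\theta^4$.

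There is, however, a genuine gap in your error budget: the assertion that ``the connecting pieces yield $O(e^{-cn})$ contributions by the same $c_4\theta^4$-absorbs-everything argument'' is false for the \emph{horizontal} segments at height $i\theta_0$.  On $\theta=x+i\theta_0$ with $x\in[-\delta,-2\theta_0]$ one computes
\[
\text{Re}\bigl(in\psi(x+i\theta_0)\bigr)=\tfrac{4}{3\sqrt{3c_3}}\,n|\omega|^{3/2}-n\sqrt{3c_3|\omega|}\,x^2,
\]
which at the right endpoint $x=-2\theta_0$ equals $-\tfrac{8}{3\sqrt{3c_3}}n|\omega|^{3/2}$; for $|\omega|\sim n^{-2/3}$ this is $O(1)$, not $-cn$.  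Moreover $\text{Re}(\theta^4)=x^4-6x^2\theta_0^2+\theta_0^4$ is \emph{negative} near $x=-2\theta_0$, so the $c_4$-term does not help there.  The paper handles these pieces (called $\varepsilon_j^n(3),\varepsilon_j^n(4)$) via the Gaussian tail inequality $\int_{-\infty}^{-X}e^{-ay^2}\,dy\le\frac{1}{2aX}e^{-aX^2}$ and obtains precisely $\frac{C}{n|\omega|}e^{-cn|\omega|^{3/2}}$: this is the \emph{dominant} error term and the sole source of the exponential $e^{-cn|\omega|^{3/2}}$ in \eqref{estimprop4}.

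This also explains a second inaccuracy in your sketch: the ``Gaussian decay factor $\exp(-cn|\omega|^{3/2})$'' does not appear in the saddle analysis.  At the saddle, $in\psi(\theta_0)$ is purely imaginary (it gives the oscillatory phase of $\GGjn$), and the only damping is $-nc_4\theta_0^4=-\frac{c_4}{9c_3^2}n\omega^2$, which is weaker.  The paper's careful reduction of the diagonal contributions $\mathcal{H}_{j,n}^\flat+\mathcal{H}_{j,n}^\sharp$ to $\GGjn$ (equations \eqref{oscillestim1}--\eqref{oscillestim9}) yields errors of order $\frac{C}{\sqrt n}e^{-cn\omega^2}+\frac{C}{n|\omega|}e^{-cn\omega^2}$, and it is only after combining with the horizontal-segment bound that \eqref{estimprop4} follows.
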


\begin{proof}
Following the proof of Proposition \ref{prop2}, we introduce a parameter $\delta  \in (0,\pi)$ to be fixed later on and split the quantity $\Gjn$ as:
$$
 \Gjn \, = \, \varepsilon_j^n \, + \,
\dfrac{1}{2\, \pi} \, \int_{- \, \delta}^\delta \,
{\rm e}^{\, \mathbf{i} \, n \, \omega \, \theta} \, \Big( {\rm e}^{\, \mathbf{i} \, \alpha \, \theta} \, \widehat{F}_a (- \, \theta) \Big)^n \, {\rm d}\theta \, ,
$$
with:
$$
\varepsilon_j^n \, := \, \dfrac{1}{2\, \pi} \, \int_{-\pi}^{- \, \delta} \,
{\rm e}^{\, \mathbf{i} \, n \, \omega \, \theta} \, \Big( {\rm e}^{\, \mathbf{i} \, \alpha \, \theta} \, \widehat{F}_a (- \, \theta) \Big)^n \, {\rm d}\theta \, + \,
\dfrac{1}{2\, \pi} \, \int_\delta^\pi \,
{\rm e}^{\, \mathbf{i} \, n \, \omega \, \theta} \, \Big( {\rm e}^{\, \mathbf{i} \, \alpha \, \theta} \, \widehat{F}_a (- \, \theta) \Big)^n \, {\rm d}\theta \, .
$$
For reasons that will be made clear in the following lines, we choose some parameters $\delta$ and $\omega_\star$ such that the following inequalities hold:
\begin{equation}
\label{restrictionsprop4}
16 \, C_0 \, \delta \, \le \, \dfrac{c_4}{2} \, ,\quad \delta \, \le \, \delta_0 \, ,\quad
\sqrt{\dfrac{\omega_\star}{3 \, c_3}} \, + \, \dfrac{2 \, c_4}{9 \, c_3^{\, 2}} \, \omega_\star \, \le \, \dfrac{\delta}{2} \, ,\quad
16 \, C_0 \, \sqrt{\dfrac{\omega_\star}{3 \, c_3}} \, \le \, \dfrac{c_4}{2} \, ,\quad 12 \, c_4 \, \sqrt{\omega_\star} \, \le \, (3 \, c_3)^{\, 3/2} \, .
\end{equation}
The parameter $\delta$ in the above decomposition of $\Gjn$ is fixed once and for all, and we consider $\omega \in [- \, \omega_\star,-\, n^{-\, 2/3}]$. We
shall further need to restrict the possible values of $\omega$ later on but the restrictions \eqref{restrictionsprop4} are a starting point for several terms that
arise below.

With the choice \eqref{restrictionsprop4} for $\delta$, we use Assumption \ref{hyp:1} to write $\Gjn$ as ($\omega$ is negative here):
\begin{equation}
\label{prop4decomposition}
\Gjn \, = \, \varepsilon_j^n \, + \,
\dfrac{1}{2\, \pi} \, \int_{- \, \delta}^\delta \, \exp \, \Big( \, \mathbf{i} \, n \, \big( - \, | \, \omega \, | \, \theta \, + \, c_3 \, \theta^{\, 3} \big) \, - \, n \, c_4 \, \theta^{\, 4} \,
- \, n \, \theta^{\, 5} \, \varphi(-\theta) \, \Big) \, {\rm d}\theta \, ,
\end{equation}
with
\begin{equation}
\label{prop4estim1}
\big| \, \varepsilon_j^n \, \big| \, \le \, C \, {\rm e}^{- \, c \, n} \, ,
\end{equation}
for suitable constants $C>0$ and $c>0$. For the integral on the right hand side of \eqref{prop4decomposition}, we use Cauchy's formula and choose the
contour depicted in Figure \ref{fig:contour3} which consists in:
\begin{itemize}
 \item A vertical segment from $-\delta$ to $-\delta +\mathbf{i} \, \sqrt{| \, \omega \, | / (3\, c_3)}$,
 \item A horizontal segment from $-\delta +\mathbf{i} \, \sqrt{| \, \omega \, | / (3\, c_3)}$ to $-2 \, \sqrt{| \, \omega \, | / (3\, c_3)} \, - \, (2\, c_4 / (9 \, c_3^{\, 2})) \,
 | \, \omega \, | +\mathbf{i} \, \sqrt{| \, \omega \, | / (3\, c_3)}$,
 \item A segment (with slope $-\pi/4$) from the point $-2 \, \sqrt{| \, \omega \, | / (3\, c_3)} \, - \, (2\, c_4 / (9 \, c_3^{\, 2})) \, | \, \omega \, |
 +\mathbf{i} \, \sqrt{| \, \omega \, | / (3\, c_3)}$ to $-\mathbf{i} \, (\sqrt{| \, \omega \, | / (3 \, c_3)} \, + \, (2 \, c_4 / (9 \, c_3^{\, 2})) \, | \, \omega \, |)$,
 \item A segment (with slope $\pi/4$) from the point $-\mathbf{i} \, (\sqrt{| \, \omega \, | / (3 \, c_3)} \, + \, (2 \, c_4 / (9 \, c_3^{\, 2})) \, | \, \omega \, |)$ to
 $2 \, \sqrt{| \, \omega \, | / (3\, c_3)} \, + \, (2\, c_4 / (9 \, c_3^{\, 2})) \, | \, \omega \, | +\mathbf{i} \, \sqrt{| \, \omega \, | / (3\, c_3)}$,
 \item A horizontal segment from $2 \, \sqrt{| \, \omega \, | / (3\, c_3)} \, + \, (2\, c_4 / (9 \, c_3^{\, 2})) \, | \, \omega \, | +\mathbf{i} \, \sqrt{| \, \omega \, | / (3\, c_3)}$
 to $\delta +\mathbf{i} \, \sqrt{| \, \omega \, | / (3\, c_3)}$,
 \item A final vertical segment from $\delta +\mathbf{i} \, \sqrt{| \, \omega \, | / (3\, c_3)}$ to $\delta$.
\end{itemize}

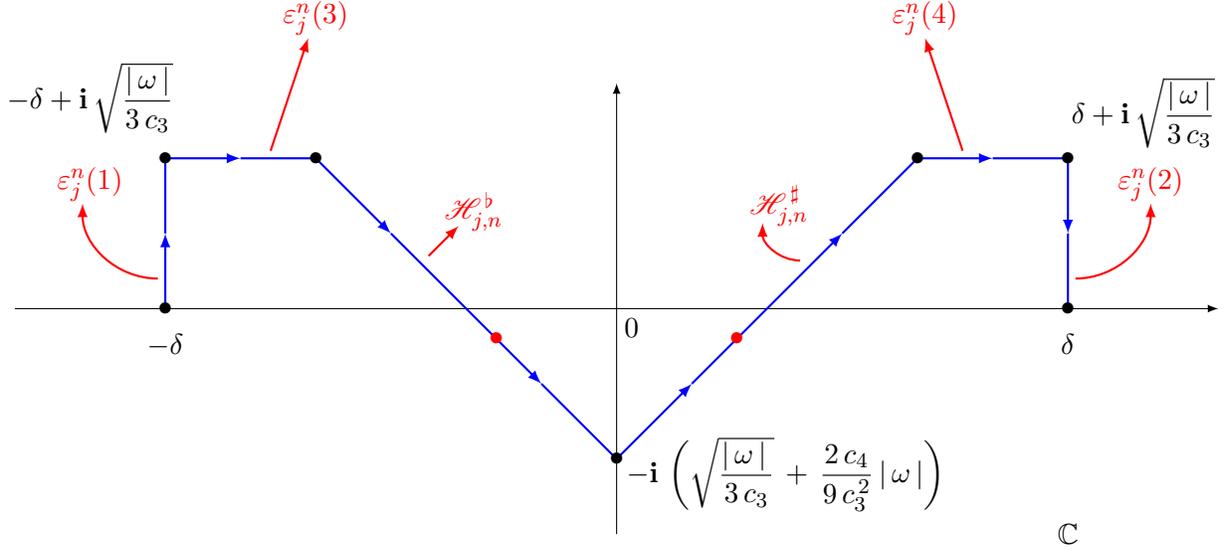
\begin{figure}[h!]
\begin{center}
\begin{tikzpicture}[scale=2,>=latex]
\draw[black,->] (-4,0) -- (4,0);
\draw[black,->] (0,-1.5)--(0,1.5);
\draw[thick,blue,->] (-3,0) -- (-3,0.5);
\draw[thick,blue] (-3,0.5) -- (-3,1);
\draw[thick,blue,->] (-3,1) -- (-2.5,1);
\draw[thick,blue] (-2.5,1) -- (-2,1);
\draw[thick,blue,->] (-2,1) -- (-1.5,0.5);
\draw[thick,blue] (-1.5,0.5) -- (-1,0);
\draw[thick,blue,->] (-1,0) -- (-0.5,-0.5);
\draw[thick,blue] (-0.5,-0.5) -- (0,-1);
\draw[thick,blue,->] (0,-1) -- (0.5,-0.5);
\draw[thick,blue] (0.5,-0.5) -- (1,0);
\draw[thick,blue,->] (1,0) -- (1.5,0.5);
\draw[thick,blue] (1.5,0.5) -- (2,1);
\draw[thick,blue,->] (2,1) -- (2.5,1);
\draw[thick,blue] (2.5,1) -- (3,1);
\draw[thick,blue,->] (3,1) -- (3,0.5);
\draw[thick,blue] (3,0.5) -- (3,0);
\draw (-3,-0.1) node[below]{$-\delta$};
\draw (3,-0.1) node[below]{$\delta$};
\draw (-3.5,1.1) node[above]{$-\delta+\mathbf{i} \, \sqrt{\dfrac{| \, \omega \, |}{3 \, c_3}}$};
\draw (3.5,1) node[above]{$\delta+\mathbf{i} \, \sqrt{\dfrac{| \, \omega \, |}{3 \, c_3}}$};
\draw (0,-1.1) node[right]{$-\mathbf{i} \, \left( \sqrt{\dfrac{| \, \omega \, | \, }{3 \, c_3}} \, + \, \dfrac{2 \, c_4}{9 \, c_3^{\, 2}} \, | \, \omega \, | \right)$};
\draw (0.1,0) node[below]{$0$};
\draw (3,-1.5) node {$\C$};
\node (centre) at (-3,1){$\bullet$};
\node (centre) at (3,1){$\bullet$};
\node (centre) at (-2,1){$\bullet$};
\node (centre) at (2,1){$\bullet$};
\node (centre) at (0,-1){$\bullet$};
\node[red] (centre) at (-0.8,-0.2){$\bullet$};
\node[red] (centre) at (0.8,-0.2){$\bullet$};
\node (centre) at (-3,0){$\bullet$};
\node (centre) at (3,0){$\bullet$};
\draw (-3.5,0.65) node[above]{{\color{red}$\varepsilon_j^n(1)$}};
\draw (3.55,0.65) node[above]{{\color{red}$\varepsilon_j^n(2)$}};
\draw (-2,1.75) node[above]{{\color{red}$\varepsilon_j^n(3)$}};
\draw (2.05,1.75) node[above]{{\color{red}$\varepsilon_j^n(4)$}};
\draw (-0.95,0.45) node[above]{{\color{red}$\mathcal{H}_{j,n}^{\, \flat}$}};
\draw (1.08,0.48) node[above]{{\color{red}$\mathcal{H}_{j,n}^{\, \sharp}$}};
\draw[thick,red,->] (-3.05,0.2) arc (270:180:0.5);
\draw[thick,red,->] (3.05,0.2) arc (270:360:0.5);
\draw[thick,red,->] (-2.3,1.05) -- (-2.05,1.8);
\draw[thick,red,->] (2.3,1.05) -- (2.05,1.8);
\draw[thick,red,->] (-1.25,0.35) -- (-1.05,0.55);
\draw[thick,red,->] (1.22,0.32) arc (270:180:0.25);
\end{tikzpicture}
\caption{The integration contour in the case $-\omega_0 \le \omega \le -n^{-2/3}$. The two red bullets represent the approximate saddle points and
the black bullets represent the end points of the six segments along which we compute the contributions $\varepsilon_j^n(1)$, $\varepsilon_j^n(3)$,
$\mathcal{H}_{j,n}^{\, \flat}$, $\mathcal{H}_{j,n}^{\, \sharp}$, $\varepsilon_j^n(4)$ and $\varepsilon_j^n(2)$.}
\label{fig:contour3}
\end{center}
\end{figure}

Thanks to the restrictions \eqref{restrictionsprop4}, this contour is included in the closed square $[-\delta_0,\delta_0] \times [-\delta_0,\delta_0]$ on
which $\varphi$ is a holomorphic function and we can bound its modulus by $C_0$. According to our choice of contour, we decompose $\Gjn$ in
\eqref{prop4decomposition} as:
\begin{equation}
\label{prop4decomposition'}
\Gjn \, = \, \varepsilon_j^n \, + \, \varepsilon_j^n(1) \, + \, \varepsilon_j^n(3) \, + \, \mathcal{H}_{j,n}^{\, \flat} \, + \, \mathcal{H}_{j,n}^{\, \sharp} \, + \,
\varepsilon_j^n(4) \, + \, \varepsilon_j^n(2) \, ,
\end{equation}
where the six contributions $\varepsilon_j^n(1),\varepsilon_j^n(3),\mathcal{H}_{j,n}^{\, \flat},\mathcal{H}_{j,n}^{\, \sharp},\varepsilon_j^n(4),\varepsilon_j^n(2)$
correspond to the integral of the holomorphic function:
$$
\theta \, \longmapsto \, \dfrac{1}{2\, \pi} \, \exp \,
\Big( \, \mathbf{i} \, n \, \big( - \, | \, \omega \, | \, \theta \, + \, c_3 \, \theta^{\, 3} \big) \, - \, n \, c_4 \, \theta^{\, 4} \, - \, n \, \theta^{\, 5} \, \varphi(-\theta) \, \Big) \, ,
$$
along each of the six segments that make the above defined contour (we refer to Firgure \ref{fig:contour3} for an illustration).

Let us start with the contribution $\varepsilon_j^n(1)$ that corresponds to the integral along the first vertical segment:
\begin{multline*}
\varepsilon_j^n(1) \, = \, \int_0^{\sqrt{\frac{| \, \omega \, |}{3 \, c_3}}} \,
\exp \, \Big( \, \mathbf{i} \, n \, \big( - \, | \, \omega \, | \, (-\delta+\mathbf{i} \, y) \, + \, c_3 \, (-\delta+\mathbf{i} \, y)^{\, 3} \big) \\
- \, n \, c_4 \, (-\delta+\mathbf{i} \, y)^{\, 4} \, - \, n \, (-\delta+\mathbf{i} \, y)^{\, 5} \, \varphi(\delta-\mathbf{i} \, y) \, \Big) \, \mathbf{i} \, {\rm d}y \, .
\end{multline*}
Expanding the terms within the integral, we get:
$$
\varepsilon_j^n(1) \, = \, {\rm e}^{- \, n \, c_4 \, \delta^{\, 4}} \, \int_0^{\sqrt{\frac{| \, \omega \, |}{3 \, c_3}}} \, {\rm e}^{\, \mathbf{i} \, \cdots} \, \,
{\rm e}^{- \, n \, \Big( (- \, | \, \omega \, | \, + \, 3 \, c_3 \, \delta^{\, 2}) \, y \, - \, 6 \, c_4 \, \delta^{\, 2} \, y^{\, 2} \, - \, c_3 \, y^{\, 3} \, + \, c_4 \, y^{\, 4} \Big)} \,
{\rm e}^{\, - \, n \, (-\delta+\mathbf{i} \, y)^{\, 5} \, \varphi(\delta-\mathbf{i} \, y)} \, {\rm d}y \, .
$$
We take the modulus on each side of the equality and apply the triangle inequality as well as the (already used) H\"older inequality for the remainder
term to obtain (the arguments here are the same as in the proof of Proposition \ref{prop2}):
$$
\big| \, \varepsilon_j^n(1) \, \big| \, \le \, {\rm e}^{- \, n \, \frac{c_4}{2} \, \delta^{\, 4}} \, \int_0^{\sqrt{\frac{| \, \omega \, |}{3 \, c_3}}} \,
{\rm e}^{- \, n \, \Big( (-\, | \, \omega \, | \, + \, 3 \, c_3 \, \delta^{\, 2}) \, y \, - \, 6 \, c_4 \, \delta^{\, 2} \, y^{\, 2} \, - \, c_3 \, y^{\, 3} \Big)} \, {\rm d}y \, .
$$
Let us now note that on the interval $[0,\sqrt{| \, \omega \, |/(3 \, c_3)}]$, we have:
$$
- \, c_3 \, y^{\, 3} \, \ge \, - \, \dfrac{| \, \omega \, |}{3} \, y \, ,
$$
and we also use the inequality (see \eqref{restrictionsprop4} and use $| \, \omega \, | \le \omega_\star$):
$$
\dfrac{4 \, | \, \omega \, |}{3} \, \le \, c_3 \, \delta^{\, 2} \, ,
$$
to get:
\begin{equation}
\label{prop4estim2}
\big| \, \varepsilon_j^n(1) \, \big| \, \le \, {\rm e}^{- \, n \, \frac{c_4}{2} \, \delta^{\, 4}} \, \int_0^{\sqrt{\frac{| \, \omega \, |}{3 \, c_3}}} \,
{\rm e}^{- \, n \, (\frac{-4 \, | \, \omega \, |}{3} \, + \, \frac{3}{2} \, c_3 \, \delta^{\, 2}) \, y} \, {\rm d}y \, \le \,
{\rm e}^{- \, n \, \frac{c_4}{2} \, \delta^{\, 4}} \, \int_0^{+ \, \infty} \, {\rm e}^{- \, n \, \frac{c_3}{2} \, \delta^{\, 2} \, y} \, {\rm d}y \, \le \, C \, {\rm e}^{- \, c \, n} \, ,
\end{equation}
for suitable constants $C>0$ and $c>0$ that do not depend on $\omega$ and $n$. The estimate of the integral $\varepsilon_j^n(2)$ along the final vertical
segment is entirely similar so we can collect \eqref{prop4estim1} and \eqref{prop4estim2} to obtain:
\begin{equation}
\label{prop4estim3}
\big| \, \varepsilon_j^n \, + \, \varepsilon_j^n(1) \, + \, \varepsilon_j^n(2) \, \big| \, \le \, C \, {\rm e}^{- \, c \, n} \, .
\end{equation}

We now turn to the contribution $\varepsilon_j^n(3)$ which corresponds to the integral along the horizontal segment from $-\delta +\mathbf{i} \, \sqrt{| \, \omega \, | / (3\, c_3)}$
to $-2 \, \sqrt{| \, \omega \, | / (3\, c_3)} \, - \, (2\, c_4 / (9 \, c_3^{\, 2})) \, | \, \omega \, | +\mathbf{i} \, \sqrt{| \, \omega \, | / (3\, c_3)}$. We compute:
\begin{multline*}
\varepsilon_j^n(3) \, = \, \dfrac{1}{2\, \pi} \, \int_{- \, \delta}^{\Xi(\omega)} \, \exp \, \left( \, \mathbf{i} \, n \, \left( - \, | \, \omega \, | \,
\left( \mathbf{i} \, \sqrt{\frac{| \, \omega \, |}{3 \, c_3}} +\theta \right) \, + \, c_3 \, \left( \mathbf{i} \, \sqrt{\frac{| \, \omega \, |}{3 \, c_3}} +\theta \right)^{\, 3} \right) \right. \\
\left. - \, n \, c_4 \, \left( \mathbf{i} \, \sqrt{\frac{| \, \omega \, |}{3 \, c_3}} +\theta \right)^{\, 4} \, - \, n \, \left( \mathbf{i} \, \sqrt{\frac{| \, \omega \, |}{3 \, c_3}} +\theta \right)^{\, 5} \,
\varphi \left( -\, \mathbf{i} \, \sqrt{\frac{| \, \omega \, |}{3 \, c_3}} -\theta \right) \, \right) \, {\rm d}\theta \, ,
\end{multline*}
where the upper bound $\Xi(\omega)$ in the integral is defined as:
$$
\Xi(\omega) \, := \, -2 \, \sqrt{\dfrac{| \, \omega \, |}{3\, c_3}} \, - \, \dfrac{2\, c_4}{9 \, c_3^{\, 2}} \, | \, \omega \, | \, .
$$
We collect the real and purely imaginary contributions in the exponential functions of the expression for $\varepsilon_j^n(3)$, which yields:
\begin{multline*}
\varepsilon_j^n(3) \, = \, \dfrac{{\rm e}^{\frac{4}{3 \, \sqrt{3 \, c_3}} \, n \, | \, \omega \, |^{\, 3/2}} \, {\rm e}^{- \, \frac{c_4}{9 \, c_3^{\, 2}} \, n \, \omega^{\, 2}}}{2 \, \pi}
\, \int_{- \, \delta}^{\Xi(\omega)} \, {\rm e}^{\, \mathbf{i} \, \cdots} \, {\rm e}^{- \, n \, \Big( \sqrt{3 \, c_3 \, | \, \omega \, |} \, - \, \frac{2 \, c_4}{c_3} \, | \, \omega \, | \Big) \, \theta^{\, 2}} \,
{\rm e}^{- \, n \, c_4 \, \theta^{\, 4}} \, \times \\
\exp \, \left( - \, n \, \left( \mathbf{i} \, \sqrt{\frac{| \, \omega \, |}{3 \, c_3}} +\theta \right)^{\, 5} \, \varphi \left( -\mathbf{i} \, \sqrt{\frac{| \, \omega \, |}{3 \, c_3}} -\theta \right) \, \right)
\, {\rm d}\theta \, .
\end{multline*}
The situation is less favorable than what occurred in the proof of Proposition \ref{prop2} because we have a large factor in front of the integral (since $\omega$
is small, the $| \, \omega \, |^{\, 3/2}$ term is dominant with respect to the second term which scales as $\omega^{\, 2}$). Applying the triangle inequality as well
as H\"older's inequality, we use \eqref{restrictionsprop4} and get:
$$
\big| \, \varepsilon_j^n(3) \, \big| \, \le \, \dfrac{{\rm e}^{\frac{4}{3 \, \sqrt{3 \, c_3}} \, n \, | \, \omega \, |^{\, 3/2}} \, {\rm e}^{- \, \frac{c_4}{18 \, c_3^{\, 2}} \, n \, \omega^{\, 2}}}{2 \, \pi}
\, \int_{- \, \delta}^{\Xi(\omega)} \, {\rm e}^{- \, n \, \Big( \sqrt{3 \, c_3 \, | \, \omega \, |} \, - \, \frac{2 \, c_4}{c_3} \, | \, \omega \, | \Big) \, \theta^{\, 2}} \, {\rm d}\theta \, .
$$
Using \eqref{restrictionsprop4} again, we have:
$$
\dfrac{2 \, c_4}{c_3} \, | \, \omega \, | \, \le \, \dfrac{1}{2} \, \sqrt{3 \, c_3 \, | \, \omega \, |} \, ,
$$
and we thus obtain:
\begin{align*}
\big| \, \varepsilon_j^n(3) \, \big| \, &\le \, \dfrac{{\rm e}^{\frac{4}{3 \, \sqrt{3 \, c_3}} \, n \, | \, \omega \, |^{\, 3/2}} \, {\rm e}^{- \, \frac{c_4}{18 \, c_3^{\, 2}} \, n \, \omega^{\, 2}}}{2 \, \pi}
\, \int_{- \, \delta}^{\Xi(\omega)} \, {\rm e}^{- \, \frac{n}{2} \, \sqrt{3 \, c_3 \, | \, \omega \, |} \, \theta^{\, 2}} \, {\rm d}\theta \\
&\le \, \dfrac{{\rm e}^{\frac{4}{3 \, \sqrt{3 \, c_3}} \, n \, | \, \omega \, |^{\, 3/2}} \, {\rm e}^{- \, \frac{c_4}{18 \, c_3^{\, 2}} \, n \, \omega^{\, 2}}}{2 \, \pi}
\, \int_{- \, \infty}^{- \, 2 \, \sqrt{\frac{| \, \omega \, |}{3\, c_3}}} \, {\rm e}^{- \, \frac{n}{2} \, \sqrt{3 \, c_3 \, | \, \omega \, |} \, \theta^{\, 2}} \, {\rm d}\theta \, ,
\end{align*}
where we used:
$$
\Xi(\omega) \, \le \, - \, 2 \, \sqrt{\dfrac{| \, \omega \, |}{3\, c_3}} \, .
$$
We now use the following inequality which is valid for any couple $(a,X)$ of positive numbers:
$$
\int_{-\, \infty}^{-\, X} \, {\rm e}^{- \, a \, y^{\, 2}} \, {\rm d}y \, \le \, \dfrac{1}{2\, a \, X} \, {\rm e}^{- \, a \, X^{\, 2}} \, ,
$$
and obtain:
$$
\big| \, \varepsilon_j^n(3) \, \big| \, \le \,
\dfrac{{\rm e}^{- \, \frac{2}{3 \, \sqrt{3 \, c_3}} \, n \, | \, \omega \, |^{\, 3/2}} \, {\rm e}^{- \, \frac{c_4}{18 \, c_3^{\, 2}} \, n \, \omega^{\, 2}}}{4 \, \pi \, n \, | \, \omega \, |}
\, \le \, \dfrac{{\rm e}^{- \, \frac{2}{3 \, \sqrt{3 \, c_3}} \, n \, | \, \omega \, |^{\, 3/2}}}{4 \, \pi \, n \, | \, \omega \, |} \, .
$$
The estimate of the contribution $\varepsilon_j^n(4)$ is entirely similar and we have thus obtained, for suitable constants $C$ and $c$:
\begin{equation}
\label{prop4estim4}
\big| \, \varepsilon_j^n(3) \, + \, \varepsilon_j^n(4) \, \big| \, \le \, \dfrac{C}{n \, | \, \omega \, |} \, {\rm e}^{- \, c \, n \, | \, \omega \, |^{\, 3/2}} \, .
\end{equation}
Comparing with the estimate in \eqref{prop4estim3}, we see that the upper bound ${\rm e}^{- \, c \, n}$ in \eqref{prop4estim3} is always smaller
than the right hand side in \eqref{prop4estim4} (up to choosing appropriate constants $C$ and $c$). We can thus add \eqref{prop4estim3} and
\eqref{prop4estim4} and obtain, for new constants $C$ and $c$:
\begin{equation}
\label{prop4estim5}
\big| \, \varepsilon_j^n \, + \, \varepsilon_j^n(1) \, + \, \varepsilon_j^n(2) \, + \, \varepsilon_j^n(3) \, + \, \varepsilon_j^n(4) \, \big|
\, \le \, \dfrac{C}{n \, | \, \omega \, |} \, {\rm e}^{- \, c \, n \, | \, \omega \, |^{\, 3/2}} \, ,
\end{equation}
for $-\omega_\star \le \omega \le -n^{-2/3}$ and $\omega_\star$ satisfying \eqref{restrictionsprop4}.
\bigskip

We now focus on the integrals $\mathcal{H}_{j,n}^{\, \flat}$ and $\mathcal{H}_{j,n}^{\, \sharp}$ which correspond to the leading contributions arising from
the segments that pass through the (approximate) saddle points. These contributions will give rise to the term $\GGjn$ defined in \eqref{defprincipal}. In
what follows, we shall further need to restrict the possible values of $\omega$ since the condition $|\omega| \le \omega_\star$ will not be sufficient to
absorb several terms that arise during the analysis.

Let us start with $\mathcal{H}_{j,n}^{\, \flat}$. We parametrize the corresponding segment by:
\begin{equation}
\label{parametrisation}
t \in \left[ \, - \, \sqrt{2} \, \left( \sqrt{\dfrac{| \, \omega \, |}{3\, c_3}} \, + \, \dfrac{2\, c_4}{9 \, c_3^{\, 2}} \, | \, \omega \, | \right) \, , \,
\sqrt{2} \, \sqrt{\dfrac{| \, \omega \, |}{3\, c_3}} \, \right] \, \longmapsto \, \Theta(t) \, := \,
- \, \sqrt{\dfrac{| \, \omega \, |}{3\, c_3}} \, - \, \mathbf{i} \, \dfrac{2\, c_4}{9 \, c_3^{\, 2}} \, | \, \omega \, | \, + \, t \, {\rm e}^{\, - \, \mathbf{i} \, \pi / 4} \, ,
\end{equation}
and we therefore obtain the expression:
$$
\mathcal{H}_{j,n}^{\, \flat} \, = \, \dfrac{{\rm e}^{\, - \, \mathbf{i} \, \pi / 4}}{2 \, \pi} \, \int_{t_{\rm min}}^{t_{\rm max}} \,
\exp \, \Big( \, \mathbf{i} \, n \, \big( - \, | \, \omega \, | \, \Theta(t) \, + \, c_3 \, \Theta(t)^{\, 3} - \, n \, c_4 \, \Theta(t)^{\, 4} \, - \,
n \, \Theta(t)^{\, 5} \, \varphi(- \Theta(t)) \, \Big) \, {\rm d}t \, ,
$$
where $\Theta(t)$ is given in \eqref{parametrisation} and the interval bounds $t_{\rm min},t_{\rm max}$ correspond to the end points of the interval
in \eqref{parametrisation}:
\begin{equation}
\label{tmintmax}
t_{\rm min} \, := \, - \, \sqrt{2} \, \left( \sqrt{\dfrac{| \, \omega \, |}{3\, c_3}} \, + \, \dfrac{2\, c_4}{9 \, c_3^{\, 2}} \, | \, \omega \, | \right) \quad
t_{\rm max} \, := \, \sqrt{2} \, \sqrt{\dfrac{| \, \omega \, |}{3\, c_3}} \, .
\end{equation}
We use the expression of $\Theta(t)$ in \eqref{parametrisation} to compute:
$$
- \, \mathbf{i} \, | \, \omega \, | \, \Theta(t) \, + \, \mathbf{i} \, c_3 \, \Theta(t)^{\, 3} - \, c_4 \, \Theta(t)^{\, 4} \, = \,
p_0(\omega) \, + \, p_1(\omega) \, t \, + \, \cdots \, + \, p_4(\omega) \, t^{\, 4} \, ,
$$
where the complex valued functions $p_0,\dots,p_4$ depend on $\omega$ only, are continuous on $\R$ and satisfy the following expansions as the
real variable $\omega$ tends to zero:
\begin{subequations}
\label{propertiespj}
\begin{align}
\text{\rm Re } \, p_0(\omega) \, &= \, - \, \dfrac{c_4}{9 \, c_3^{\, 2}} \, \omega^{\, 2} \, + \, O(| \, \omega \, |^{\, 3}) \, ,\label{partiereellep0} \\
\text{\rm Im } \, p_0(\omega) \, &= \, \dfrac{2}{3 \, \sqrt{3 \, c_3}} \, | \, \omega \, |^{\, 3/2} \, + \, O(| \, \omega \, |^{\, 5/2}) \, ,\label{partieimagp0} \\
p_1(\omega) \, &= \, O(\omega^{\, 2}) \, ,\label{p1} \\
\text{\rm Re } \, p_2(\omega) \, &= \, - \, \sqrt{3 \, c_3 \, | \, \omega \, |} \, + \, O(| \, \omega \, |^{\, 3/2}) \, ,\label{partiereellep2} \\
\text{\rm Im } \, p_2(\omega) \, &= \, O(| \, \omega \, |) \, ,\label{partieimagp2} \\
p_3(\omega) \, &= \, c_3 \, {\rm e}^{\, - \, \mathbf{i} \, \pi / 4} \, + \, O(| \, \omega \, |^{\, 1/2}) \, ,\label{p3} \\
p_4(\omega) \, &= \, c_4 \, .\label{p4}
\end{align}
\end{subequations}
At this stage, we have written $\mathcal{H}_{j,n}^{\, \flat}$ under the form:
\begin{equation}
\label{expression1Hjn}
\mathcal{H}_{j,n}^{\, \flat} \, = \, \dfrac{{\rm e}^{\, - \, \mathbf{i} \, \pi / 4 \, + \, n \, p_0(\omega)}}{2 \, \pi} \, \int_{t_{\rm min}}^{t_{\rm max}}
\, {\rm e}^{\, n \, \big( \, p_1(\omega) \, t \, + \, \cdots \, + \, p_4(\omega) \, t^{\, 4} \, \big) \, - \, n \, \Theta(t)^{\, 5} \, \varphi(-\Theta(t))} \, {\rm d}t \, .
\end{equation}
In what follows, we use the expansions \eqref{propertiespj} to simplify the expression of $\mathcal{H}_{j,n}^{\, \flat}$ by getting rid of the terms
in the exponential that contribute for remainders only. It is important to understand that here, a remainder is meant to be a quantity $\upsilon_{j,n}$
that is uniformly (in $n$) summable with respect to $j$. Up to now, we have assumed that $\omega$ is negative and satisfies $n^{\, -2/3} \le
| \, \omega \, | \le \omega_\star$ where $\omega_\star>0$ has been fixed in order to meet the restrictions \eqref{restrictionsprop4}. The further
restrictions on $\omega$ will help us absorb several terms. Namely, with $\omega_\star$ fixed as in \eqref{restrictionsprop4}, there exists a
constant $C>0$ such that, for any $\omega \in [-\omega_\star,0)$, there holds (see \eqref{propertiespj}):
$$
\dfrac{| \, p_1(\omega) \, |}{\omega^{\, 2}} \, + \, \dfrac{\big| \, \text{\rm Re } \, p_2(\omega) \, + \, \sqrt{3 \, c_3 \, | \, \omega \, |} \, \big|}{| \, \omega \, |^{\, 3/2}}
\, + \, \dfrac{| \, \text{\rm Im } \, p_2(\omega) \, |}{| \, \omega \, |} \, + \,
\dfrac{\big| \, p_3(\omega) \, - \, c_3 \, {\rm e}^{\, - \, \mathbf{i} \, \pi / 4} \, \big|}{| \, \omega \, |^{\, 1/2}} \, \le \, C \, .
$$

Using the inequality:
$$
\forall \, z \in \C \, ,\quad | \, {\rm e}^{\, z} \, - \, 1 \, | \, \le \, | \, z \, | \, {\rm e}^{\, | \, z \, |} \, ,
$$
as well as the estimate (see \eqref{parametrisation} and \eqref{tmintmax}):
$$
| \, t_{\rm min} \, | \, + \, | \, t_{\rm max} \, |  \, + \, | \, \Theta(t) \, | \, \le \, C \, | \, \omega \, |^{\, 1/2} \, ,
$$
for a suitable constant $C>0$ that does not depend on $\omega \in [-\omega_\star,0)$, we obtain the estimate (for another suitable constant $C$
independent of $\omega$ and $n$, at least for $| \, \omega \, | \le \omega_\star$):
\begin{multline}
\label{oscillestim1}
\left| \, \mathcal{H}_{j,n}^{\, \flat} \, - \, \dfrac{{\rm e}^{\, - \, \mathbf{i} \, \pi / 4 \, + \, n \, p_0(\omega)}}{2 \, \pi} \, \int_{t_{\rm min}}^{t_{\rm max}}
\, {\rm e}^{\, - \, n \, \sqrt{3 \, c_3 \, | \, \omega \, |} \, t^{\, 2} \, + \, n \, c_3 \, {\rm e}^{\, - \, \mathbf{i} \, \pi / 4} \, t^{\, 3}} \, {\rm d}t \, \right| \\
\le \, C \, {\rm e}^{\, n \, \text{\rm Re} \, p_0(\omega) \, + \, C \, n \, | \, \omega \, |^{\, 5/2}} \, \int_{t_{\rm min}}^{t_{\rm max}}
{\rm e}^{\, - \, n \, \sqrt{3 \, c_3 \, | \, \omega \, |} \, t^{\, 2} \, + \, n \, \big( c_3 /\sqrt{2} \big) \, t^{\, 3}} \, \Big( n \, | \, \omega \, | \, t^{\, 2} \, + \, n \, | \, \omega \, |^{\, 5/2} \Big) \,
{\rm e}^{\, C \, n \, | \, \omega \, | \, t^{\, 2}} \, {\rm d}t \, .
\end{multline}

The crucial observation is now the following. In the integral on the right hand side of \eqref{oscillestim1}, either $t$ is negative and $t^{\, 3}$ is also negative,
so we have:
$$
- \, n \, \sqrt{3 \, c_3 \, | \, \omega \, |} \, t^{\, 2} \, + \, n \, \big( c_3 /\sqrt{2} \big) \, t^{\, 3} \, \le \, - \, n \, \sqrt{3 \, c_3 \, | \, \omega \, |} \, t^{\, 2} \, .
$$
Or $t$ is nonnegative and we have:
$$
- \, n \, \sqrt{3 \, c_3 \, | \, \omega \, |} \, t^{\, 2} \, + \, n \, \big( c_3 /\sqrt{2} \big) \, t^{\, 3} \, \le \, - \, n \, \sqrt{3 \, c_3 \, | \, \omega \, |} \, t^{\, 2}
\, + \, n \, \big( c_3 /\sqrt{2} \big) \, t_{\rm max} \, t^{\, 2} \, = \, - \, \dfrac{2 \, n}{3} \, \sqrt{3 \, c_3 \, | \, \omega \, |} \, t^{\, 2} \, .
$$
In both cases, we can use a bound of the form:
$$
{\rm e}^{\, - \, n \, \sqrt{3 \, c_3 \, | \, \omega \, |} \, t^{\, 2} \, + \, n \, \big( c_3 /\sqrt{2} \big) \, t^{\, 3}} \, \le \,
{\rm e}^{\, - \, c \, n \, | \, \omega \, |^{\, 1/2} \, t^{\, 2}} \, ,
$$
for a suitable constant $c>0$. Going back to \eqref{oscillestim1}, we can thus choose $\omega_0>0$ such that, for any $\omega \in [-\, \omega_0,0)$, there holds:
\begin{multline*}
\left| \, \mathcal{H}_{j,n}^{\, \flat} \, - \, \dfrac{{\rm e}^{\, - \, \mathbf{i} \, \pi / 4 \, + \, n \, p_0(\omega)}}{2 \, \pi} \, \int_{t_{\rm min}}^{t_{\rm max}}
\, {\rm e}^{\, - \, n \, \sqrt{3 \, c_3 \, | \, \omega \, |} \, t^{\, 2} \, + \, n \, c_3 \, {\rm e}^{\, - \, \mathbf{i} \, \pi / 4} \, t^{\, 3}} \, {\rm d}t \, \right| \\
\le \, C \, {\rm e}^{\, n \, \text{\rm Re} \, p_0(\omega) \, + \, C \, n \, | \, \omega \, |^{\, 5/2}} \, \int_{t_{\rm min}}^{t_{\rm max}}
{\rm e}^{\, - \, c \, n \, | \, \omega \, |^{\, 1/2} \, t^{\, 2}} \, \Big( n \, | \, \omega \, | \, t^{\, 2} \, + \, n \, | \, \omega \, |^{\, 5/2} \Big) \, {\rm d}t \, .
\end{multline*}
Using \eqref{partiereellep0}, we find that the real part of $p_0(\omega)$ can absorb the $O(| \, \omega \, |^{\, 5/2})$ remainder term, and we are eventually led
to the estimate:
\begin{multline*}
\left| \, \mathcal{H}_{j,n}^{\, \flat} \, - \, \dfrac{{\rm e}^{\, - \, \mathbf{i} \, \pi / 4 \, + \, n \, p_0(\omega)}}{2 \, \pi} \, \int_{t_{\rm min}}^{t_{\rm max}}
\, {\rm e}^{\, - \, n \, \sqrt{3 \, c_3 \, | \, \omega \, |} \, t^{\, 2} \, + \, n \, c_3 \, {\rm e}^{\, - \, \mathbf{i} \, \pi / 4} \, t^{\, 3}} \, {\rm d}t \, \right| \\
\le \, C \, {\rm e}^{\, - \, c \, n \, \omega^{\, 2}} \, \int_{t_{\rm min}}^{t_{\rm max}} {\rm e}^{\, - \, c \, n \, | \, \omega \, |^{\, 1/2} \, t^{\, 2}} \,
\Big( n \, | \, \omega \, | \, t^{\, 2} \, + \, n \, | \, \omega \, |^{\, 5/2} \Big) \, {\rm d}t \, .
\end{multline*}
It now remains to compute the integral on the right hand side and we obtain our first main simplification:
\begin{multline}
\label{oscillestim3}
\left| \, \mathcal{H}_{j,n}^{\, \flat} \, - \, \dfrac{{\rm e}^{\, - \, \mathbf{i} \, \pi / 4 \, + \, n \, p_0(\omega)}}{2 \, \pi} \, \int_{t_{\rm min}}^{t_{\rm max}}
\, {\rm e}^{\, - \, n \, \sqrt{3 \, c_3 \, | \, \omega \, |} \, t^{\, 2} \, + \, n \, c_3 \, {\rm e}^{\, - \, \mathbf{i} \, \pi / 4} \, t^{\, 3}} \, {\rm d}t \, \right| \\
\le \, C \, {\rm e}^{\, - \, c \, n \, \omega^{\, 2}} \, \left( \dfrac{1}{\sqrt{n}} \, | \, \omega \, |^{\, 1/4} \, + \, \sqrt{n} \, | \, \omega \, |^{\, 9/4} \right)
\, \le \, \dfrac{C}{\sqrt{n}} \, {\rm e}^{\, - \, c \, n \, \omega^{\, 2}} \, \left( 1 \, + \, n \, | \, \omega \, |^{\, 2} \right) \, \le \,
\dfrac{C}{\sqrt{n}} \, {\rm e}^{\, - \, c \, n \, \omega^{\, 2}} \, .
\end{multline}

We now simplify the term $p_0(\omega)$ in the left hand side of \eqref{oscillestim3} by using \eqref{partiereellep0} and \eqref{partieimagp0}.
Namely, by using the triangle inequality and \eqref{oscillestim3}, we get:
\begin{multline*}
\left| \, \mathcal{H}_{j,n}^{\, \flat} \, - \, \dfrac{{\rm e}^{\, - n \, \frac{c_4}{9 \, c_3^{\, 2}} \, \omega^{\, 2}} \,
{\rm e}^{\, \mathbf{i} \, n \, \frac{2}{3 \, \sqrt{3 \, c_3}} \, | \, \omega \, |^{\, 3/2} \, - \, \mathbf{i} \, \pi / 4}}{2 \, \pi} \, \int_{t_{\rm min}}^{t_{\rm max}}
\, {\rm e}^{\, - \, n \, \sqrt{3 \, c_3 \, | \, \omega \, |} \, t^{\, 2} \, + \, n \, c_3 \, {\rm e}^{\, - \, \mathbf{i} \, \pi / 4} \, t^{\, 3}} \, {\rm d}t \, \right| \\
\le \, \dfrac{C}{\sqrt{n}} \, {\rm e}^{\, - \, c \, n \, \omega^{\, 2}} \, + \, C \, n \, | \, \omega \, |^{\, 5/2} \, {\rm e}^{\, - c \, n \, \omega^{\, 2}} \,
\int_{t_{\rm min}}^{t_{\rm max}} {\rm e}^{\, - \, n \, \sqrt{3 \, c_3 \, | \, \omega \, |} \, t^{\, 2} \, + \, n \, \big( c_3 / \sqrt{2} \big) \, t^{\, 3}} \, {\rm d}t \, ,
\end{multline*}
for suitable constants $C>0$ and $c>0$ and $| \, \omega \, | \le \omega_0$ for a sufficiently small constant $\omega_0>0$. The final integral is
dealt with as above, meaning that we can absorb the $t^{\, 3}$ term within the $O(t^{\, 2})$ on the interval $[t_{\rm min},t_{\rm max}]$ and we
end up with the estimate:
\begin{multline}
\label{oscillestim4}
\left| \, \mathcal{H}_{j,n}^{\, \flat} \, - \, \dfrac{{\rm e}^{\, - n \, \frac{c_4}{9 \, c_3^{\, 2}} \, \omega^{\, 2}} \,
{\rm e}^{\, \mathbf{i} \, n \, \frac{2}{3 \, \sqrt{3 \, c_3}} \, | \, \omega \, |^{\, 3/2} \, - \, \mathbf{i} \, \pi / 4}}{2 \, \pi} \, \int_{t_{\rm min}}^{t_{\rm max}}
\, {\rm e}^{\, - \, n \, \sqrt{3 \, c_3 \, | \, \omega \, |} \, t^{\, 2} \, + \, n \, c_3 \, {\rm e}^{\, - \, \mathbf{i} \, \pi / 4} \, t^{\, 3}} \, {\rm d}t \, \right| \\
\le \, \dfrac{C}{\sqrt{n}} \, {\rm e}^{\, - \, c \, n \, \omega^{\, 2}} \, + \, C \, \sqrt{n} \, | \, \omega \, |^{\, 9/4} \, {\rm e}^{\, - c \, n \, \omega^{\, 2}}
\, \le \, \dfrac{C}{\sqrt{n}} \, {\rm e}^{\, - \, c \, n \, \omega^{\, 2}} \, ,
\end{multline}
for suitable constants $C>0$ and $c>0$ that are independent of $\omega$ and $n$, and for $| \, \omega \, | \le \omega_0$ for a sufficiently
small constant $\omega_0>0$.

There are still two steps to further simplify the contribution $\mathcal{H}_{j,n}^{\, \flat}$. The first step consists in restricting to a symmetric interval
for the integral. We recall once again the definition \eqref{tmintmax} of the interval bounds, and use again \eqref{oscillestim4} with the triangle
inequality to obtain:
\begin{multline*}
\left| \, \mathcal{H}_{j,n}^{\, \flat} \, - \, \dfrac{{\rm e}^{\, - \, n \, \frac{c_4}{9 \, c_3^{\, 2}} \, \omega^{\, 2}} \,
{\rm e}^{\, \mathbf{i} \, n \, \frac{2}{3 \, \sqrt{3 \, c_3}} \, | \, \omega \, |^{\, 3/2} \, - \, \mathbf{i} \, \pi / 4}}{2 \, \pi} \, \int_{-t_{\rm max}}^{t_{\rm max}}
\, {\rm e}^{\, - \, n \, \sqrt{3 \, c_3 \, | \, \omega \, |} \, t^{\, 2} \, + \, n \, c_3 \, {\rm e}^{\, - \, \mathbf{i} \, \pi / 4} \, t^{\, 3}} \, {\rm d}t \, \right| \\
\le \, \dfrac{C}{\sqrt{n}} \, {\rm e}^{\, - \, c \, n \, \omega^{\, 2}} \, + \, C \, {\rm e}^{\, - \, c \, n \, \omega^{\, 2}} \, \int_{t_{\rm min}}^{-t_{\rm max}}
\, {\rm e}^{\, - \, c \, n \, | \, \omega \, |^{\, 1/2} \, t^{\, 2}} \, {\rm d}t \, .
\end{multline*}
The length of the interval $[t_{\rm min},-t_{\rm max}]$ is $O(| \, \omega \, |)$, see \eqref{tmintmax}, and the function that is integrated is increasing
with respect to $t$ on the considered interval so we get:
\begin{multline}
\label{oscillestim5}
\left| \, \mathcal{H}_{j,n}^{\, \flat} \, - \, \dfrac{{\rm e}^{\, - \, n \, \frac{c_4}{9 \, c_3^{\, 2}} \, \omega^{\, 2}} \,
{\rm e}^{\, \mathbf{i} \, n \, \frac{2}{3 \, \sqrt{3 \, c_3}} \, | \, \omega \, |^{\, 3/2} \, - \, \mathbf{i} \, \pi / 4}}{2 \, \pi} \, \int_{-t_{\rm max}}^{t_{\rm max}}
\, {\rm e}^{\, - \, n \, \sqrt{3 \, c_3 \, | \, \omega \, |} \, t^{\, 2} \, + \, n \, c_3 \, {\rm e}^{\, - \, \mathbf{i} \, \pi / 4} \, t^{\, 3}} \, {\rm d}t \, \right| \\
\le \, \dfrac{C}{\sqrt{n}} \, {\rm e}^{\, - \, c \, n \, \omega^{\, 2}} \, + \,
C \, {\rm e}^{\, - \, c \, n \, \omega^{\, 2}} \, | \, \omega \, | \, {\rm e}^{\, - \, c \, n \, | \, \omega \, |^{\, 3/2}} \,
\, \le \, \dfrac{C}{\sqrt{n}} \, {\rm e}^{\, - \, c \, n \, \omega^{\, 2}} \, + \, \dfrac{C}{n^{\, 2/3}} \, {\rm e}^{\, - \, c \, n \, \omega^{\, 2}}
\, \le \, \dfrac{C}{\sqrt{n}} \, {\rm e}^{\, - \, c \, n \, \omega^{\, 2}} \, .
\end{multline}

Once we have restricted to a symmetric interval in $t$, the final step consists in getting rid of the $O(t^{\, 3})$ term in the integrated function.
To achieve this, we use the inequality:
$$
\forall \, z \in \C \, ,\quad | \, {\rm e}^{\, z} \, - \, 1 \, - \, z \, | \, \le \, \dfrac{| \, z \, |^{\, 2}}{2} \, {\rm e}^{\, | \, z \, |} \, ,
$$
and the fact that the function:
$$
t \in \R \quad \longmapsto t^{\, 3} \, {\rm e}^{\, - \, n \, \sqrt{3 \, c_3 \, | \, \omega \, |} \, t^{\, 2}} \, ,
$$
is odd to get:
\begin{multline}
\label{oscillestim6}
\left| \, \mathcal{H}_{j,n}^{\, \flat} \, - \, \dfrac{{\rm e}^{\, - \, n \, \frac{c_4}{9 \, c_3^{\, 2}} \, \omega^{\, 2}} \,
{\rm e}^{\, \mathbf{i} \, n \, \frac{2}{3 \, \sqrt{3 \, c_3}} \, | \, \omega \, |^{\, 3/2} \, - \, \mathbf{i} \, \pi / 4}}{2 \, \pi} \, \int_{-t_{\rm max}}^{t_{\rm max}}
\, {\rm e}^{\, - \, n \, \sqrt{3 \, c_3 \, | \, \omega \, |} \, t^{\, 2}} \, {\rm d}t \, \right| \\
\le \, \dfrac{C}{\sqrt{n}} \, {\rm e}^{\, - \, c \, n \, \omega^{\, 2}} \, + \, C \, {\rm e}^{\, - \, c \, n \, \omega^{\, 2}} \, \int_{-t_{\rm max}}^{t_{\rm max}}
\, {\rm e}^{\, - \, n \, \sqrt{3 \, c_3 \, | \, \omega \, |} \, t^{\, 2}} \, {\rm e}^{\, n \, c_3 \, | \, t \, |^{\, 3}} \, n^{\, 2} \, t^{\, 6} \, {\rm d}t \, .
\end{multline}
For $| \, t \, | \le t_{\rm max}$, we use the bound:
$$
- \, n \, \sqrt{3 \, c_3 \, | \, \omega \, |} \, t^{\, 2} \, + \, n \, c_3 \, | \, t \, |^{\, 3} \, \le \, - \, n \, \sqrt{3 \, c_3 \, | \, \omega \, |} \, t^{\, 2} \, + \, n \, c_3 \, t_{\rm max} \, t^{\, 2}
\, = \, - \, \left( 1 \, - \, \dfrac{\sqrt{2}}{3} \right) \, n \, \sqrt{3 \, c_3 \, | \, \omega \, |} \, t^{\, 2} \, ,
$$
and we can thus use \eqref{oscillestim6} to get:
\begin{multline*}
\left| \, \mathcal{H}_{j,n}^{\, \flat} \, - \, \dfrac{{\rm e}^{\, - \, n \, \frac{c_4}{9 \, c_3^{\, 2}} \, \omega^{\, 2}} \,
{\rm e}^{\, \mathbf{i} \, n \, \frac{2}{3 \, \sqrt{3 \, c_3}} \, | \, \omega \, |^{\, 3/2} \, - \, \mathbf{i} \, \pi / 4}}{2 \, \pi} \, \int_{-t_{\rm max}}^{t_{\rm max}}
\, {\rm e}^{\, - \, n \, \sqrt{3 \, c_3 \, | \, \omega \, |} \, t^{\, 2}} \, {\rm d}t \, \right| \\
\le \, \dfrac{C}{\sqrt{n}} \, {\rm e}^{\, - \, c \, n \, \omega^{\, 2}} \, + \, C \, {\rm e}^{\, - \, c \, n \, \omega^{\, 2}} \, \int_{-t_{\rm max}}^{t_{\rm max}}
\, n^{\, 2} \, t^{\, 6} \, {\rm e}^{\, - \, c \, n \, | \, \omega \, |^{\, 1/2} \, t^{\, 2}} \, {\rm d}t \\
\le \, \dfrac{C}{\sqrt{n}} \, {\rm e}^{\, - \, c \, n \, \omega^{\, 2}} \, + \, \dfrac{C}{n^{\, 3/2} \, | \, \omega \, |^{\, 7/4}} \, {\rm e}^{\, - \, c \, n \, \omega^{\, 2}} \, .
\end{multline*}
Writing $| \, \omega \, |^{\, 7/4} = | \, \omega \, | \, | \, \omega \, |^{\, 3/4}$ and bounding from below $| \, \omega \, |^{\, 3/4} \ge n^{-\, 1/2}$, we end up with
our final estimate:
\begin{equation}
\label{oscillestim7}
\left| \, \mathcal{H}_{j,n}^{\, \flat} \, - \, \dfrac{{\rm e}^{\, - \, n \, \frac{c_4}{9 \, c_3^{\, 2}} \, \omega^{\, 2}} \,
{\rm e}^{\, \mathbf{i} \, n \, \frac{2}{3 \, \sqrt{3 \, c_3}} \, | \, \omega \, |^{\, 3/2} \, - \, \mathbf{i} \, \pi / 4}}{2 \, \pi} \, \int_{-t_{\rm max}}^{t_{\rm max}}
\, {\rm e}^{\, - \, n \, \sqrt{3 \, c_3 \, | \, \omega \, |} \, t^{\, 2}} \, {\rm d}t \, \right| \, \le \,
\dfrac{C}{\sqrt{n}} \, {\rm e}^{\, - \, c \, n \, \omega^{\, 2}} \, + \, \dfrac{C}{n \, | \, \omega \, |} \, {\rm e}^{\, - \, c \, n \, \omega^{\, 2}} \, ,
\end{equation}
where \eqref{oscillestim7} holds for any $\omega \in [-\, \omega_0,-n^{- \, 2/3}]$ and $\omega_0>0$ is a sufficiently small constant.

We can now use exactly the same arguments to deal with the last contribution $\mathcal{H}_{j,n}^{\, \sharp}$. Leaving the details to the interested reader,
we end up with the estimate:
\begin{equation}
\label{oscillestim8}
\left| \, \mathcal{H}_{j,n}^{\, \sharp} \, - \, \dfrac{{\rm e}^{\, - \, n \, \frac{c_4}{9 \, c_3^{\, 2}} \, \omega^{\, 2}} \,
{\rm e}^{\, - \, \mathbf{i} \, n \, \frac{2}{3 \, \sqrt{3 \, c_3}} \, | \, \omega \, |^{\, 3/2} \, + \, \mathbf{i} \, \pi / 4}}{2 \, \pi} \, \int_{-t_{\rm max}}^{t_{\rm max}}
\, {\rm e}^{\, - \, n \, \sqrt{3 \, c_3 \, | \, \omega \, |} \, t^{\, 2}} \, {\rm d}t \, \right| \, \le \,
\dfrac{C}{\sqrt{n}} \, {\rm e}^{\, - \, c \, n \, \omega^{\, 2}} \, + \, \dfrac{C}{n \, | \, \omega \, |} \, {\rm e}^{\, - \, c \, n \, \omega^{\, 2}} \, ,
\end{equation}
that is entirely similar to \eqref{oscillestim7}. Eventually, adding \eqref{oscillestim7} and \eqref{oscillestim8} and using the triangle inequality again,
we end up with:
\begin{equation}
\label{oscillestim9}
\left| \, \mathcal{H}_{j,n}^{\, \flat} \, + \, \mathcal{H}_{j,n}^{\, \sharp} \, - \, \GGjn \, \right| \, \le \,
\dfrac{C}{\sqrt{n}} \, {\rm e}^{\, - \, c \, n \, \omega^{\, 2}} \, + \, \dfrac{C}{n \, | \, \omega \, |} \, {\rm e}^{\, - \, c \, n \, \omega^{\, 2}} \, ,
\end{equation}
where we recall the definition \eqref{defprincipal} of $\GGjn$. Adding \eqref{oscillestim9} with \eqref{prop4estim5}, we see that the largest term on the right
hand side is that of \eqref{prop4estim5}, which gives:
$$
\left| \, \Gjn \, - \, \GGjn \, \right| \, \le \, \dfrac{C}{n \, | \, \omega \, |} \, {\rm e}^{\, - \, c \, n \, | \, \omega \, |^{\, 3/2}} \, ,
$$
and the proof of Proposition \ref{prop4} is thus complete.
\end{proof}

\subsection{The tail}

It remains to consider the regime where the parameter $\omega$ in \eqref{formuleGjn} is negative and not small. Our result is summarized in the following
Proposition.

\begin{proposition}
\label{prop5}
Under Assumption \ref{hyp:1}, with the same $\omega_0>0$ as in Proposition \ref{prop4}, there exist two constants $C>0$ and $c>0$ such that the Green's
function in \eqref{def:green} satisfies:
$$
\forall \, n \in \N^* \, ,\quad \forall \, j \in \Z \, ,\quad \big| \, \Gjn \, \big| \, \le \, C \, {\rm e}^{- \, c \, n} \, ,
$$
as long as $n \in \N^*$ and the parameter $\omega$ defined in \eqref{defomega} satisfy $-2 \, M \le \omega \le -\omega_0$ (hence $j \, - \, \alpha \, n \, < \, 0$).
\end{proposition}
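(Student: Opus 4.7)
The plan is to mirror the strategy of Proposition~\ref{prop3}, except that since $\omega$ is now negative, I would deform the contour into the \emph{lower} half plane at a fixed, $\omega$-independent depth. First, splitting
\[
\Gjn \, = \, \varepsilon_j^n \, + \, \dfrac{1}{2\pi} \int_{-\delta}^{\delta} \exp\big( \, \mathbf{i} \, n \, (-|\omega|\theta + c_3\theta^3) - n\, c_4\theta^4 - n\,\theta^5\varphi(-\theta) \, \big) \, \mathrm{d}\theta
\]
as in \eqref{prop4decomposition}, the dissipation condition \eqref{hyp:dissipation} gives $|\varepsilon_j^n|\le C\, e^{-cn}$ directly.

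Keeping the same $\delta$ and $\omega_0>0$ fixed in Proposition~\ref{prop4} (if necessary further shrinking $\omega_0$, which is harmless since it only needs to be small enough), I would apply Cauchy's theorem to replace $[-\delta,\delta]$ with the rectangular contour through the points $-\delta$, $-\delta - \mathbf{i} \, h$, $\delta - \mathbf{i} \, h$, $\delta$, where $h := \sqrt{\omega_0/(3\, c_3)}$ is a fixed depth independent of $\omega$. By \eqref{restrictionsprop4}, this contour stays in the square $[-\delta_0,\delta_0]^2$ where $\varphi$ is holomorphic and bounded by $C_0$. It is the reflected analogue of the upper-half-plane contour from Proposition~\ref{prop3}, the point being that for $\omega<0$ the extra real-part contribution $-n|\omega|h$ gained from $\mathbf{i}\, n\, \omega\, \theta$ along the horizontal base points in the ``good'' (decaying) direction precisely because we go down rather than up.

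For the two vertical segments, parametrized by $\theta = \mp\delta - \mathbf{i}\, y$ with $y \in [0,h]$, the computation mirrors that of \eqref{prop4estim2}: the $-n\, c_4\, \theta^4$ term yields a dominant factor $e^{-n\, c_4\,\delta^4/2}$ along the whole segment, while the subdominant terms (including the positive contribution $3\, n\, c_3\,\delta^2\, y$ coming from $\mathbf{i}\, c_3\, \theta^3$, as well as the $O(y^5)$ correction from $\varphi$) are absorbed using the H\"older inequality and the smallness of $h$ relative to $\delta$. Each vertical contribution is therefore $O(e^{-cn})$, uniformly in $\omega \in [-2M,-\omega_0]$.

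The horizontal segment is the main piece. Parametrizing $\theta = t - \mathbf{i}\, h$ for $t\in[-\delta,\delta]$, the real part of the exponent decomposes as $-n|\omega|\, h + n\, c_3\, h\, (3\, t^2-h^2) - n\, c_4\, (t^4 - 6\, t^2 \, h^2 + h^4) + n\,\mathrm{Re}(-\theta^5\varphi(-\theta))$. The key gain is the first term $-n|\omega|\, h \le -n\, \omega_0\, h$, while all other contributions are bounded by $n\, C$ with $C$ depending only on the fixed constants $\delta, h, c_3, c_4, C_0$ (and independent of $\omega$). The main obstacle, entirely analogous to the parameter bookkeeping in the proof of Proposition~\ref{prop3}, is to choose $\delta$, $h$ and $\omega_0$ small enough so that $C$ is dominated by $\omega_0\, h / 2$; then the real part of the exponent is at most $-cn$ uniformly in $\omega\in[-2M,-\omega_0]$ and $n\in\N^*$. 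Integrating over $t$ yields a horizontal contribution bounded by $C\, e^{-cn}$, and combining all three segment estimates with the bound on $\varepsilon_j^n$ gives the claim.
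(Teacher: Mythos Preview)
Your plan contains a genuine gap on the horizontal segment, and it stems from an incorrect analogy with Proposition~\ref{prop3}. The situation is \emph{not} a simple reflection: when you pass to the lower half plane with $\omega<0$, the cubic term $inc_3\theta^{3}$ at $\theta=t-\mathbf{i}h$ contributes $+3nc_3h\,t^{2}$ to the real part of the exponent, whereas in Proposition~\ref{prop3} (upper half plane, $\omega>0$) the same term contributes $-3nc_3h_0\,t^{2}$. In the latter case this quadratic term \emph{helps}; in your case it is the principal obstruction. Concretely, the real part of the exponent along your horizontal segment is
\[
-n|\omega|h+3nc_3h\,t^{2}-nc_3h^{3}-nc_4(t^{4}-6t^{2}h^{2}+h^{4})+r(t),
\]
and the function $t\mapsto 3c_3h\,t^{2}-c_4t^{4}$ has a positive maximum equal to $9c_3^{2}h^{2}/(4c_4)$ at $t_*^{2}=3c_3h/(2c_4)$. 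With $h=\sqrt{\omega_0/(3c_3)}$ you have $\omega_0h=3c_3h^{3}$, which is $o(h^{2})$ as $h\to0$; thus the gain $-n\omega_0h$ cannot absorb $n\cdot 9c_3^{2}h^{2}/(4c_4)$. Shrinking $\omega_0$ (hence $h$) makes this \emph{worse}, not better, so your claim that one can ``choose $\delta,h,\omega_0$ small enough'' to close the estimate is incorrect. Keeping Proposition~\ref{prop4}'s $\delta$ also fails at $t=\pm\delta$, since there $3c_3h\delta^{2}$ is neither dominated by $c_4\delta^{4}$ (this would require $\delta>3c_3/c_4$) nor by $\omega_0h$ (since $\omega_0\le 3c_3\delta^{2}/4$ from \eqref{restrictionsprop4}).

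The paper avoids this by choosing a \emph{new}, smaller $\delta$ satisfying $3c_3\delta^{2}\le\omega_0/3$ (see \eqref{restrictionsprop5}) and by using a V-shaped contour $[-\delta,-\mathbf{i}\delta]\cup[-\mathbf{i}\delta,\delta]$ rather than a rectangle. With that smaller $\delta$ the dangerous interior maximum at $t_*$ lies outside $[-\delta,\delta]$, and the positive cubic contribution $3c_3\delta^{2}$ is directly dominated by $|\omega|\ge\omega_0$. Your rectangular contour would in fact also work once you impose $3c_3\delta^{2}\le\omega_0/3$ (so $\delta$ must be re-chosen, not inherited from Proposition~\ref{prop4}); the V-shape is just the paper's particular implementation.
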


\begin{proof}
As in the proof of Proposition \ref{prop2}, we decompose the Green's function into:
$$
\Gjn \, = \, \varepsilon_j^n \, + \,
\dfrac{1}{2\, \pi} \, \int_{- \, \delta}^\delta \, {\rm e}^{\, \mathbf{i} \, n \, \omega \, \theta} \, \Big( {\rm e}^{\, \mathbf{i} \, \alpha \, \theta} \, \widehat{F}_a (- \, \theta) \Big)^n
\, {\rm d}\theta \, ,
$$
with $\delta \in (0,\pi)$ to be fixed and:
$$
\varepsilon_j^n \, := \, \dfrac{1}{2\, \pi} \, \int_{-\pi}^{- \, \delta} \,
{\rm e}^{\, \mathbf{i} \, n \, \omega \, \theta} \, \Big( {\rm e}^{\, \mathbf{i} \, \alpha \, \theta} \, \widehat{F}_a (- \, \theta) \Big)^n \, {\rm d}\theta \, + \,
\dfrac{1}{2\, \pi} \, \int_\delta^\pi \,
{\rm e}^{\, \mathbf{i} \, n \, \omega \, \theta} \, \Big( {\rm e}^{\, \mathbf{i} \, \alpha \, \theta} \, \widehat{F}_a (- \, \theta) \Big)^n \, {\rm d}\theta \, .
$$
The parameter $\delta \in (0,\pi)$ is fixed here in order to satisfy\footnote{Recall that $\omega_0>0$ is given here by the result of Proposition \ref{prop4} so
we may not modify it.}:
\begin{equation}
\label{restrictionsprop5}
\delta \, \le \, \delta_0 \, ,\qquad C_0 \, \delta \, \le \, \dfrac{c_4}{2} \, ,\qquad 3 \, c_3 \, \delta^{\, 2} \, \le \, \dfrac{\omega_0}{3} \, ,\qquad
6 \, c_4 \, \delta^3 \, \le \, \dfrac{\omega_0}{3} \, .
\end{equation}

From Assumption \ref{hyp:1}, we have the uniform bound:
$$
| \, \varepsilon_j^n \, | \, \le \, C \, {\rm e}^{\, - \, c \, n} \, ,
$$
so we focus on the second term in the above decomposition of $\Gjn$, which we denote $\mathcal{H}_j^n$ as in the proof of Proposition \ref{prop2}.
Thanks to our choice for $\delta$, we can use Cauchy's formula and use the segments $[-\, \delta,-\mathbf{i} \, \delta] \cup [-\mathbf{i} \, \delta,\delta]$
rather than the interval $[-\, \delta,\delta]$ as an integration contour (see Figure \ref{fig:contour4}). Those two segments are included in the closed square
$\{ z \in \C \, / \, \max \, ( \, |\text{\rm Re } z|,|\text{\rm Im } z| \, ) \, \le \, \delta_0 \}$ on which $\varphi$ is holomorphic and its modulus is bounded by $C_0$.
We thus obtain a decomposition:
$$
\mathcal{H}_j^n \, = \, \mathcal{H}_j^n(1) \, + \, \mathcal{H}_j^n (2) \, ,
$$
with ($\omega$ is negative here):
\begin{multline*}
\mathcal{H}_j^n(1) \, := \, \dfrac{\delta \, (1-\mathbf{i})}{2 \, \pi} \, \int_0^1 \, \exp \, \Big( \, - \, \mathbf{i} \, n \, | \, \omega \, | \, \big( -\delta \, (1-t) \, - \, \mathbf{i} \, \delta \, t \big)
\, + \, \mathbf{i} \, n \, c_3 \, \big( -\delta \, (1-t) \, - \, \mathbf{i} \, \delta \, t \big)^{\, 3} \\
- \, n \, c_4 \, \big( -\delta \, (1-t) \, - \, \mathbf{i} \, \delta \, t \big)^{\, 4} \, - \, n \, \big( -\delta \, (1-t) \, - \, \mathbf{i} \, \delta \, t \big)^{\, 5} \,
\varphi \big( \delta \, (1-t) \, + \, \mathbf{i} \, \delta \, t \big) \, \Big) \, {\rm d}t \, ,
\end{multline*}
and
\begin{multline*}
\mathcal{H}_j^n(2) \, := \, \dfrac{\delta \, (1+\mathbf{i})}{2 \, \pi} \, \int_0^1 \, \exp \, \Big( \, - \, \mathbf{i} \, n \, | \, \omega \, | \, \big( \delta \, t \, - \, \mathbf{i} \, \delta \, (1-t) \big)
\, + \, \mathbf{i} \, n \, c_3 \, \big( \delta \, t \, - \, \mathbf{i} \, \delta \, (1-t) \big)^{\, 3} \\
- \, n \, c_4 \, \big( \delta \, t \, - \, \mathbf{i} \, \delta \, (1-t) \big)^{\, 4} \, - \, n \, \big( \delta \, t \, - \, \mathbf{i} \, \delta \, (1-t) \big)^{\, 5} \,
\varphi \big( - \, \delta \, t \, + \, \mathbf{i} \, \delta \, (1-t) \big) \, \Big) \, {\rm d}t \, .
\end{multline*}

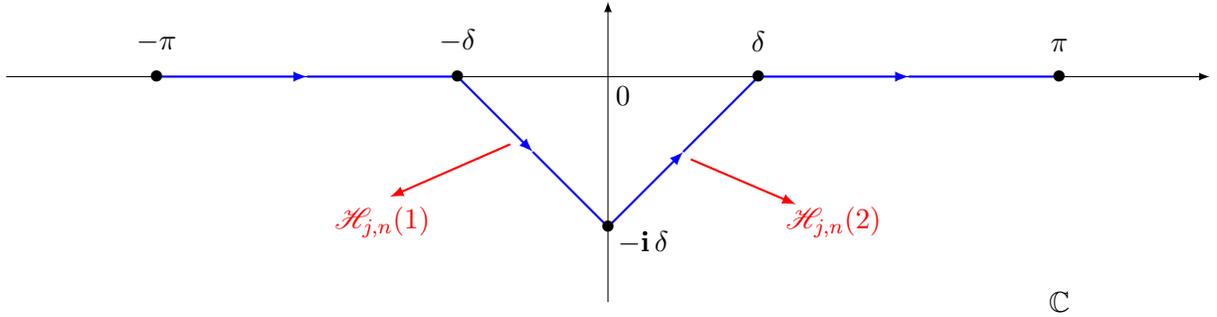
\begin{figure}[h!]
\begin{center}
\begin{tikzpicture}[scale=2,>=latex]
\draw[black,->] (-4,0) -- (4,0);
\draw[black,->] (0,-1.5)--(0,0.5);
\draw[thick,blue,->] (-3,0) -- (-2,0);
\draw[thick,blue] (-2,0) -- (-1,0);
\draw[thick,blue,->] (-1,0) -- (-0.5,-0.5);
\draw[thick,blue] (-0.5,-0.5) -- (0,-1);
\draw[thick,blue,->] (0,-1) -- (0.5,-0.5);
\draw[thick,blue] (0.5,-0.5) -- (1,0);
\draw[thick,blue,->] (1,0) -- (2,0);
\draw[thick,blue] (2,0) -- (3,0);
\draw (-3,0.1) node[above]{$-\pi$};
\draw (3,0.1) node[above]{$\pi$};
\draw (-1,0.1) node[above]{$-\delta$};
\draw (1,0.1) node[above]{$\delta$};
\draw (0,-1.1) node[right]{$-\mathbf{i} \, \delta$};
\draw (-1.5,-0.8) node[below]{{\color{red}$\mathcal{H}_{j,n}(1)$}};
\draw (1.5,-0.8) node[below]{{\color{red}$\mathcal{H}_{j,n}(2)$}};
\draw[thick,red,->] (-0.65,-0.45) -- (-1.45,-0.8);
\draw[thick,red,->] (0.55,-0.55) -- (1.25,-0.85);
\draw (0.1,0) node[below]{$0$};
\draw (3,-1.5) node {$\C$};
\node (centre) at (0,-1){$\bullet$};
\node (centre) at (-1,0){$\bullet$};
\node (centre) at (1,0){$\bullet$};
\node (centre) at (-3,0){$\bullet$};
\node (centre) at (3,0){$\bullet$};
\end{tikzpicture}
\caption{The integration contour in the case $-2 \, M \le \omega \le -\omega_0$.}
\label{fig:contour4}
\end{center}
\end{figure}

We deal with the estimate of the first integral $\mathcal{H}_j^n(1)$ and leave the similar estimate of $\mathcal{H}_j^n(2)$ to the interested reader.
Expanding the terms within the exponential function, we compute:
\begin{multline*}
\mathcal{H}_j^n(1) \, = \, \dfrac{\delta \, (1-\mathbf{i})}{2 \, \pi} \, \int_0^1 \, {\rm e}^{\, \mathbf{i} \, \cdots} \,
\exp \, \Big( \, - \, n \, | \, \omega \, | \, \delta \, t \, - \, n \, c_3 \, \delta^{\, 3} \, t^{\, 3} \, + \, 3 \, n \, c_3 \, \delta^{\, 3} \, (1-t)^{\, 2} \, t \, \Big) \, \times \\
\exp \, \Big( \, - \, n \, c_4 \, \delta^{\, 4} \, (1-t)^{\, 4} \, + \, 6 \, n \, c_4 \, \delta^{\, 4} \, (1-t)^{\, 2} \, t^{\, 2} \, - \, n \, c_4 \, \delta^{\, 4} \, t^{\, 4} \Big) \, \times \\
\exp \, \Big( \, - \, n \, \big( -\delta \, (1-t) \, - \, \mathbf{i} \, \delta \, t \big)^{\, 5} \, \varphi \big( \delta \, (1-t) \, + \, \mathbf{i} \, \delta \, t \big) \, \Big) \, {\rm d}t \, .
\end{multline*}
We take the modulus on each side of the equality sign and use the uniform bound on $\varphi$ to get:
\begin{multline*}
| \, \mathcal{H}_j^n(1) \, | \, \le \, C \, {\rm e}^{\, n \, C_0 \, \delta^{\, 5}} \, \int_0^1 \,
\exp \, \Big( \, - \, n \, | \, \omega \, | \, \delta \, t \, - \, n \, c_3 \, \delta^{\, 3} \, t^{\, 3} \, + \, 3 \, n \, c_3 \, \delta^{\, 3} \, (1-t)^{\, 2} \, t \, \Big) \, \times \\
\exp \, \Big( \, - \, n \, c_4 \, \delta^{\, 4} \, (1-t)^{\, 4} \, + \, 6 \, n \, c_4 \, \delta^{\, 4} \, (1-t)^{\, 2} \, t^{\, 2} \, - \, n \, c_4 \, \delta^{\, 4} \, t^{\, 4} \, \Big) \, {\rm d}t \, .
\end{multline*}
Using $| \, \omega \, | \ge \omega_0$ and \eqref{restrictionsprop5}, we have:
$$
3 \, n \, c_3 \, \delta^{\, 3} \, (1-t)^{\, 2} \, t \, \le \, \dfrac{\omega_0}{3} \, \delta \, t \, ,\qquad \text{\rm and} \qquad
6 \, n \, c_4 \, \delta^{\, 4} \, (1-t)^{\, 2} \, t^{\, 2} \, \le \, \dfrac{\omega_0}{3} \, \delta \, t \, .
$$
We then get:
$$
| \, \mathcal{H}_j^n(1) \, | \, \le \, C \, {\rm e}^{\, n \, C_0 \, \delta^{\, 5}} \, \int_0^1 \,
\exp \, \Big( \, - \, n \, \dfrac{\omega_0}{3} \, \delta \, t \, - \, n \, c_4 \, \delta^{\, 4} \, (1-t)^{\, 4} \, \Big) \, {\rm d}t \, .
$$
With the restriction \eqref{restrictionsprop5}, we can see that the function:
$$
t \in [0,1] \, \longmapsto \, \dfrac{\omega_0}{3} \, \delta \, t \, + \, c_4 \, \delta^{\, 4} \, (1-t)^{\, 4} \, ,
$$
achieves its minimum at $t=0$, and we thus get:
$$
| \, \mathcal{H}_j^n(1) \, | \, \le \, C \, {\rm e}^{\, n \, C_0 \, \delta^{\, 5}} \, {\rm e}^{\, - \, n \, c_4 \, \delta^{\, 4}} \, \le \,
C \, {\rm e}^{\, - \, n \, \frac{c_4}{2} \, \delta^{\, 4}} \, ,
$$
where we used \eqref{restrictionsprop5} one last time. In the end, we have proved that each term in the decomposition of $\Gjn$ is exponentially small in
the considered regime $-2 \, M \le \omega \le -\omega_0$.
\end{proof}

\subsection{Conclusion}

We now discuss the proof of the bound \eqref{bound2} in Theorem \ref{thm:1}. For $| \, j \, | \le M \, n$, we use the definition \eqref{defprincipal} of $\GGjn$
to get:
$$
| \, \GGjn \, | \, \le \, \dfrac{1}{\pi} \, \exp \left( - \, \dfrac{c_4 \, (j \, - \, \alpha \, n)^{\, 2}}{9 \, c_3^{\, 2} \, n} \, \right) \,
\int_{-\sqrt{\frac{2 \, (M \, + \, |\alpha|)}{3\, c_3}}}^{\sqrt{\frac{2 \, (M \, + \, |\alpha|)}{3\, c_3}}} \, {\rm d}u
\, \le \, C \, \exp \left( - \, \dfrac{c_4 \, (j \, - \, \alpha \, n)^{\, 2}}{9 \, c_3^{\, 2} \, n} \, \right) \, .
$$
In particular, with $\omega_0>0$ given as in Proposition \ref{prop4}, there exist two constants $C>0$ and $c>0$ such that the approximate Green's
function in \eqref{defprincipal} satisfies:
$$
\big| \, \GGjn \, \big| \, \le \, C \, {\rm e}^{- \, c \, n} \, ,
$$
as long as $j$ and $n$ satisfy $-2 \, M \le \omega \le -\omega_0$ (recall that the parameter $\omega$ defined in \eqref{defomega}). Combining with
Proposition \ref{prop5}, we obtain:
$$
\big| \, \Gjn \, - \, \GGjn \, \big| \, \le \, C \, {\rm e}^{- \, c \, n} \, \le \,
\dfrac{C}{n \, | \, \omega \, |} \, \exp \left( - \, c \, n \, | \, \omega \, |^{\, 3/2} \, \right) \, ,
$$
for $-2 \, M \le \omega \le -\omega_0$. Hence the bound \eqref{estimprop4} of Proposition \ref{prop4} holds not only for $-\omega_0 \le \omega \le -n^{-\, 2/3}$
but for the wider interval $-2 \, M \le \omega \le -n^{-\, 2/3}$ (up to choosing suitable constants $C$ and $c$). This proves the validity of \eqref{bound2} for
$-2 \, M \, n \le j \, - \, \alpha \, n \le -n^{\, 1/3}$.

Let us now discuss the case $-n^{\, 1/3} \le j \, - \, \alpha \, n \le 0$. We go back to the definition \eqref{defprincipal} of $\GGjn$ and use the following
estimate from above:
$$
| \, \GGjn \, | \, \le \, \dfrac{1}{\pi} \, \int_{-\sqrt{\frac{2 \, |j \, - \, \alpha \, n|}{3\, c_3 \, n}}}^{\sqrt{\frac{2 \, |j \, - \, \alpha \, n|}{3\, c_3 \, n}}} \,
{\rm e}^{\, - \, \sqrt{3 \, c_3 \, n \, |j \, - \, \alpha \, n|} \, u^{\, 2}} \, {\rm d}u \, \le \, \dfrac{2}{\pi} \, \sqrt{\frac{2 \, |j \, - \, \alpha \, n|}{3\, c_3 \, n}}
\, \le \, \dfrac{C}{n^{\, 1/3}} \, ,
$$
since we have $|j \, - \, \alpha \, n| \le n^{\, 1/3}$. Adding with the uniform estimate of Proposition \eqref{prop1}, we get:
$$
| \, \Gjn \, - \, \GGjn \, | \, \le \, \dfrac{C}{n^{\, 1/3}} \, ,
$$
and this proves the validity of \eqref{bound2} for $-n^{\, 1/3} \le j \, - \, \alpha \, n \le 0$.

We now discuss the remaining values $j \, < \, - \, 2 \, M \, n$ for which the Green's function $\Gjn$ vanishes. For $j \, < \, - \, 2 \, M \, n$, we are in the case
$j \, - \, \alpha \, n  \, < \, - \, M \, n$ and $\Gjn=0$, hence the bound \eqref{bound2} reduces to proving:
\begin{equation}
\label{bound3}
\big| \, \GGjn \, \big| \, \le \, \dfrac{C}{|j \, - \, \alpha \, n|} \, \exp \left( - \, c \, \dfrac{|j \, - \, \alpha \, n|^{\, 3/2}}{n} \, \right) \, ,
\end{equation}
for some suitable constants $C$ and $c$. For $j \, - \, \alpha \, n  \, < \, - \, M \, n$, the definition \eqref{defprincipal} of $\GGjn$ gives:
\begin{align*}
| \, \GGjn \, | \, \le \, \dfrac{1}{\pi} \, \exp \left( - \, \dfrac{c_4 \, (j \, - \, \alpha \, n)^{\, 2}}{9 \, c_3^{\, 2} \, n} \, \right) \,
\int_{\R} \, {\rm e}^{\, - \, \sqrt{3 \, c_3 \, M} \, n \, u^{\, 2}} \, {\rm d}u \, \le \,
\dfrac{C}{\sqrt{n}} \, \exp \left( - \, c \, \dfrac{(j \, - \, \alpha \, n)^{\, 2}}{n} \, \right) \\
\le \, \dfrac{C}{|j \, - \, \alpha \, n|} \, \dfrac{|j \, - \, \alpha \, n|}{\sqrt{n}} \, \exp \left( - \, c \, \dfrac{(j \, - \, \alpha \, n)^{\, 2}}{n} \, \right)
\, \le  \, \dfrac{C}{|j \, - \, \alpha \, n|} \, \exp \left( - \, c \, \dfrac{(j \, - \, \alpha \, n)^{\, 2}}{n} \, \right) \, .
\end{align*}
Hence \eqref{bound3} is valid for $j \, < \, - \, 2 \, M \, n$ since the decay of $\GGjn$ is actually even faster than the right hand side of \eqref{bound3}.
This completes the proof of Theorem \ref{thm:1}.

\section{Consequences}
\label{section4}

This section is devoted to the proof of several consequences of Theorem \ref{thm:1}. We first prove Corollary \ref{coro:1}. We then state and prove two other
consequences, the first of which makes the divergence of $(\Gjn)_{j \in \Z}$ in the $\ell^1$ norm precise. Our last result deals with the action of the Laurent
operator $L_a$ on sequences with bounded variations and gives an alternative proof to one of the main results in \cite{ELR}.

\subsection{On the instability in the maximum norm. I}

We first prove Corollary \ref{coro:1}. In many occurrences below, we shall compare Riemann sums with integrals of nonincreasing functions. Namely, for $a \in \R$
and $f \in L^1((a,+\infty);\R^+)$ a nonincreasing function, for any nondecreasing sequence $(x_k)_{k \in \N}$ with values in $[a,+\infty)$, there holds:
$$
\sum_{k \in \N} \, (x_{k+1} \, - \, x_k) \, f(x_{k+1}) \, \le \, \int_a^{+\infty} \, f(x) \, {\rm d}x \, .
$$

\begin{proof}[Proof of Corollary \ref{coro:1}]
For $n \in \N^*$, we define the integer:
$$
J_n \, := \, \min \, \Big\{ \, j \in \Z \, / \, j \, - \, \alpha \, n \, \ge 1 \, \Big\} \, ,
$$
and our first goal is to show a uniform upper bound for the sum:
$$
\big| \, \mathcal{G}_{J_n-1}^{\, n} \, \big| \, + \, \sum_{j \, \ge \, J_n} \, \big| \, \Gjn \, \big| \, ,
$$
see \eqref{boundcoro1-1}. The uniform bound for the term $\mathcal{G}_{J_n-1}^{\, n}$ follows from Proposition \ref{prop1} (actually, this term is not only bounded
but tends to zero). We now use \eqref{bound1} to obtain (for suitable positive constants $C$ and $c$):
$$
\sum_{j \, \ge \, J_n} \, \big| \, \Gjn \, \big| \, \le \, C \, \sum_{j \, \ge \, J_n} \, (x_{j+1,n} \, - \, x_{j,n}) \, f(x_{j,n}) \, ,
$$
where the function $f$ is defined on $\R^+$ by:
$$
\forall \, x \, > \, 0 \, ,\quad f(x) \, := \, \min \, ( \, 1 \, , \, x^{-1/4} \, ) \, \exp \, \big( \, - \, c \, x^{\, - \, 3/2} \, \big) \, ,
$$
and the sampling sequence $(x_{j,n})_{j \ge J_n}$ is here defined as:
$$
\forall \, j \, \ge \, J_n \, ,\quad x_{j,n} \, := \, \dfrac{j \, - \, \alpha \, n}{n^{\, 1/3}} \, \ge \, \dfrac{1}{n^{\, 1/3}} \, .
$$
We thus get:
$$
\sum_{j \, \ge \, J_n} \, \big| \, \Gjn \, \big| \, \le \, C \, \int_{x_{J_n,n}-n^{\, -1/3}}^{+\infty} \, f(x) \, {\rm d}x \, \le \, C \, \| \, f \, \|_{L^1(\R^+)} \, .
$$
This completes the proof of the validity of \eqref{boundcoro1-1}. The proof of \eqref{boundcoro1-2} follows from \eqref{bound2} in a similar way.
\end{proof}

\subsection{On the instability in the maximum norm. II}

Our result is the following.

\begin{corollary}
\label{coro:2}
Let Assumption \ref{hyp:1} be satisfied with $c_3>0$. Then the Green's function $(\Gjn)_{(n,j) \in \N \times \Z}$ in \eqref{def:green} satisfies:
\begin{equation}
\label{limit}
\lim_{n \, \to \, + \, \infty} \, \, \dfrac{1}{n^{\, 1/8}} \, \, \sum_{j \in \Z} \, \big| \, \Gjn \, \big| \, = \, \dfrac{8 \, \Gamma (11/8)}{\sqrt{3} \, \pi^{\, 3/2}} \,
\dfrac{c_3^{\, 1/2}}{c_4^{\, 3/8}} \, ,
\end{equation}
where we use the standard notation for the $\Gamma$ function:
$$
\forall \, x \, > \, 0 \, ,\quad \Gamma (x) \, := \, \int_0^{+\infty} \, t^{\, x \, - \, 1} \, {\rm e}^{\, - \, t} \, {\rm d}t \, .
$$
\end{corollary}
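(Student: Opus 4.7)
By Corollary \ref{coro:1}, we have $\sum_{j - \alpha n \ge 0} |\Gjn| = O(1)$ and $\sum_{j - \alpha n < 0} \big| \, |\Gjn| - |\GGjn| \, \big| = O(1)$, so proving \eqref{limit} reduces to establishing
\[
\lim_{n \to + \infty} \, \frac{1}{n^{\, 1/8}} \sum_{j \, - \, \alpha n \, < \, 0} \big| \, \GGjn \, \big| \, = \, \frac{8 \, \Gamma(11/8)}{\sqrt{3} \, \pi^{\, 3/2}} \, \frac{c_3^{\, 1/2}}{c_4^{\, 3/8}} \, .
\]
All the analysis from now on is on the explicit expression \eqref{defprincipal}.

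Writing $m := -(j - \alpha n) > 0$, $A_m := \sqrt{2m/(3 c_3 n)}$ and $B_m := \sqrt{3 c_3 n m}$, I first estimate the inner Gaussian integral by extending it to $\R$: since $B_m A_m^{\, 2} = \frac{2}{\sqrt{3 c_3}} \, m^{\, 3/2}/\sqrt{n}$, the standard error function bound gives
\[
\int_{- A_m}^{A_m} {\rm e}^{- B_m u^{\, 2}} \, {\rm d}u \, = \, \sqrt{\pi/B_m} \, + \, O\!\left( \frac{{\rm e}^{- B_m A_m^{\, 2}}}{A_m \, B_m} \right) \, .
\]
When summed against $n^{- 1/8}$, the contribution of the error term is negligible (a direct computation, analogous to the proof of Corollary \ref{coro:1}, shows its $\ell^{\, 1}$ norm in $m$ is $O(n^{-1/12})$ or better). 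Moreover the contribution of the range $m \le n^{1/3}$ to $\sum |\GGjn|$ is $O(1)$ by brutal estimation, hence may be dropped. Thus it suffices to compute the limit of
\[
\frac{1}{n^{\, 1/8}} \, \sum_{m \, \ge \, 1} \, \frac{\sqrt{\pi}}{\pi \, B_m^{\, 1/2}} \, \exp\!\left( - \frac{c_4 \, m^{\, 2}}{9 \, c_3^{\, 2} \, n} \right) \, \left| \cos\!\left( \frac{2 \, m^{\, 3/2}}{3 \, \sqrt{3 \, c_3 \, n}} \, - \, \frac{\pi}{4} \right) \right| \, .
\]

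The next step is the correct rescaling: set $m = \sqrt{n} \, y$, so that $B_m^{\, - 1/2} = (3 c_3)^{- 1/4} \, n^{- 3/8} \, y^{- 1/4}$, the Gaussian becomes $\exp(- c_4 \, y^{\, 2}/(9 \, c_3^{\, 2}))$, and the phase becomes $n^{\, 1/4} \, \psi(y) - \pi/4$ with $\psi(y) := 2 \, y^{\, 3/2}/(3 \sqrt{3 \, c_3})$. A Riemann-sum argument (spacing $\Delta m = 1$ corresponds to $\Delta y = n^{\, -1/2}$), combined with uniform Lipschitz control on $y^{-1/4} \, {\rm e}^{- c_4 y^{\, 2}/(9 c_3^{\, 2})}$ away from $y=0$ and the integrability of $y^{- 1/4}$ near $0$, yields
\[
\frac{1}{n^{\, 1/8}} \, \sum_{j \, - \, \alpha n \, < \, 0} \big| \, \GGjn \, \big| \, = \, \frac{(3 \, c_3)^{- 1/4}}{\sqrt{\pi}} \, \int_0^{+ \infty} \, {\rm e}^{- \frac{c_4 \, y^{\, 2}}{9 \, c_3^{\, 2}}} \, y^{- 1/4} \, \left| \cos\!\left( n^{\, 1/4} \, \psi(y) \, - \, \frac{\pi}{4} \right) \right| \, {\rm d}y \, + \, o(1) \, ,
\]
since the prefactor balance $n^{- 1/8} \cdot n^{- 3/8} \cdot n^{1/2} = 1$ is exactly what drives the scaling.

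The final step is to replace the oscillating $|\cos|$ by its mean value $2/\pi$ as $n \to + \infty$. Using the Fourier series $|\cos \theta| = \frac{2}{\pi} + \frac{4}{\pi} \sum_{k \ge 1} \frac{(-1)^{k+1}}{4 k^{\, 2} - 1} \, \cos(2 k \, \theta)$, each non-constant term produces an oscillatory integral with phase $2 k \, n^{\, 1/4} \, \psi(y)$. Away from $y = 0$, $\psi'(y) = \sqrt{y/(3 c_3)}$ is bounded below, so the classical van der Corput / stationary-phase lemma gives a vanishing contribution; near $y = 0$ the integrability of $y^{- 1/4}$ allows uniform control by cutting off at $y = \epsilon$ and letting $\epsilon \to 0$ after $n \to +\infty$. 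This provides the claimed limit
\[
\frac{(3 \, c_3)^{- 1/4}}{\sqrt{\pi}} \cdot \frac{2}{\pi} \int_0^{+\infty} y^{- 1/4} \, {\rm e}^{- \frac{c_4 \, y^{\, 2}}{9 \, c_3^{\, 2}}} \, {\rm d}y \, .
\]
The remaining Gamma integral is evaluated by the substitution $t = c_4 \, y^{\, 2}/(9 c_3^{\, 2})$, yielding $\frac{1}{2} \, (9 c_3^{\, 2}/c_4)^{\, 3/8} \, \Gamma(3/8)$; using $\Gamma(3/8) = \frac{8}{3} \, \Gamma(11/8)$ and simplifying the powers of $3$ and $c_3$ gives exactly the constant $\frac{8 \, \Gamma(11/8)}{\sqrt{3} \, \pi^{\, 3/2}} \, c_3^{\, 1/2}/c_4^{\, 3/8}$.

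\textbf{Main obstacle.} The delicate point is the uniform passage from the Riemann sum in $m$ to an integral in $y$ combined with the oscillatory phase: one must show that the sampling error is $o(n^{\, 1/8})$ even though each summand carries an oscillating factor of frequency $n^{\, 1/4}$. This is handled by performing the Riemann sum approximation on the \emph{smooth} part (Gaussian $\times$ algebraic factor), while treating $|\cos|$ as a bounded weight: the monotonicity/Lipschitz bounds on the smooth factor give an error of order $\sum_m \Delta y \cdot \sup_{[y_m, y_{m+1}]} |(\cdot)'| \cdot \Delta y = O(n^{\, - 1/2})$ per unit $y$, which is comfortably smaller than the $O(1)$ leading order. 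The subsequent oscillation-averaging argument is then a routine stationary-phase estimate, since the phase $\psi$ is real analytic with a single, integrable degeneracy at the origin.
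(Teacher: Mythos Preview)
Your proposal follows essentially the same route as the paper's proof: reduce to $\GGjn$ via Corollary~\ref{coro:1}, replace the truncated Gaussian integral in \eqref{defprincipal} by $\sqrt{\pi/B_m}$ (the paper's Lemma~\ref{lemA1}), rescale $m=\sqrt{n}\,y$ and pass from the Riemann sum with step $n^{-1/2}$ to an integral (Lemma~\ref{lemA2}), then average $|\cos|$ to its mean $2/\pi$ (Lemma~\ref{lemA3}, whose proof the paper also describes as Fourier series plus oscillatory-integral decay) and evaluate the resulting Gamma integral. The final algebraic identification of the constant is identical.

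One point in your ``Main obstacle'' paragraph deserves correction. You claim the Riemann-sum error is $O(n^{-1/2})$ per unit $y$ by using only the Lipschitz bound on the \emph{smooth} factor $y^{-1/4}\,{\rm e}^{-\beta_0 y^{2}}$ while ``treating $|\cos|$ as a bounded weight''. This does not work as stated: passing from $\sum_m f(y_m)\,w_n(y_m)\,\Delta y$ to $\int f(y)\,w_n(y)\,{\rm d}y$ requires control on the variation of the full integrand $f\cdot w_n$, and the factor $w_n(y)=|\cos(n^{1/4}\psi(y)-\pi/4)|$ has Lipschitz constant of order $n^{1/4}$ on any compact interval bounded away from $0$. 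The correct error bound on a fixed interval $[A,B]$ is therefore $O(n^{1/4})\cdot\Delta y=O(n^{-1/4})$, not $O(n^{-1/2})$. Since this is still $o(1)$ your conclusion is unaffected, but the mechanism you describe is not the one that actually closes the estimate. The paper makes this explicit: it bounds the Lipschitz constant of the full function $f_n(y)=y^{-1/4}\,{\rm e}^{-\beta_0 y^{2}}\,|\cos(\beta_1 n^{1/4}y^{3/2}-\pi/4)|$ by $C_\varepsilon\,n^{1/4}$ before comparing sum and integral.
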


\noindent As far as we know, the results in \cite{hedstrom1,hedstrom2} only give the existence of two constants $0 \, < \, \ell_1 \, < \, \ell_2$ such that the
following inequalities hold:
$$
\forall \, n \in \N^* \, ,\qquad \ell_1 \, < \, \dfrac{1}{n^{\, 1/8}} \, \, \sum_{j \in \Z} \, \big| \, \Gjn \, \big| \, < \, \ell_2 \, ,
$$
and these results do not seem to have been improved since then. Our result is more precise. We hope that our sharp bounds for the Green's function
$(\Gjn)_{(n,j) \in \N \times \Z}$ will help us get sharp stability results for discrete shock profiles associated with the Lax-Wendroff scheme, see \cite{SmyrlisYu}
for a preliminary analysis. The sharp description \eqref{limit} is a first step in this direction and a more general study is left to a future work.

At a formal level, the result of Corollary \ref{coro:2} is consistent with \cite{Thomee} since when $c_3$ vanishes in \eqref{hyp:stabilite2} and $c_4$ remains
positive, the main result of \cite{Thomee} shows that $(\Gjn)_{j \in \Z}$ is bounded in $\ell^1(\Z;\C)$. Hence the limit \eqref{limit} should be zero when $c_3$
vanishes, which is consistent with \eqref{limit}.

\begin{proof}[Proof of Corollary \ref{coro:2}]
We first apply Corollary \ref{coro:1} and get:
$$
\lim_{n \, \to \, + \, \infty} \, \, \dfrac{1}{n^{\, 1/8}} \, \, \sum_{j \, / \, j \, - \, \alpha \, n \, \ge \, 0} \, \big| \, \Gjn \, \big| \, = \,
\lim_{n \, \to \, + \, \infty} \, \, \dfrac{1}{n^{\, 1/8}} \, \, \sum_{j \, / \, j \, - \, \alpha \, n \, < \, 0} \, \big| \, \Gjn \, - \, \GGjn \, \big| \, = \, 0 \, .
$$
From now on, we let $\ell$ denote the value of the limit in \eqref{limit}, that is:
\begin{equation}
\label{defl}
\ell \, := \, \dfrac{8 \, \Gamma (11/8)}{\sqrt{3} \, \pi^{\, 3/2}} \, \dfrac{c_3^{\, 1/2}}{c_4^{\, 3/8}} \, .
\end{equation}
In order to show \eqref{limit}, it is therefore sufficient to prove:
\begin{equation}
\label{limit1}
\lim_{n \, \to \, + \, \infty} \, \, \dfrac{1}{n^{\, 1/8}} \, \, \sum_{j \, / \, j \, - \, \alpha \, n \, < \, 0} \, \big| \, \GGjn \, \big| \, = \, \ell \, ,
\end{equation}
where we recall that the approximate Green's function $(\GGjn)_{j \in \Z}$ is defined in \eqref{defprincipal}. In what follows, we are going
to prove that \eqref{limit1} holds, where the limit $\ell$ is defined in \eqref{defl}.
\bigskip

For $n \in \N^*$, we now define:
$$
J_n \, := \, \max \, \Big\{ \, j \in \Z \, / \, j \, - \, \alpha \, n \, < \, - \, 3 \, \Big\} \, .
$$
From the definition \eqref{defprincipal}, we get the trivial bound (just bound the absolute value of the cosine and the exponentials by $1$):
$$
\big| \, \GGjn \, \big| \, \le \, \dfrac{2}{\pi} \, \sqrt{\dfrac{2 \, | \, j \, - \, \alpha \, n \, |}{3 \, c_3 \, n}} \, ,
$$
which implies:
$$
\big| \, \mathbb{G}_{J_n+1}^{\, n} \, \big| \, + \, \big| \, \mathbb{G}_{J_n+2}^{\, n} \, \big| \, + \, \big| \, \mathbb{G}_{J_n+3}^{\, n} \, \big| \,
\le \, \dfrac{C}{\sqrt{n}} \, ,
$$
for some suitable constant $C>0$ that does not depend on $n \in \N^*$. Hence proving \eqref{limit1} amounts to proving:
\begin{equation}
\label{limit2}
\lim_{n \, \to \, + \, \infty} \, \, \dfrac{1}{n^{\, 1/8}} \, \, \sum_{j \, \le \, J_n} \, \big| \, \GGjn \, \big| \, = \, \ell \, .
\end{equation}

For $n \in \N^*$ and $j \le J_n$, we define the sampling point:
\begin{equation}
\label{defyjn}
y_{j,n} \, := \, \dfrac{| \, j \, - \, \alpha \, n \, |}{\sqrt{n}} \, > \, \dfrac{3}{\sqrt{n}} \, .
\end{equation}
In order to make the reading of some computations easier, we also define the positive parameters:
\begin{equation}
\label{defbeta01}
\beta_0 \, := \, \dfrac{c_4}{9 \, c_3^{\, 2}} \, ,\quad \beta_1 \, := \, \dfrac{2}{3 \, \sqrt{3 \, c_3}} \, .
\end{equation}
Then performing a change of variable in the integral in \eqref{defprincipal}, we obtain the formula:
\begin{multline}
\label{formuleyjn}
\forall \, n \in \N^* \, ,\quad \forall \, j \le J_n \, ,\quad \dfrac{1}{n^{\, 1/8}} \, \GGjn \, = \, \dfrac{\sqrt{3 \, \beta_1}}{\pi \, \sqrt{2}} \, \dfrac{1}{\sqrt{n}} \,
\dfrac{1}{y_{j,n}^{\, 1/4}} \, \exp \left( - \, \beta_0 \, y_{j,n}^{\, 2} \, \right) \, \times \\
\cos \left( \beta_1 \, n^{\, 1/4} \, y_{j,n}^{\, 3/2} \, - \, \dfrac{\pi}{4} \right) \,
\int_{-\sqrt{3 \, \beta_1} \, n^{\, 1/8} \, y_{j,n}^{\, 3/4}}^{\sqrt{3 \, \beta_1} \, n^{\, 1/8} \, y_{j,n}^{\, 3/4}} \, {\rm e}^{\, - \, u^{\, 2}} \, {\rm d}u \, .
\end{multline}
The formula \eqref{formuleyjn} clarifies the role of the scale $n^{\, -1/2}$ that corresponds to the step in a Riemann sum. Our first task is to
simplify the formula \eqref{formuleyjn} by approximating the integral of the Gaussian function by $\sqrt{\pi}$ (which corresponds to taking the
limit $n \to +\infty$ for any fixed $y_{j,n}>0$). A precise statement is the following.

\begin{lemma}
\label{lemA1}
With the sampling points $y_{j,n}$ given in \eqref{defyjn}, let us define the sequence:
$$
\forall \, n \in \N^* \, ,\quad \forall \, j \le J_n \, ,\quad H_j^n \, := \, \sqrt{\dfrac{3 \, \beta_1}{2 \, \pi}} \, \dfrac{1}{\sqrt{n}} \,
\dfrac{1}{y_{j,n}^{\, 1/4}} \, \exp \left( - \, \beta_0 \, y_{j,n}^{\, 2} \, \right) \,
\cos \left( \beta_1 \, n^{\, 1/4} \, y_{j,n}^{\, 3/2} \, - \, \dfrac{\pi}{4} \right) \, .
$$
Then there holds:
\begin{equation}
\label{limitlemA1}
\lim_{n \, \to \, + \, \infty} \, \sum_{j \le J_n} \, \Big| \, \dfrac{1}{n^{\, 1/8}} \, \GGjn \, - \, H_j^n \, \Big| \, = \, 0 \, .
\end{equation}
Consequently, \eqref{limit2} holds if and only if:
\begin{equation}
\label{limit3}
\lim_{n \, \to \, + \, \infty} \, \, \sum_{j \, \le \, J_n} \, \big| \, H_j^n \, \big| \, = \, \ell \, .
\end{equation}
\end{lemma}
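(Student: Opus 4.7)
The plan is to exploit the factorization suggested by \eqref{formuleyjn}. Setting $V_{j,n} := \sqrt{3\,\beta_1}\,n^{\, 1/8}\,y_{j,n}^{\, 3/4}$, so that $V_{j,n}^{\, 2}=3\,\beta_1\,n^{\, 1/4}\,y_{j,n}^{\, 3/2}$, one has the identity
\begin{equation*}
\dfrac{\GGjn}{n^{\, 1/8}} \, - \, H_j^n \, = \, \dfrac{\sqrt{3\,\beta_1}}{\pi\,\sqrt{2}}\,\dfrac{{\rm e}^{-\,\beta_0\,y_{j,n}^{\, 2}}}{\sqrt{n}\,y_{j,n}^{\, 1/4}}\,\cos \! \left( \beta_1\,n^{\, 1/4}\,y_{j,n}^{\, 3/2} \, - \, \dfrac{\pi}{4} \right)\,\left( \int_{-V_{j,n}}^{V_{j,n}} {\rm e}^{-\,u^{\, 2}}\,{\rm d}u \, - \, \sqrt{\pi}\,\right) \, ,
\end{equation*}
since $\sqrt{3\,\beta_1/(2\,\pi)}\,=\,\sqrt{\pi}\,\cdot\,\sqrt{3\,\beta_1}/(\pi\,\sqrt{2})$. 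Bounding the cosine and the factor ${\rm e}^{-\,\beta_0\,y_{j,n}^{\, 2}}$ by $1$ and combining the two standard tail controls $2\,\int_V^{+\infty}{\rm e}^{-\,u^{\, 2}}\,{\rm d}u \, \le \, \sqrt{\pi}$ and $2\,\int_V^{+\infty}{\rm e}^{-\,u^{\, 2}}\,{\rm d}u \, \le \, V^{-\, 1}\,{\rm e}^{-\,V^{\, 2}}$ (valid for $V>0$), one obtains the uniform estimate
\begin{equation*}
\left| \, \dfrac{\GGjn}{n^{\, 1/8}} \, - \, H_j^n \, \right| \, \le \, \dfrac{C}{\sqrt{n}\,y_{j,n}^{\, 1/4}}\,\min \! \left( 1 \, , \, \dfrac{{\rm e}^{-\,V_{j,n}^{\, 2}}}{V_{j,n}} \right) \, .
\end{equation*}

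I then split the sum over $j \le J_n$ at the threshold $V_{j,n}=1$, equivalently at $y_{j,n}=y_\star(n):=(3\,\beta_1)^{-\,2/3}\,n^{-\,1/6}$. Recall that the sampling points $y_{j,n}$ are uniformly spaced with step $1/\sqrt{n}$ and satisfy $y_{j,n} \ge 3/\sqrt{n}$. On the low-frequency block $y_{j,n} \le y_\star(n)$, I retain only the first argument of the minimum; since $y \mapsto y^{-\, 1/4}$ is decreasing, an upper Riemann-sum comparison produces a bound $C\,\int_0^{y_\star(n)}\,y^{-\, 1/4}\,{\rm d}y \, = \, (4\,C/3)\,y_\star(n)^{\, 3/4} \, = \, O(n^{-\, 1/8})$. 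On the high-frequency block $y_{j,n} \ge y_\star(n)$, the second argument of the minimum gives
\begin{equation*}
\left| \, \dfrac{\GGjn}{n^{\, 1/8}} \, - \, H_j^n \, \right| \, \le \, \dfrac{C}{n^{\, 5/8}\,y_{j,n}}\,{\rm e}^{-\,3\,\beta_1\,n^{\, 1/4}\,y_{j,n}^{\, 3/2}} \, ,
\end{equation*}
and the same Riemann-sum comparison (the integrand is again decreasing in $y$) bounds the corresponding partial sum by $(C/n^{\, 1/8}) \int_{y_\star(n)}^{+\infty}\,y^{-\, 1}\,{\rm e}^{-\,3\,\beta_1\,n^{\, 1/4}\,y^{\, 3/2}}\,{\rm d}y$. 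The substitution $u:=n^{\, 1/4}\,y^{\, 3/2}$ sends the lower endpoint to the $n$-independent constant $u_\star:=(3\,\beta_1)^{-\, 1}$ and turns the integral into $(2/3) \int_{u_\star}^{+\infty}\,u^{-\, 1}\,{\rm e}^{-\,3\,\beta_1\,u}\,{\rm d}u$, which is a finite quantity. Both blocks therefore contribute $O(n^{-\, 1/8})$, which proves \eqref{limitlemA1}.

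The implication \eqref{limitlemA1}\,$\Rightarrow$\,(\eqref{limit2}\,$\Leftrightarrow$\,\eqref{limit3}) is then immediate from the reverse triangle inequality $\big|\,\sum_{j \le J_n}\,|\GGjn|/n^{\, 1/8}\,-\,\sum_{j \le J_n}\,|H_j^n|\,\big| \, \le \, \sum_{j \le J_n}\,\big|\,n^{-\, 1/8}\,\GGjn \, - \, H_j^n\,\big|$, whose right-hand side vanishes by \eqref{limitlemA1}. I do not foresee a genuine obstacle; the whole argument reduces to a Riemann-sum comparison at step $1/\sqrt{n}$ combined with a splitting threshold matched to the saddle-point width $V_{j,n} \sim 1$. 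The only mildly delicate point is that the weight $y^{-\, 1/4}$ on the low-frequency block is singular at $y=0$, but the monotonicity of the integrand together with the strict lower bound $y_{j,n} \ge 3/\sqrt{n}$ keeps the comparison uniformly valid in $n$.
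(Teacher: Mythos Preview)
Your argument is correct and follows the same overall plan as the paper: factor the difference via \eqref{formuleyjn}, split the sum according to the size of $V_{j,n}$, and compare each block with an integral using the $1/\sqrt{n}$ spacing of the $y_{j,n}$. The low-frequency block is handled identically in both proofs.

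The only noteworthy difference is on the high-frequency block. The paper retains the Gaussian envelope $e^{-\beta_0 y_{j,n}^{2}}$ and uses a qualitative $\varepsilon$-argument: it fixes $M_\varepsilon$ so that $2\int_{M_\varepsilon}^{\infty}e^{-u^{2}}\,\mathrm{d}u$ is small, and summability over $j$ then comes from $\int_0^{\infty} y^{-1/4}e^{-\beta_0 y^{2}}\,\mathrm{d}y<\infty$. You instead discard $e^{-\beta_0 y_{j,n}^{2}}$ but exploit the sharper quantitative tail estimate $2\int_V^{\infty}e^{-u^{2}}\,\mathrm{d}u\le V^{-1}e^{-V^{2}}$; summability now comes from the $n$-dependent exponential $e^{-3\beta_1 n^{1/4}y^{3/2}}$, and the substitution $u=n^{1/4}y^{3/2}$ makes the resulting integral uniformly bounded. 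Your route has the advantage of giving an explicit rate $O(n^{-1/8})$ on both blocks rather than a bare $o(1)$, at no extra cost.
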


\begin{proof}[Proof of Lemma \ref{lemA1}]
For later use, we introduce the positive constant:
$$
\frak{c} \, := \, \int_0^{+\infty} \, y^{\, - \, 1/4} \, {\rm e}^{\, - \, \beta_0 \, y^{\, 2}} \, {\rm d}y \, .
$$
We consider $\varepsilon>0$ and fix some positive real number $M_\varepsilon>0$ such that:
$$
2 \, \int_{M_\varepsilon}^{+\infty} \, {\rm e}^{\, - \, u^{\, 2}} \, {\rm d}u \, \le \, \dfrac{\varepsilon}{2 \, \frak{c}} \, \dfrac{\pi \, \sqrt{2}}{\sqrt{3 \, \beta_1}} \, .
$$
For $n \in \N^*$ and $j \le J_n$ that satisfies:
$$
\sqrt{3 \, \beta_1} \,  \, n^{\, 1/8} \, y_{j,n}^{\, 3/4} \, \ge \, M_\varepsilon \, ,
$$
we thus have:
\begin{align*}
\Big| \, \dfrac{1}{n^{\, 1/8}} \, \GGjn \, - \, H_j^n \, \Big| \, & \le \, \dfrac{\sqrt{3 \, \beta_1}}{\pi \, \sqrt{2}} \, \dfrac{1}{\sqrt{n}} \,
\dfrac{1}{y_{j,n}^{\, 1/4}} \, \exp \left( - \, \beta_0 \, y_{j,n}^{\, 2} \, \right) \times 2 \, \int_{M_\varepsilon}^{+\infty} \, {\rm e}^{\, - \, u^{\, 2}} \, {\rm d}u \\
& \le \, \dfrac{\varepsilon}{2 \, \frak{c}} \, \dfrac{1}{\sqrt{n}} \, \dfrac{1}{y_{j,n}^{\, 1/4}} \, \exp \left( - \, \beta_0 \, y_{j,n}^{\, 2} \, \right) \, .
\end{align*}
Summing with respect to $j$, we obtain (here we use again a comparison principle between a Riemann sum and the integral of a nonincreasing function):
$$
\sum_{j \in \mathcal{J}_{\varepsilon,n}} \, \Big| \, \dfrac{1}{n^{\, 1/8}} \, \GGjn \, - \, H_j^n \, \Big| \, \le \, \dfrac{\varepsilon}{2} \, ,
$$
where the set of indices $\mathcal{J}_{\varepsilon,n}$ is defined as:
$$
\mathcal{J}_{\varepsilon,n} \, := \, \Big\{ \, j \, \le \, J_n \, / \, \sqrt{3 \, \beta_1} \,  \, n^{\, 1/8} \, y_{j,n}^{\, 3/4} \, \ge \, M_\varepsilon \, \Big\} \, .
$$
It remains to bound from above the remaining sum:
$$
\sum_{j \not \in \mathcal{J}_{\varepsilon,n}} \, \Big| \, \dfrac{1}{n^{\, 1/8}} \, \GGjn \, - \, H_j^n \, \Big| \, ,
$$
and we observe that the condition $j \not \in \mathcal{J}_{\varepsilon,n}$ is equivalent to requiring:
$$
y_{j,n} \, < \, \left( \dfrac{M_\varepsilon}{\sqrt{3 \, \beta_1}} \right)^{\, 4/3} \, n^{\, - \, 1/6} \, .
$$
Applying the triangle inequality, we thus obtain:
\begin{align*}
\sum_{j \not \in \mathcal{J}_{\varepsilon,n}} \, \Big| \, \dfrac{1}{n^{\, 1/8}} \, \GGjn \, - \, H_j^n \, \Big| \, & \le \,
\sum_{j \not \in \mathcal{J}_{\varepsilon,n}} \, \Big| \, \dfrac{1}{n^{\, 1/8}} \, \GGjn \, \Big| \, + \, \Big| \, H_j^n \, \Big| \\
& \le \, 2 \, \sqrt{\dfrac{3 \, \beta_1}{2 \, \pi}} \, \dfrac{1}{\sqrt{n}} \, \dfrac{1}{y_{j,n}^{\, 1/4}} \, \le \,
2 \, \sqrt{\dfrac{3 \, \beta_1}{2 \, \pi}} \, \int_0^{\left( \frac{M_\varepsilon}{\sqrt{3 \, \beta_1}} \right)^{\, 4/3} \, n^{\, - \, 1/6}} \, \dfrac{{\rm d}y}{y^{\, 1/4}} \, .
\end{align*}
Choosing $n \ge N_\varepsilon$ for some appropriate $N_\varepsilon \in \N^*$, we thus obtain:
$$
\sum_{j \not \in \mathcal{J}_{\varepsilon,n}} \, \Big| \, \dfrac{1}{n^{\, 1/8}} \, \GGjn \, - \, H_j^n \, \Big| \, \le \, \dfrac{\varepsilon}{2} \, ,
$$
and the claim \eqref{limitlemA1} follows.

The conclusion of Lemma \ref{lemA1} on the equivalence between \eqref{limit2} and \eqref{limit3} follows from the triangle inequality.
\end{proof}

\noindent In view of Lemma \ref{lemA1}, we now wish to prove that \eqref{limit3} holds, where the sequence $(H_j^n)_{j \le J_n}$ is defined in
Lemma \ref{lemA1}. A final simplification amounts to substituting a discrete sum by an integral, which will allow us to perform change of variables
and other algebraic operations more easily. A precise statement is the following.

\begin{lemma}
\label{lemA2}
With the sampling points $y_{j,n}$ given in \eqref{defyjn} and the sequence $(H_j^n)_{j \le J_n}$ defined in Lemma \ref{lemA1}, there holds:
\begin{equation}
\label{limitlemA2}
\lim_{n \, \to \, + \, \infty} \, \left( \sum_{j \le J_n} \, \Big| \, H_j^n \, \Big| \, - \, \sqrt{\dfrac{3 \, \beta_1}{2 \, \pi}} \, \int_0^{+\infty} \,
\dfrac{{\rm e}^{\, - \, \beta_0 \, y^{\, 2}}}{y^{\, 1/4}} \, \Big| \, \cos \Big( \, \beta_1 \, n^{\, 1/4} \, y^{\, 3/2} \, - \, \dfrac{\pi}{4} \Big) \, \Big| \, {\rm d}y
\right) = \, 0 \, .
\end{equation}
Consequently, \eqref{limit3} holds if and only if:
\begin{equation}
\label{limit4}
\lim_{n \, \to \, + \, \infty} \, \sqrt{\dfrac{3 \, \beta_1}{2 \, \pi}} \, \int_0^{+\infty} \, \dfrac{{\rm e}^{\, - \, \beta_0 \, y^{\, 2}}}{y^{\, 1/4}} \,
\Big| \, \cos \Big( \, \beta_1 \, n^{\, 1/4} \, y^{\, 3/2} \, - \, \dfrac{\pi}{4} \Big) \, \Big| \, {\rm d}y \, = \, \ell \, .
\end{equation}
\end{lemma}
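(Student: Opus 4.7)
The plan is to view $S_n := \sum_{j \le J_n} |H_j^n|$ as a left Riemann sum, of mesh $1/\sqrt{n}$, for the integral $I_n$ on the right-hand side of \eqref{limitlemA2}. Setting $h_n(y) := \sqrt{3\beta_1/(2\pi)} \, y^{-1/4} \, e^{-\beta_0 y^2} \, |\cos(\beta_1 n^{1/4} y^{3/2} - \pi/4)|$, one has $|H_j^n| = n^{-1/2} h_n(y_{j,n})$, and as $j$ ranges over integers with $j \le J_n$ the sample points $y_{j,n}$ form an arithmetic progression of step exactly $1/\sqrt{n}$. Thus $S_n - I_n$ is a Riemann-sum error for the highly oscillatory integrand $h_n$, which I will analyse by truncation combined with a total-variation bound.

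The first step is to truncate to a compact interval $[\eta,R] \subset (0,+\infty)$. The uniform pointwise envelope $h_n(y) \le C \, y^{-1/4} \, e^{-\beta_0 y^2}$ is integrable on $(0,+\infty)$, so the tails $\int_0^\eta$ and $\int_R^{+\infty}$ of $I_n$ are bounded by $C(\eta^{3/4} + e^{-cR^2})$, which is arbitrarily small for $\eta$ small and $R$ large. The corresponding pieces of $S_n$ are controlled term by term against the same envelope: the near-zero sum $n^{-1/2} \sum_{k \ge 3,\; k/\sqrt{n} \le \eta} (k/\sqrt{n})^{-1/4}$ is asymptotic to $\tfrac{4}{3}\eta^{3/4}$ by direct computation, while the tail over $y_{j,n} > R$ is dominated by $\int_{R-1/\sqrt{n}}^{+\infty} y^{-1/4} e^{-\beta_0 y^2} \, dy$ via monotonicity of the envelope for $y$ large. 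Given $\varepsilon>0$, I fix $\eta$ small and $R$ large so that all four boundary contributions lie below $\varepsilon$.

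On the compact interval $[\eta,R]$, I apply the standard error bound: the left Riemann sum of mesh $h$ of a bounded-variation function $f$ differs from $\int_\eta^R f$ by at most $h \cdot V(f;[\eta,R])$. To bound $V(h_n;[\eta,R])$, I write $h_n$ as the product of the smooth envelope $y^{-1/4} e^{-\beta_0 y^2}$, whose total variation depends only on $\eta$ and $R$, and the oscillating factor $|\cos(\beta_1 n^{1/4} y^{3/2} - \pi/4)|$, whose argument is strictly increasing with derivative of order $n^{1/4}$ and therefore ranges over an interval of length $O(n^{1/4})$. Since $|\cos|$ has variation at most $2$ per half-period, the oscillating factor has variation $O(n^{1/4})$; the product rule $V(fg) \le \|f\|_\infty V(g) + \|g\|_\infty V(f)$ then yields $V(h_n;[\eta,R]) = O(n^{1/4})$, so the Riemann-sum discrepancy on $[\eta,R]$ is $O(n^{1/4} \cdot n^{-1/2}) = O(n^{-1/4})$, which tends to zero. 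Combining everything, $|S_n - I_n| \le 4\varepsilon + O(n^{-1/4})$ for $n$ large, yielding \eqref{limitlemA2}. The equivalence of \eqref{limit3} and \eqref{limit4} is then immediate by the triangle inequality.

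The main subtle point is the delicate balance in the bounded-variation step: the integrand $h_n$ oscillates on spatial scale $n^{-1/4}$, which is coarser than the sampling scale $n^{-1/2}$ by the factor $n^{1/4}$, so the product $V(h_n) \cdot (1/\sqrt{n}) = O(n^{-1/4})$ just beats zero. A faster oscillation would break this argument and force a more refined approach (for instance an explicit averaging of $|\cos|$ over each half-period, or a stationary-phase analysis). A secondary care is needed near the left endpoint $y = \eta$ to absorb the $y^{-1/4}$ singularity uniformly in $n$, but this follows cleanly from the comparison between the Riemann sum and integral of a decreasing envelope.
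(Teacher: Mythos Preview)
Your proof is correct and follows essentially the same route as the paper's: truncate to a compact interval $[\eta,R]$ using integrability of the envelope $y^{-1/4}e^{-\beta_0 y^2}$, then control the Riemann-sum error on that interval by $O(n^{-1/4})$ via the regularity of $h_n$. The only cosmetic difference is that the paper bounds each subinterval contribution using the Lipschitz constant $C_\varepsilon n^{1/4}$ of $h_n$ directly, whereas you package the same estimate through the total-variation bound $|S-I|\le h\cdot V(h_n)$; both yield the identical $n^{-1/2}\cdot O(n^{1/4})=O(n^{-1/4})$.
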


Let us assume for a moment that the conclusion of Lemma \ref{lemA2} holds and let us complete the proof of Corollary \ref{coro:2}. For ease of reading,
we define the sequence:
$$
\forall \, n \in \N^* \, ,\quad \mathbb{I}_n \, := \, \sqrt{\dfrac{3 \, \beta_1}{2 \, \pi}} \, \int_0^{+\infty} \, \dfrac{{\rm e}^{\, - \, \beta_0 \, y^{\, 2}}}{y^{\, 1/4}} \,
\Big| \, \cos \Big( \, \beta_1 \, n^{\, 1/4} \, y^{\, 3/2} \, - \, \dfrac{\pi}{4} \Big) \, \Big| \, {\rm d}y \, ,
$$
of which we aim at computing the limit. We perform a first change of variable $x = \beta_1 \, y^{\, 3/2}$ in the integral and obtain the relation:
$$
\mathbb{I}_n \, = \, \sqrt{\dfrac{2}{3 \, \pi}} \, \int_0^{+\infty} \, \dfrac{1}{x^{\, 1/2}} \, \exp \left( \, - \, \dfrac{\beta_0}{\beta_1^{\, 4/3}} \, x^{\, 4/3} \, \right) \,
\Big| \, \cos \Big( \, n^{\, 1/4} \, x \, - \, \dfrac{\pi}{4} \Big) \, \Big| \, {\rm d}x \, ,
$$
and we now use without proof the following classical result\footnote{The proof merely consists in approximating any $L^1$ function by a smooth compactly
supported function, then using the Fourier series expansion of $|\cos (u-\pi/4)|$ and integrating by parts.}:

\begin{lemma}
\label{lemA3}
Let $F \in L^1(\R;\C)$. Then there holds:
$$
\lim_{n \to +\infty} \, \int_\R \, F(x) \, \Big| \, \cos \Big( \, n^{\, 1/4} \, x \, - \, \dfrac{\pi}{4} \Big) \, \Big| \, {\rm d}x \, = \, \dfrac{2}{\pi} \, \int_\R \, F(x) \, {\rm d}x \, .
$$
\end{lemma}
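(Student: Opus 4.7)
The plan is to follow the hint in the footnote: density reduction, Fourier series expansion of $|\cos|$, and then Riemann--Lebesgue via integration by parts. First, I would observe that for each $n$ the map $F \mapsto \int_\R F(x) \, | \cos ( n^{\, 1/4} x - \pi/4 ) | \, {\rm d}x$ is bounded by $\| F \|_{L^1}$ uniformly in $n$, and likewise $F \mapsto (2/\pi) \int_\R F(x) \, {\rm d}x$ is bounded by $(2/\pi) \| F \|_{L^1}$. Since $\mathcal{C}_c^\infty(\R;\C)$ is dense in $L^1(\R;\C)$, a standard $\varepsilon/3$ argument reduces the claim to the case $F \in \mathcal{C}_c^\infty(\R;\C)$.

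Next, I would expand $v \mapsto | \cos v |$ in Fourier series. This function is $\pi$-periodic, continuous and piecewise $\mathcal{C}^\infty$ with jumps of its derivative only at $v \in \pi/2 + \pi \, \Z$, so its Fourier coefficients $(\gamma_k)_{k \in \Z}$ in the expansion
$$
| \cos v | \, = \, \dfrac{2}{\pi} \, + \, \sum_{k \neq 0} \, \gamma_k \, {\rm e}^{\, 2 \, \mathbf{i} \, k \, v} \, ,
$$
satisfy $| \gamma_k | \le C/k^{\, 2}$, and the series converges uniformly on $\R$. Substituting $v = n^{\, 1/4} x - \pi/4$ and exchanging sum and integral (legitimate by the uniform convergence combined with $F \in L^1$), I obtain
$$
\int_\R \, F(x) \, \Big| \cos \Big( n^{\, 1/4} x \, - \, \dfrac{\pi}{4} \Big) \Big| \, {\rm d}x \, = \, \dfrac{2}{\pi} \, \int_\R \, F(x) \, {\rm d}x \, + \,
\sum_{k \neq 0} \, \gamma_k \, {\rm e}^{\, - \, \mathbf{i} \, k \, \pi / 2} \, I_{n,k} \, ,
$$
where $I_{n,k} := \int_\R F(x) \, {\rm e}^{\, 2 \, \mathbf{i} \, k \, n^{\, 1/4} \, x} \, {\rm d}x$.

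Finally, since $F \in \mathcal{C}_c^\infty$, integrating by parts twice in $I_{n,k}$ gives the uniform bound $| I_{n,k} | \le \| F'' \|_{L^1} / (4 \, k^{\, 2} \, n^{\, 1/2})$ for every $k \neq 0$. Combined with $| \gamma_k | \le C/k^{\, 2}$, this yields a bound of order $n^{\, - \, 1/2} \, \sum_{k \neq 0} k^{\, - \, 4}$ for the whole remainder sum, which tends to $0$ as $n \to + \infty$. The conclusion follows for smooth compactly supported $F$, and hence, by the density reduction, for every $F \in L^1(\R;\C)$. There is no serious obstacle in this proof; the only mildly delicate point is the justification of the interchange of sum and integral, but the $1/k^{\, 2}$ decay of the Fourier coefficients of $| \cos |$ (and the resulting uniform convergence of its Fourier series) makes this routine.
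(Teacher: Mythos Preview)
Your proof is correct and follows precisely the sketch the paper gives in its footnote: density reduction to $\mathcal{C}_c^\infty$, Fourier expansion of $|\cos|$ (with constant term $2/\pi$ and coefficients of order $1/k^2$), and integration by parts to kill the oscillatory terms. Since the paper explicitly uses this lemma without proof and only hints at this very argument, your write-up is exactly what the paper intended.
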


We therefore obtain:
$$
\lim_{n \to +\infty} \, \mathbb{I}_n \, = \, \sqrt{\dfrac{2}{3 \, \pi}} \, \dfrac{2}{\pi} \, \int_0^{+\infty} \,
\dfrac{1}{x^{\, 1/2}} \, \exp \left( \, - \, \dfrac{\beta_0}{\beta_1^{\, 4/3}} \, x^{\, 4/3} \, \right) \, {\rm d}x \, ,
$$
and a final change of variable in the integral yields:
$$
\lim_{n \to +\infty} \, \mathbb{I}_n \, = \, \left( \dfrac{2}{\pi} \right)^{\, 3/2} \, \dfrac{1}{\sqrt{3}} \, \dfrac{\beta_1^{\, 1/2}}{\beta_0^{\, 3/8}} \, \dfrac{3}{4} \,
\Gamma (3/8) \, .
$$
Using the definition \eqref{defbeta01} of $\beta_0$ and $\beta_1$ and using the functional relation $x \, \Gamma(x) \, = \, \Gamma (x+1)$, we end up
proving (recall the definition \eqref{defl}):
$$
\lim_{n \to +\infty} \, \mathbb{I}_n \, = \, \ell \, ,
$$
which means that \eqref{limit4} holds. Going back all the way up, we have thus proved that \eqref{limit} holds. At this stage, it just remains to prove Lemma
\ref{lemA2}.

\begin{proof}[Proof of Lemma \ref{lemA2}]
The multiplicative factor $\sqrt{(3 \, \beta_1)/(2\, \pi)}$ in the definition of $H_j^n$ and in front of the integral in \eqref{limitlemA2} is harmless so we omit it from
now on. We let $\varepsilon>0$ and consider two positive numbers $A_\varepsilon,B_\varepsilon$ that satisfy:
$$
\int_0^{A_\varepsilon} \, \dfrac{{\rm e}^{\, - \, \beta_0 \, y^{\, 2}}}{y^{\, 1/4}} \, {\rm d}y \, + \,
\int_{B_\varepsilon}^{+\infty} \, \dfrac{{\rm e}^{\, - \, \beta_0 \, y^{\, 2}}}{y^{\, 1/4}} \, {\rm d}y \, \le \, \dfrac{\varepsilon}{3} \, .
$$
By comparing the Riemann sums below with integrals, we therefore also get:
$$
\sum_{j \le J_n \, / \, y_{j,n} \le A_\varepsilon} \, \dfrac{1}{\sqrt{n}} \, y_{j,n}^{\, - \, 1/4} \, {\rm e}^{\, - \, \beta_0 \, y_{j,n}^{\, 2}} \, + \,
\sum_{j \le J_n \, / \, y_{j,n} \ge B_\varepsilon+n^{\, -\, 1/2}} \, \dfrac{1}{\sqrt{n}} \, y_{j,n}^{\, - \, 1/4} \, {\rm e}^{\, - \, \beta_0 \, y_{j,n}^{\, 2}}
\, \le \, \dfrac{\varepsilon}{3} \, .
$$
In particular, we get:
$$
\sum_{j \le J_n \, / \, y_{j,n} \le A_\varepsilon} \, \big| \, H_j^n \, \big| \, + \,
\sum_{j \le J_n \, / \, y_{j,n} \ge B_\varepsilon+n^{\, -\, 1/2}} \, \big| \, H_j^n \, \big| \, \le \, \dfrac{\varepsilon}{3} \, .
$$
We therefore focus on the difference:
$$
\sum_{j \le J_n \, / \, A_\varepsilon < y_{j,n}<B_\varepsilon+n^{\, -\, 1/2}} \, \Big| \, H_j^n \, \Big| \, - \, \int_{A_\varepsilon}^{B_\varepsilon} \,
\dfrac{{\rm e}^{\, - \, \beta_0 \, y^{\, 2}}}{y^{\, 1/4}} \, \Big| \, \cos \Big( \, \beta_1 \, n^{\, 1/4} \, y^{\, 3/2} \, - \, \dfrac{\pi}{4} \Big) \, \Big| \, {\rm d}y \, ,
$$
since the remaining parts on the left hand side of \eqref{limitlemA2} are estimated (in absolute value) by $2\, \varepsilon/3$. We recall that throughout this
proof we omit the multiplicative factor $\sqrt{(3 \, \beta_1)/(2\, \pi)}$ in the definition of $H_j^n$.

We introduce the function:
$$
f_n \, : \, y \in (0,+\infty) \, \longmapsto \, \dfrac{{\rm e}^{\, - \, \beta_0 \, y^{\, 2}}}{y^{\, 1/4}} \,
\Big| \, \cos \Big( \, \beta_1 \, n^{\, 1/4} \, y^{\, 3/2} \, - \, \dfrac{\pi}{4} \Big) \, \Big| \, .
$$
Then $f_n$ is Lipshitzean on the interval $[A_\varepsilon,B_\varepsilon+2]$ and, more precisely, there exists a constant $C_\varepsilon>0$ that depends on
$\varepsilon$ but not on $n \in \N^*$, and such that:
$$
\forall \, x,y \in [A_\varepsilon,B_\varepsilon+2] \, ,\qquad \big| \, f_n(x) \, - \, f_n(y) \, \big| \, \le \, C_\varepsilon \, n^{\, 1/4} \, | \, x \, - \, y \, | \, .
$$
We thus get the bound:
\begin{equation}
\label{ineglemA2}
\Big| \,  | \, H_j^n \, | \, - \, \int_{y_{j,n}}^{y_{j-1,n}} \, f_n(y) \, {\rm d}y \, \Big| \, = \, \Big| \, \int_{y_{j,n}}^{y_{j-1,n}} \, f_n(y_{j,n}) \, - \, f_n(y) \, {\rm d}y \, \Big| \, \le \,
C_\varepsilon \, n^{\, - \, 3/4} \, ,
\end{equation}
as long as $y_{j,n}$ satisfies:
\begin{equation}
\label{conditionyjn}
A_\varepsilon \, < \, y_{j,n} \, < \, B_\varepsilon \, + \, n^{\, -\, 1/2} \, .
\end{equation}
Of course, the constant $C_\varepsilon$ in \eqref{ineglemA2} is independent of $n \in \N^*$. Summing \eqref{ineglemA2} over $j$, we get:
$$
\left| \, \sum_{j \le J_n \, / \, A_\varepsilon < y_{j,n}<B_\varepsilon+n^{\, -\, 1/2}} \, \Big| \, H_j^n \, \Big| \, - \,
\int_{y_{J_{\max},n}}^{y_{J_{\min}-1,n}} \, f_n(y) \, {\rm d}y \, \right| \, \le \, C_\varepsilon \, n^{\, - \, 1/4} \, ,
$$
where the interval of indices $[J_{\min},J_{\max}] \cap \Z$ corresponds to the set of integers $j$ such that \eqref{conditionyjn} is satisfied. It is then easy to
verify that the quantity:
$$
\int_{A_\varepsilon}^{B_\varepsilon} \, f_n(y) \, {\rm d}y \, - \, \int_{y_{J_{\max},n}}^{y_{J_{\min}-1,n}} \, f_n(y) \, {\rm d}y
$$
tends to zero as $n$ tends to infinity so overall, we can fix an integer $N_\varepsilon \in \N^*$ such that for any $n \ge N_\varepsilon$, there holds:
$$
\left| \, \sum_{j \le J_n \, / \, A_\varepsilon < y_{j,n}<B_\varepsilon+n^{\, -\, 1/2}} \, \Big| \, H_j^n \, \Big| \, - \,
\int_{A_\varepsilon}^{B_\varepsilon} \, f_n(y) \, {\rm d}y \, \right| \, \le \, \dfrac{\varepsilon}{3} \, .
$$
This gives the final bound:
$$
\left| \, \sum_{j \le J_n} \, \Big| \, H_j^n \, \Big| \, - \, \int_0^{+\infty} \, f_n(y) \, {\rm d}y \, \right| \, \le \, \varepsilon \, ,
$$
for $n \ge N_\varepsilon$ so Lemma \ref{lemA2} is proved.
\end{proof}
\end{proof}

Let us see what Corollary \ref{coro:2} gives in the case of the Lax-Wendroff scheme \eqref{LW}. Recalling \eqref{TaylorLW}, we have:
$$
c_3 \, = \, \dfrac{\lambda \, (1 \, - \, \lambda^{\, 2})}{6} \, ,\qquad c_4 \, = \, \dfrac{\lambda^{\, 2} \, (1 \, - \, \lambda^{\, 2})}{8} \, ,
$$
which means that for any $\lambda \in (0,1)$, the Green's function of the Lax-Wendroff scheme \eqref{LW} satisfies:
$$
\lim_{n \, \to \, + \, \infty} \, \, \dfrac{1}{n^{\, 1/8}} \, \, \sum_{j \in \Z} \, \big| \, \Gjn \, \big| \, = \, \dfrac{2^{\, 5/8} \, \Gamma (3/8)}{\pi^{\, 3/2}} \,
\dfrac{(1 \, - \, \lambda^{\, 2})^{\, 1/8}}{\lambda^{\, 1/4}} \, .
$$

\subsection{Uniform bounds for initial data of bounded variations}

In Corollary \ref{coro:3} below, we let $\text{\rm BV}(\Z;\C)$ denote the space of complex valued sequences that have bounded variations, that is
$u \in \text{\rm BV}(\Z;\C)$ if the quantity:
$$
\sum_{j \in \Z} \, | \, u_{j+1} \, - \, u_j \, |
$$
is finite. We use without proof that any sequence with bounded variations has a finite limit at $-\infty$ (the same property holds at $+\infty$). Our result
dates back to \cite{ELR} but our proof differs from the one in that reference since we have an accurate description of the Green's function at our disposal.

\begin{corollary}[Estep-Loss-Rauch]
\label{coro:3}
Let $a \in \ell^{\, 1}(\Z;\C)$ satisfy Assumption \ref{hyp:1}. Then there exists a constant $C>0$ such that for any sequence $u \in \text{\rm BV}(\Z;\C)$
that satisfies $\lim_{j \to -\infty} u_j=0$, the Laurent operator $L_a$ satisfies:
$$
\sup_{n \in \N} \, \, \Big\| \, L_a^{\, n} \, u \, \Big\|_{\ell^{\, \infty}} \, \le \, C \, \sum_{j \in \Z} \, | \, u_{j+1} \, - \, u_j \, | \, .
$$
\end{corollary}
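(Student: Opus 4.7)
My plan is to reduce the claim to a uniform estimate on the cumulative partial sums of the Green's function, and then to extract that estimate from Theorem~\ref{thm:1} and Corollary~\ref{coro:1}. Set $v_j := u_{j+1} - u_j$, so $v \in \ell^{\, 1}(\Z;\C)$ with $\| v \|_{\ell^{\, 1}} = \sum_j | u_{j+1} - u_j |$. The hypothesis $\lim_{j \to -\infty} u_j = 0$ yields the telescoping identity $u_j = \sum_{m \le j-1} v_m$. Substituting it into $(L_a^{\, n} u)_j = \sum_\ell \mathcal{G}^{\, n}_\ell \, u_{j-\ell}$ and swapping the two summations (legitimate since $v \in \ell^{\, 1}$ and $\mathcal{G}^{\, n}$ has finite support), one obtains
$$
(L_a^{\, n} u)_j \, = \, \sum_{m \in \Z} v_m \, T^n_{j-m-1} \, , \qquad T^n_p \, := \, \sum_{\ell \le p} \mathcal{G}^{\, n}_\ell \, .
$$
The corollary thus reduces to proving $\sup_{n \in \N^*, \, p \in \Z} | T^n_p | \le C$ for some constant $C$ independent of $(n,p)$.

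To prove this uniform bound I would exploit the conservativity relation $\sum_\ell \mathcal{G}^{\, n}_\ell = \widehat{F}_a(0)^n = 1$ from Assumption~\ref{hyp:1} to switch freely between a left and a right tail. In the regime $p - \alpha n \ge 0$, the identity $T^n_p = 1 - \sum_{\ell > p} \mathcal{G}^{\, n}_\ell$ combined with the summability estimate \eqref{boundcoro1-1} immediately yields $| T^n_p | \le 1 + C$. In the regime $p - \alpha n < 0$, I would decompose
$$
T^n_p \, = \, \sum_{\ell \le p} \big( \mathcal{G}^{\, n}_\ell - \mathbb{G}^{\, n}_\ell \big) \, + \, \sum_{\ell \le p} \mathbb{G}^{\, n}_\ell \, ,
$$
whose first piece is controlled uniformly by \eqref{boundcoro1-2}. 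The remaining work is then entirely concentrated on bounding the partial sums of the oscillatory approximate Green's function \eqref{defprincipal} uniformly in $n$ and in $p < \alpha n$.

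For those partial sums I would split along the natural threshold $n^{1/3}$ that recurs throughout Section~\ref{section2}. When $|p - \alpha n| \lesssim n^{1/3}$, the pointwise bound $| \mathbb{G}^{\, n}_\ell | \le C / n^{1/3}$ (inherited from Proposition~\ref{prop1} together with \eqref{boundcoro1-2}) applied to $O(n^{1/3})$ indices already gives an $O(1)$ contribution. When $|p - \alpha n| \gg n^{1/3}$, where the cosine factor of \eqref{defprincipal} oscillates rapidly, I would compare the discrete sum to an integral of the corresponding smooth profile and integrate by parts against the cosine. Writing $x = \ell - \alpha n$, the phase derivative of $\tfrac{2 |x|^{3/2}}{3 \sqrt{3 c_3 n}}$ is of order $\sqrt{|x| / (c_3 n)}$, so integration by parts converts the slow-decay amplitude $(n |x|)^{-1/4}$ into a boundary term of order $n^{1/4} / |x|^{3/4}$, which is uniformly bounded exactly when $|x| \gtrsim n^{1/3}$, while the Gaussian damping $\exp(-c_4 x^2 / (9 c_3^2 n))$ guarantees convergence of the resulting integral at the scale $|x| \sim \sqrt{n}$.

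The main obstacle is this final oscillatory estimate. Despite the cancellations provided by the cosine, the amplitude decays only like $|x|^{-1/4}$ (up to powers of $n$), and the analysis must simultaneously control (i) the Riemann-sum-to-integral error, which requires differentiability information on the smooth profile underlying \eqref{defprincipal}, (ii) the matching between the ``trivial'' and ``oscillatory'' regimes across $|x| \sim n^{1/3}$, and (iii) the convergence at infinity via the Gaussian factor. Once $\sup_{n, p} | T^n_p | \le C$ is established, the bound $\| L_a^{\, n} u \|_{\ell^{\, \infty}} \le C \, \| v \|_{\ell^{\, 1}}$ follows at once from the opening identity, which completes the proof.
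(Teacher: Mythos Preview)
Your reduction is exactly the paper's: decomposing $u$ via Heaviside shifts is the same as your telescoping identity, and both collapse the corollary to the uniform bound $\sup_{n,p}\big|\sum_{\ell\le p}\mathcal{G}^{\,n}_\ell\big|<\infty$ (the paper isolates this as Lemma~\ref{lemA4}). The use of conservativity together with \eqref{boundcoro1-1} for $p-\alpha n\ge 0$, and of \eqref{boundcoro1-2} to pass from $\mathcal{G}$ to $\mathbb{G}$ for $p-\alpha n<0$, is likewise the same.

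The one substantive difference is in the final oscillatory step. You propose to approximate the discrete sum of $\GGjn$ by an integral and then integrate by parts against the cosine, which creates the Riemann--sum error you flag as the ``main obstacle.'' The paper sidesteps this entirely by performing the integration by parts \emph{discretely}: it rewrites $\GGjn$ in the form \eqref{formule''yjn} so that the oscillatory factor appears as $g\big(n^{1/4}y_{j,n}^{3/2}\big)$ times a discrete derivative, observes that the primitive $\widetilde g(x)=\int_0^x g$ is bounded on $\R^+$, and applies Abel summation against the smooth Gaussian envelope $\exp(-\beta_0 y_{j,n}^2)$. The Taylor remainder from replacing the exact difference $\widetilde g\big(n^{1/4}y_{j,n}^{3/2}\big)-\widetilde g\big(n^{1/4}y_{j+1,n}^{3/2}\big)$ by the leading term is then shown to be absolutely summable. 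This direct discrete approach is cleaner than yours and removes your obstacles (i) and (ii) at once; you may want to adopt it.
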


\noindent Corollary \ref{coro:3} explains why on Figure \ref{fig:dispersion} the oscillating wave packets generated by the step function remain bounded in
the $\ell^{\, \infty}$ norm. For general $\text{\rm BV}$ initial data that have nonzero limit at $-\infty$, one should apply Corollary \ref{coro:3} to $u \, - \,
\lim_{-\infty}u$ (we recall that $L_a$ maps any constant sequence $v$ to itself since the $a_j$'s sum to $1$).

\begin{proof}[Proof of Corollary \ref{coro:3}]
The key point in the proof of Corollary \ref{coro:3} is the following bound which shows that the oscillations in the Green's function cancel after integration.

\begin{lemma}
\label{lemA4}
Let Assumption \ref{hyp:1} be satisfied. Then there exists a constant $C>0$ such that the Green's function $(\Gjn)_{(n,j) \in \N \times \Z}$ satisfies the uniform
bound:
\begin{equation}
\label{boundintegral}
\forall \, n \in \N^* \, ,\quad \forall \, j \in \Z \, ,\quad \left| \, \sum_{\ell \le j} \, \mathcal{G}_\ell^{\, n} \, \right| \, \le \, C \, .
\end{equation}
\end{lemma}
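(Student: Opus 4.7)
The plan is to exploit the normalization identity $\sum_{\ell \in \Z} \mathcal{G}_\ell^{\,n} = \widehat{F}_a(0)^n = 1$ (which follows from Assumption \ref{hyp:1}) together with the sharp bounds from Theorem \ref{thm:1} and Corollary \ref{coro:1}. When $j \, - \, \alpha \, n \, \ge \, 0$, I would write $\sum_{\ell \le j} \mathcal{G}_\ell^{\,n} = 1 - \sum_{\ell > j} \mathcal{G}_\ell^{\,n}$ and bound the tail by $\sum_{\ell - \alpha n \ge 0} |\mathcal{G}_\ell^{\,n}| \le C$ using \eqref{boundcoro1-1}. When $j \, - \, \alpha \, n \, < \, 0$, I would split $\mathcal{G}_\ell^{\,n} = \mathbb{G}_\ell^{\,n} + (\mathcal{G}_\ell^{\,n} - \mathbb{G}_\ell^{\,n})$ for $\ell < \alpha n$, use \eqref{boundcoro1-2} to absorb the difference, and reduce the problem to the uniform estimate
\[
\sup_{n \in \N^* , \, j \, < \, \alpha \, n} \, \left| \, \sum_{\ell \le j} \mathbb{G}_\ell^{\,n} \, \right| \, < \, + \infty.
\]

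To control this oscillatory partial sum, I would compare it to a continuous integral via an Euler-Maclaurin type summation formula. Extending \eqref{defprincipal} to a piecewise smooth function $\mathbb{G}^{\,n}(x)$ of a real variable, one has
\[
\sum_{\ell \le j} \mathbb{G}_\ell^{\,n} \, = \, \int_{-\infty}^{j + 1/2} \mathbb{G}^{\,n}(x) \, \md x \, + \, \mathcal{R}_j^n,
\quad \text{with} \quad
| \, \mathcal{R}_j^n \, | \, \le \, \frac{1}{2} \, \| \, (\mathbb{G}^{\,n})' \, \|_{L^1(\R)}.
\]
The scaling $u = (\alpha n - x)/\sqrt{n}$ shows that $\| (\mathbb{G}^{\,n})' \|_{L^1}$ is uniformly bounded: away from the edge $u = 0$, the oscillation derivative is $\sim n^{-1/8}$, while the envelope derivative is even smaller; near $u = 0$, the Fresnel integral $\int_{-T_\ell}^{T_\ell} \mathrm{e}^{-B_\ell v^2} \md v$ behaves like $2 T_\ell \sim u^{1/2}/n^{1/4}$ and a separate direct bound of the total variation in this turning-point layer $u \lesssim n^{-1/6}$ yields an $O(n^{-1/3})$ contribution.

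The remaining task is to bound the integral. After the scaling $u = (\alpha n - x)/\sqrt{n}$ and the asymptotics of the Fresnel integral defining $\mathbb{G}_\ell^{\,n}$, the integral reduces (up to lower order terms) to
\[
\frac{n^{1/8}}{\sqrt{\pi} \, (3 \, c_3)^{1/4}} \, \int_a^{+\infty} \, \frac{\mathrm{e}^{- \beta_0 \, u^2}}{u^{1/4}} \, \cos \! \left( \tilde{B}_n \, u^{3/2} \, - \, \frac{\pi}{4} \right) \md u,
\]
with $a = (\alpha n - j)/\sqrt{n}$, $\beta_0 = c_4/(9\, c_3^{\,2})$ and $\tilde{B}_n = \frac{2}{3 \sqrt{3 \, c_3}} \, n^{1/4}$. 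For $u \ge 1$, the van der Corput first derivative test (since $\frac{3}{2}\tilde{B}_n u^{1/2}$ is monotone and bounded below by a multiple of $n^{1/4}$) gives a bound $O(n^{-1/4})$; combined with the $n^{1/8}$ prefactor, this region contributes $O(n^{-1/8})$. For $u \in [a,1]$, I would apply the Airy substitution $u = w/n^{1/6}$ which turns the phase into the $n$-independent expression $\frac{2}{3\sqrt{3\, c_3}} \, w^{3/2}$; the integral becomes $\int_{a \, n^{1/6}}^{n^{1/6}} w^{-1/4} \cos(\cdots) \, \md w$, integrable at $w = 0$ (since $w^{-1/4}$ is integrable) and conditionally convergent at infinity by Dirichlet's test, with a bound uniform in $a \ge 0$ and $n$.

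The hard part is step three: rigorously separating the Airy turning-point layer $\alpha n - j \lesssim n^{1/3}$ from the bulk oscillatory regime, and simultaneously controlling the total variation of $\mathbb{G}^{\,n}$ in the same layer where both the Fresnel asymptotics of the envelope and the relevant stationary phase regime in $u$ change character. This requires tracking several competing scales ($n^{-1/6}, \, n^{-1/4}, \, n^{-1/3}$) and using tailored estimates for the Fresnel integral in each regime, but once this is done the uniform bound \eqref{boundintegral} follows by combining the three steps.
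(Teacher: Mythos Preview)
Your reduction to bounding $\sum_{\ell \le j} \mathbb{G}_\ell^{\,n}$ via Corollary \ref{coro:1} and the normalization $\sum_\ell \mathcal{G}_\ell^{\,n}=1$ matches the paper exactly. From that point on, however, the two arguments diverge. The paper stays entirely on the discrete side and uses an Abel transformation: writing $\GGjn$ in the form \eqref{formule''yjn}, it recognizes the factor $(y_{j,n}-y_{j+1,n})\,\tfrac{3}{2}\,n^{1/4}\,y_{j,n}^{1/2}\,g(n^{1/4}y_{j,n}^{3/2})$ as a discrete derivative of $\widetilde g(n^{1/4}y_{j,n}^{3/2})$, where $g(x)=\cos(\beta_1 x-\pi/4)\int_{-1}^1 e^{-3\beta_1 x u^2}\,\md u$ and $\widetilde g(x)=\int_0^x g$. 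The single key input is that $\widetilde g$ is bounded on $\R^+$ (a Fresnel-type fact), after which summation by parts against the monotone weight $e^{-\beta_0 y_{j,n}^2}$ gives the bound immediately; the Taylor remainder $\varepsilon_{j,n}$ is routine. No separate treatment of the turning-point layer is needed because $g$ and $\widetilde g$ already encode the correct behaviour near $x=0$.

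Your route---Euler--Maclaurin to pass to a continuous integral, then van der Corput and the Airy substitution $u=w/n^{1/6}$---is correct in outline and ultimately rests on the same Fresnel cancellation (your $\int w^{-1/4}\cos(Cw^{3/2})\,\md w$ bounded by Dirichlet's test is exactly the boundedness of $\widetilde g$). The cost is that you must separately control the total variation of $\mathbb{G}^n$ and the integral in two regimes, and match them across the layer $|j-\alpha n|\sim n^{1/3}$, which you yourself flag as the hard part. The paper's Abel summation bypasses all of this: the function $\widetilde g$ absorbs the transition between the turning-point and oscillatory regimes in one stroke, and the discrete integration by parts replaces both your Euler--Maclaurin remainder estimate and your oscillatory integral bounds. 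So your argument works, but the paper's is noticeably shorter and avoids the multi-scale bookkeeping.
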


Let us assume for a moment that the claim of Lemma \ref{lemA4} holds and let us show that the bound \eqref{boundintegral} implies Corollary \ref{coro:3}.
We introduce the Heaviside sequence $\mathbb{H}$ defined by:
$$
\forall \, j \in \Z \, ,\quad \mathbb{H}_j \, := \, \begin{cases}
1 \, ,&\text{\rm if $j \, \ge \, 0$,}\\
0 \, ,&\text{\rm if $j \, < \, 0$,}
\end{cases}
$$
and for any $\ell \in \Z$, the notation $\mathbb{H}(\cdot \, - \, \ell)$ stands for the shift of the sequence $\mathbb{H}$ by an index $\ell$, namely:
$$
\forall \, j \in \Z \, ,\quad  \left( \mathbb{H}(\cdot \, - \, \ell) \right)_j \, := \, \mathbb{H}_{j-\ell} \, .
$$
We use the trick from \cite{ELR} to decompose any sequence $u \in \text{\rm BV}(\Z;\C)$ that satisfies $\lim_{j \to -\infty} u_j=0$ under the form:
$$
u \, = \, \sum_{\ell \in \Z} \, (u_\ell \, - \, u_{\ell-1}) \, \mathbb{H}(\cdot \, - \, \ell) \, ,
$$
which gives:
$$
\forall \, n \in \N^* \, ,\quad L_a^{\, n} \, u \, = \, \sum_{\ell \in \Z} \, (u_\ell \, - \, u_{\ell-1}) \, \Big( L_a^{\, n} \, \mathbb{H}(\cdot \, - \, \ell) \Big) \, .
$$
Since $L_a^{\, n}$ is the Laurent operator associated with the sequence $(\Gjn)_{j \in \Z}$, we have:
$$
\forall \, \ell \in \Z \, ,\quad L_a^{\, n} \, \mathbb{H}(\cdot \, - \, \ell) \, = \, \Big( L_a^{\, n} \, \mathbb{H} \Big) (\cdot \, - \, \ell) \, .
$$
Applying the triangle inequality, we see that Corollary \ref{coro:3} will follow from the uniform bound:
\begin{equation}
\label{boundcoro3}
\sup_{n \in \N} \, \Big\| \, L_a^{\, n} \, \mathbb{H} \, \Big\|_{\ell^{\, \infty}} \, < \, + \infty \, .
\end{equation}
From the expression of the Heaviside sequence, we compute:
$$
\forall \, j \in \Z \, ,\quad \Big( L_a^{\, n} \, \mathbb{H} \Big)_j \, = \, \sum_{\ell \le j} \, \mathcal{G}_\ell^{\, n} \, ,
$$
so Lemma \ref{lemA4} implies the validity of \eqref{boundcoro3} and the claim of Corollary \ref{coro:3} follows. We thus focus from now on on the
proof of Lemma \ref{lemA4}.

\begin{proof}[Proof of Lemma \ref{lemA4}]
Applying Corollary \ref{coro:1}, we see that the bound \eqref{boundintegral} amounts to proving that there exists a constant $C>0$ such that the analogous
bound for the approximate Green's function holds, namely:
\begin{equation}
\label{boundintegral'}
\forall \, n \in \N^* \, ,\quad \forall \, j \, < \, \alpha \, n \, ,\quad \left| \, \sum_{\ell \le j} \, \mathbb{G}_\ell^n \, \right| \, \le \, C \, ,
\end{equation}
where we recall that the expression of $\GGjn$ is given in \eqref{defprincipal}. We keep the notation of the proof of Corollary \ref{coro:2} for the integer $J_n$
and for the sampling points $y_{j,n}$ in \eqref{defyjn}. We also keep the definition \eqref{defbeta01} for the constants $\beta_0$ and $\beta_1$. We then use
a change of variable in the integral of \eqref{defprincipal} to obtain:
\begin{multline}
\label{formule'yjn}
\forall \, n \in \N^* \, ,\quad \forall \, j \le J_n \, ,\quad \GGjn \, = \, \dfrac{3 \, \beta_1}{\pi \, \sqrt{2}} \, \dfrac{1}{n^{\, 1/4}} \, y_{j,n}^{\, 1/2} \,
\exp \left( - \, \beta_0 \, y_{j,n}^{\, 2} \, \right) \, \times \\
\cos \left( \beta_1 \, n^{\, 1/4} \, y_{j,n}^{\, 3/2} \, - \, \dfrac{\pi}{4} \right) \,
\int_{-1}^1 \, \exp \left( \, - \, 3 \, \beta_1 \, n^{\, 1/4} \, y_{j,n}^{\, 3/2} \, u^{\, 2} \right) \, {\rm d}u \, .
\end{multline}

We introduce a function $g$ that is defined a follows:
$$
\forall \, x \, > \, 0 \, ,\quad g(x) \, := \, \cos \left( \beta_1 \, x \, - \, \dfrac{\pi}{4} \right) \, \int_{-1}^1 \, \exp \left( \, - \, 3 \, \beta_1 \, x \, u^{\, 2} \right) \, {\rm d}u \, ,
$$
which puts the formula \eqref{formule'yjn} in the more compact form:
\begin{equation}
\label{formule''yjn}
\forall \, n \in \N^* \, ,\quad \forall \, j \le J_n \, ,\quad \GGjn \, = \, \dfrac{\beta_1 \, \sqrt{2}}{\pi} \, \exp \left( - \, \beta_0 \, y_{j,n}^{\, 2} \, \right) \,
(y_{j,n} \, - \, y_{j+1,n}) \, \dfrac{3}{2} \, n^{\, 1/4} \, y_{j,n}^{\, 1/2} \, g \big( n^{\, 1/4} \, y_{j,n}^{\, 3/2} \big) \, .
\end{equation}
We introduce the primitive function $\tilde{g}$ of $g$:
$$
\forall \, x \, > \, 0 \, ,\quad \widetilde{g}(x) \, := \, \int_0^x \, g(y) \, {\rm d}y \, ,
$$
and we shall use without proof that $\widetilde{g}$ is bounded on $\R^+$ (the proof of this property is left to the reader).

Our goal is to apply an Abel transformation to the series of the $\GGjn$. This is suggested by the form \eqref{formule''yjn} where the factor:
$$
(y_{j,n} \, - \, y_{j+1,n}) \, \dfrac{3}{2} \, n^{\, 1/4} \, y_{j,n}^{\, 1/2} \, g \big( n^{\, 1/4} \, y_{j,n}^{\, 3/2} \big)
$$
arises as some kind of derivative. Based on that goal to achieve, we decompose $\GGjn$ under the form:
$$
\GGjn \, = \, \dfrac{\beta_1 \, \sqrt{2}}{\pi} \, \exp \left( - \, \beta_0 \, y_{j,n}^{\, 2} \, \right) \, \Big( \widetilde{g} \big( n^{\, 1/4} \, y_{j,n}^{\, 3/2} \big)
\, - \, \widetilde{g} \big( n^{\, 1/4} \, y_{j+1,n}^{\, 3/2} \big) \Big) \, - \, \varepsilon_{j,n} \, ,
$$
and the Abel transformation (or discrete integration by parts) directly gives the uniform bound:
$$
\sup_{n \in \N^*} \, \sup_{j \, \le \, J_n} \quad \left| \, \sum_{\ell \le j} \, \dfrac{\beta_1 \, \sqrt{2}}{\pi} \, \exp \left( - \, \beta_0 \, y_{\ell,n}^{\, 2} \, \right)
\, \Big( \widetilde{g} \big( n^{\, 1/4} \, y_{\ell,n}^{\, 3/2} \big) \, - \, \widetilde{g} \big( n^{\, 1/4} \, y_{\ell+1,n}^{\, 3/2} \big) \Big) \, \right| \, < \, + \infty \, .
$$
We thus focus on the bound for the remainder $\varepsilon_{j,n}$, and use Taylor's formula to get:
$$
\Big| \, \varepsilon_{j,n} \, \Big| \, \le \, C \, \left( \dfrac{y_{j,n}}{\sqrt{n}} \, + \, \dfrac{1}{n^{\, 3/4} \, y_{j+1,n}^{\, 1/2}} \right) \,
\exp \left( - \, \beta_0 \, y_{j,n}^{\, 2} \, \right) \, ,
$$
which, after summation, gives the uniform bound:
$$
\sup_{n \in \N^*} \, \sup_{j \, \le \, J_n} \quad \sum_{\ell \le j} \, \big| \, \varepsilon_{\ell,n} \, \big| \, < \, + \infty \, .
$$
This completes the proof of Lemma \ref{lemA4}.
\end{proof}
\end{proof}

\paragraph{Acknowledgements.} It is a pleasure to thank Gr\'egory Faye for most helpful and stimulating discussions on the subject.

\bibliographystyle{alpha}
\bibliography{Source_biblio}
\end{document}